\theoremstyle{definition}
\newtheorem{defin}{Definition}[section]
\theoremstyle{plain}
\newtheorem{teor}[defin]{Theorem}
\newtheorem{lem}[defin]{Lemma}
\newtheorem{pro}[defin]{Proposition}
\newtheorem{cor}[defin]{Corollary}
\theoremstyle{definition}
\newtheorem{esm}[defin]{Example}
\newtheorem{osr}[defin]{Remark}
\renewcommand{\O}{\Omega}
\newcommand{\D}{\mathcal{D}}
\renewcommand{\H}{\mathcal{H}}
\newcommand{\M}{\mathcal{M}}
\newcommand{\B}{\mathcal{B}}
\newcommand{\Ee}{\mathcal{E}}
\newcommand{\Eo}{\mathcal{E}_\Omega}
\newcommand{\rn}{\mathfrak{n}}
\newcommand{\T}{\Theta}
\newcommand{\Up}{\Upsilon}
\renewcommand{\L}{\Lambda}
\newcommand{\Po}{\mathfrak{P}}
\newcommand{\n}[1]{||#1||}
\newcommand{\nor}{||\cdot||}
\newcommand{\noo}{||\cdot||_\O}
\newcommand{\conv}[1]{\xrightarrow{#1}}
\renewcommand{\l}{\langle}
\renewcommand{\r}{\rangle}
\newcommand{\N}{\mathbb{N}}
\newcommand{\R}{\mathbb{R}}
\newcommand{\C}{\mathbb{C}}
\newcommand{\pint}{\l\cdot|\cdot\r}
\newcommand{\pin}[2]{\l#1 | #2\r}
\newcommand{\no}{\noindent}
\newcommand{\ol}{\overline}
\newcommand{\ull}{\underline}
\newcommand{\Ma}{\mathscr{M}_{\ull{\alpha}}}
\newcommand{\Oa}{\O_{\ull{\alpha}}}
\newcommand{\sub}{\subseteq}
\newcommand{\mez}{\frac{1}{2}}
\renewcommand{\ll}{{\it l}}
\fillast \fontsize{12}{15}\scshape}{\thesection.}{0.8em}{}
\begin{document}
	
\thispagestyle{plain}

\vspace*{1cm}

\begin{center}
	\large
	{\bf REPRESENTATION THEOREMS \\ FOR SOLVABLE SESQUILINEAR FORMS} \\
	\vspace*{0.5cm}
	ROSARIO CORSO AND CAMILLO TRAPANI
\end{center}

\normalsize
\vspace*{1cm}

\small

\begin{minipage}{11.8cm}
	{\scshape Abstract.} 	New results are added to the paper \cite{Tp_DB} about q-closed and solvable sesquilinear forms. The structure of the Banach space $\D[\noo]$ defined on the domain $\D$ of a q-closed sesquilinear form $\O$ is unique up to isomorphism, and the adjoint of a sesquilinear form has the same property of q-closure or of solvability. The operator associated to a solvable sesquilinear form is the greatest which represents the form and it is self-adjoint if, and only if, the form is symmetric.\\
	We give more criteria of solvability for q-closed sesquilinear forms. Some of these criteria are related to the numerical range, and we analyse in particular the  forms which are solvable with respect to inner products.\\
	The theory of solvable sesquilinear forms generalises those of many known sesquilinear forms in literature.
\end{minipage}

\vspace*{.5cm}

\begin{minipage}{11.8cm}
	{\scshape Keywords:} Kato's First Representation Theorem, q-closed/solvable sesquilinear forms, compatible norms, Banach-Gelfand triplet.
\end{minipage}

\vspace*{.5cm}

\begin{minipage}{11.8cm}
	{\scshape MSC (2010):} 47A07, 47A30.
	
\end{minipage}

\normalsize

\vspace*{.4cm}

\section{Introduction}
	
If $\O$ is a bounded sesquilinear form defined on a Hilbert space $\H$, with inner product $\pint$ and norm $\nor$, then, as it is well known, then there exists a unique bounded operator $T$ on $\H$ such that
\begin{equation}
\label{rapp_O_lim}
\O(\xi,\eta)=\pin{T\xi}{\eta} \qquad \forall \xi,\eta \in \H.
\end{equation}

\noindent In the unbounded case we are interested in determining a representation of type (\ref{rapp_O_lim}), but we confine ourselves to consider the forms defined on a dense subspace $\D$ of $\H$ and to search a closed operator $T$, with dense domain $D(T)\sub \D$ in $\H$ such that
\begin{equation*}
\label{rapp_O}
\O(\xi,\eta)=\pin{T\xi}{\eta} \qquad \forall \xi\in D(T),\eta \in \D.
\end{equation*}

One of the first results on representation of unbounded sesquilinear forms is Kato's Theorem \cite{Kato} for densely defined closed sectorial forms. The sectoriality is a condition on the numerical range of the form, while the closure indicates that in the domain is defined a Hilbert space, which is continuously embedded in $\H$, and the sesquilinear form is bounded in it.

The hypothesis that the domain $\D$ of a sesquilinear form $\O$ can be turned into a Hilbert space $\D[\noo]$, with the previous property, occurs in other representation theorems, as for instance, in McIntosh's papers \cite{McIntosh68,McIntosh70',McIntosh70}. Precisely, in \cite{McIntosh70',McIntosh70} McIntosh studied sesquilinear forms, called {\it closed}, which, up to a perturbation with scalar multiple of the inner product, are represented in $\D[\noo]$ with bijective operators. Instead, in \cite{McIntosh68} he assumed that the numerical range is contained in the half-plane $\{\lambda\in \C:\Re\lambda \geq 0\}$.

Also Fleige, Hassi and de Snoo assume, in \cite{FHdeS}, indirectly such hypothesis, since they consider a symmetric sesquilinear form which, up to a sum with a real multiple of the inner product, determines a Krein space on the domain of the form, and the topological structure of a Krein space is, by definition, the one of a Hilbert space.
Results that derive from the mentioned hypothesis can be found also in \cite{Schm}.

In \cite{GKMV,Schmitz} representation theorems are formulated for sesquilinear forms of the type
$$
\O(\xi,\eta)=\pin{HA^\mez\xi}{A^\mez\eta} \qquad \forall \xi,\eta \in D(A^\mez),
$$
where $H,A$ are self-adjoint operators satisfying some additional properties. In particular, $H$ is bounded with bounded inverse and $A$ is non-negative.

A different setting is followed by Arendt and ter Elst \cite{Arendt} in the framework of the $j$-{\it elliptic} forms. More precisely, assume that $(V,\nor_V)$ is a Hilbert space, $j:V\to \H$ a bounded operator with dense range in $\H$. $\O$ is a $j$-elliptic sesquilinear form if it is defined and bounded on $V$ and such that
$$
\Re \O(\xi,\xi)+\omega \n{j(\xi)}^2\geq \mu \n{\xi}_V^2 \qquad \forall \xi \in V,
$$
for some $\omega \in \R, \mu> 0$ ($\Re\O$ indicates the real part form of $\O$). Theorem 2.1 of \cite{Arendt} provides a representation
$$
\O(\xi,\eta)=\pin{Tj(\xi)}{j(\eta)} \qquad \forall \xi\in \D',\eta \in \D,
$$
where $T$ is a m-sectorial operator and $\D'$ is a subspace of $\D$.

Di Bella and Trapani in \cite{Tp_DB}, instead, have studied the sesquilinear form $\O$, called {\it q-closed}, on which domain $\D$ is defined a reflexive Banach space $\D[\noo]$, continuously embedded in $\H$, and such that the form is bounded in it. Under this condition, a {\it Banach-Gelfand triplet} $\D \hookrightarrow \H \hookrightarrow \D^\times$ is determined, where $\D^\times$ is the conjugate dual space of $\D[\noo]$.\\
Their representation theorem holds for a {\it solvable} sesquilinear form, i.e. a q-closed form which, up to a perturbation with a bounded form on $\H$, defines a bounded operator, with bounded inverse, acting on the triplet.\\
Moreover, in \cite{Tp_DB} it is shown that densely defined closed sectorial forms are solvable and a criterion of solvability concerning the numerical range of the form is proved.

In this paper we give new results to the approach of \cite{Tp_DB}. In Section \ref{sec:comp_nor}, we analyse the compatible norms with respect to a positive sesquilinear form, which have a key role in the definition of q-closed forms and on the properties of them. In particular, two compatible norms with respect to the same positive sesquilinear form are equivalent if they are defined on the same subspace which is complete with respect to these norms.

In Section \ref{sec:q-cl}, we give the definition of q-closed and q-closable sesquilinear forms and some preliminary properties concerning them. We establish that a densely defined form is q-closable if, and only if, it has a q-closed extension. Moreover, the space $\D[\noo]$ on the domain of a q-closed sesquilinear form with respect to the norm $\noo$ is unique up to isomorphism.

In Section \ref{sec:solv}, we consider solvable sesquilinear forms and the related representation theorem, to which we add new statements. In particular, we prove that the operator associated to a solvable sesquilinear form is the greatest which represents the form and it is self-adjoint if, and only if, the form is symmetric. We also show two examples of q-closed sesquilinear forms which are not solvable.

In Section \ref{sec:cr_solv}, we establish conditions for a q-closed sesquilinear form to be solvable. Some of these conditions are affected by the numerical range of the form and we generalise Theorem 5.11 of \cite{Tp_DB}, considering a bounded form which is not necessary a scalar multiple of the inner product.

In Section \ref{sec:sol_inn}, we focus on solvable sesquilinear forms, defined on $\D$, with respect to an inner product $\pint_\O$, and we find a connection between the associated operator and the operator which represents the form in $\D[\pint_{\O}]$. These results are applied in Section \ref{sec:casi_part}, where we prove that the forms studied in many of the quoted papers are solvable sesquilinear forms. However, these special cases do not exhaust the class of solvable sesquilinear forms.

\section{Compatible norms}
\label{sec:comp_nor}

In this paper, if not otherwise specified, we indicate by $\H$ a Hilbert space, with inner product $\pint$ and norm $\nor$, and by $\D$ a subspace of $\H$. \\

\noindent Let $\O$ be a sesquilinear form defined on $\D$. The {\it adjoint} $\O^*$ of $\O$ is the form on $\D$ given by
$$
\O^*(\xi,\eta)=\ol{\O(\eta,\xi)} \qquad \forall \xi,\eta \in \D.
$$
$\O$ is said to be {\it symmetric} if $\O=\O^*$ and, in particular, $\O$ is {\it positive} if $\O(\xi,\xi)\geq0$ for all $\xi\in\D$.\\
The symmetric forms $\Re \O$ and $\Im \O$, defined by
$$
\Re \O=\frac{1}{2}(\O+\O^*) \qquad \Im \O=\frac{1}{2i}(\O-\O^*),
$$
are called the {\it real part} and the {\it imaginary part} of $\O$, respectively. We have  $\O=\Re\O+i\Im\O$.\\
The {\it numerical range} $\rn_\O$ is the (convex) subset $\{\O(\xi,\xi):\xi\in \D, \n{\xi}=1\}$ of $\C$. We indicate by $N(\O)$ the subspace of $\D$
$$
N(\O):=\{\xi\in \D:\O(\xi,\eta)=0\; \forall \eta \in \D\}.
$$
If $\O$ is a positive sesquilinear form then $
N(\O)=\{\xi\in \D:\O(\xi,\xi)=0\}.	$ \\
We denote by $\iota$ the sesquilinear form which corresponds to the inner product, i.e.  $\iota(\xi,\eta)=\pin{\xi}{\eta}$, with $\xi,\eta \in \H$.\\
Finally, we indicate by $D(T)$ and $Ran(T)$ the domain and the range of an operator $T$ on $\H$, respectively.

\begin{defin}[{\cite[Definition 5.1]{Tp_DB}}]
	\label{norm_comp}
	Let $\T$ be a positive sesquilinear form on $\D$. A norm $\nor_0$ on $\D$ is {\it compatible} with $\T$ if
	\begin{enumerate}
		\item there exists $\alpha>0$ such that $\T(\xi,\xi)\leq \alpha \n{\xi}_0^2$ for all $\xi \in \D$;
		\item if $\{\xi_n\}$ is a sequence on $\D$ such that $\T(\xi_n,\xi_n)\to 0$ and $\n{\xi_n-\xi_m}_0\to 0$, then $\n{\xi_n}_0\to 0$.
	\end{enumerate}
\end{defin}

Note that if $\T$ is a positive sesquilinear form on $\D$ and $\nor_0$ is a compatible norm with $\T$, then $N(\T)=\{0\}$.

Now, let $\T$ be a positive sesquilinear form  on $\D$ such that $N(\T)=\{0\}$, i.e., $\T$ is an inner product on $\D$. To avoid confusion, we denote by $\pint_\T$ the inner product, thus $\pin{\xi}{\eta}_\T=\T(\xi,\eta)$ for all $\xi,\eta \in \D$, while we denote by $\H_\T$ the completion of $\D[\pint_\T]$. Let  $\nor_0$ be a norm on $\D$ such that the operator
\begin{align*}
\mathfrak{I}:\D[\nor_0] &\to \H_\T \\
\xi &\mapsto\;  \xi
\end{align*}
is injective and bounded. Therefore, if $\Ee$ denotes the completion of $\D[\nor_0]$,  $\mathfrak{I}$ extends by continuity to a bounded operator
$\ol{\mathfrak{I}}:\Ee \to \H_\T ,$
with range dense in $\H_\T$. We have the following fact.

\begin{teor}
	\label{pro_equiv_comp}
	$\ol{\mathfrak{I}}$  is injective if, and only if, $\nor_0$ is compatible with $\T$.
\end{teor}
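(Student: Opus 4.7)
The plan is to handle the two implications separately, in each case exploiting the fact that $\D$ is dense in $\Ee$, that $\ol{\mathfrak{I}}$ is continuous, and the identity $\T(\xi,\xi)=\n{\xi}_\T^2$ for $\xi\in\D$. No serious obstacle is expected: the argument is really just a careful bookkeeping of sequences living simultaneously in $\D$, in $\Ee$ and in $\H_\T$.

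For the implication ``$\ol{\mathfrak{I}}$ injective $\Rightarrow$ compatibility'', I would start from a sequence $\{\xi_n\}\subset \D$ satisfying the hypotheses of item 2 of Definition \ref{norm_comp}: $\T(\xi_n,\xi_n)\to 0$ and $\n{\xi_n-\xi_m}_0\to 0$. The second condition says $\{\xi_n\}$ is Cauchy in $\D[\nor_0]$, hence convergent to some $x\in \Ee$. By continuity of $\ol{\mathfrak{I}}$ one has $\ol{\mathfrak{I}}(\xi_n)\to \ol{\mathfrak{I}}(x)$ in $\H_\T$; on the other hand $\n{\xi_n}_\T^2=\T(\xi_n,\xi_n)\to 0$, so $\ol{\mathfrak{I}}(\xi_n)=\xi_n\to 0$ in $\H_\T$. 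Uniqueness of the limit gives $\ol{\mathfrak{I}}(x)=0$, injectivity forces $x=0$, and therefore $\n{\xi_n}_0\to 0$, which is what item 2 requires. Condition 1 is automatic from the boundedness of $\mathfrak{I}$, which is part of the standing assumption.

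For the converse, assume $\nor_0$ is compatible with $\T$ and take $x\in \Ee$ with $\ol{\mathfrak{I}}(x)=0$. By density of $\D$ in $\Ee$, pick $\{\xi_n\}\subset \D$ with $\xi_n\to x$ in $\Ee$, so in particular $\n{\xi_n-\xi_m}_0\to 0$. Continuity of $\ol{\mathfrak{I}}$ yields $\xi_n=\ol{\mathfrak{I}}(\xi_n)\to \ol{\mathfrak{I}}(x)=0$ in $\H_\T$, i.e.\ $\T(\xi_n,\xi_n)=\n{\xi_n}_\T^2\to 0$. Item 2 of the compatibility definition now applies, giving $\n{\xi_n}_0\to 0$, hence $x=0$.

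The only point that needs to be checked with a bit of care is the identification used in writing $\ol{\mathfrak{I}}(\xi_n)=\xi_n$: on $\D$ the operator $\mathfrak{I}$ is the set-theoretic identity, so no distinction is needed, but one should make sure that the completions $\Ee$ and $\H_\T$ are treated as abstract spaces and that all limits are taken in the correct topology. Beyond this minor pedantry, no computation or extra hypothesis seems necessary.
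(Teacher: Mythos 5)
Your proof is correct and follows essentially the same route as the paper's: in both directions you pass to a Cauchy sequence in $\D[\nor_0]$, use continuity of $\ol{\mathfrak{I}}$ and the identity $\T(\xi_n,\xi_n)=\n{\xi_n}_\T^2$ to transfer the limit between $\Ee$ and $\H_\T$, and then invoke injectivity (resp.\ item 2 of compatibility) exactly as the authors do. The remark that condition 1 of Definition \ref{norm_comp} is automatic from the standing boundedness assumption on $\mathfrak{I}$ is also implicit in the paper, so nothing further is needed.
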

\begin{proof}
	$(\Rightarrow)$
	We only have to prove the condition {\it 2} of Definition  \ref{norm_comp}. Let $\{\xi_n\}$ be a sequence on $\D$ such that $\T(\xi_n,\xi_n)\to 0$ and $\n{\xi_n-\xi_m}_0 \to 0$, thus $\{\xi_n\}$ is a Cauchy sequence on $\D[\nor_0]$, which converges therefore to an element $\xi\in \Ee$. We have
	$
	\ol{\mathfrak{I}} \xi=\ol{\mathfrak{I}} \lim_{n \to \infty} \xi_n = \lim_{n\to \infty} \mathfrak{I} \xi_n=\lim_{n\to \infty}  \xi_n = 0,
	$
	since $\n{\xi_n}_\T=\T(\xi_n,\xi_n)\to 0$.	By the hypothesis,  $\ol{\mathfrak{I}}$ is injective, then $\xi=0$; i.e., $\n{\xi_n}_0\to 0$.\\
	$(\Leftarrow)$
	Suppose that $\ol{\mathfrak{I}}\xi=0$, with $\xi \in \Ee$. Hence, by definition of $\ol{\mathfrak{I}}$, there exists a sequence $\{\xi_n\}$ on $\D$ such that $\n{\xi_n-\xi}_0 \to 0$, and $\mathfrak{I}\xi_n\to 0$; hence, $\T(\xi_n,\xi_n)=\pin{\mathfrak{I}\xi_n}{\mathfrak{I}\xi_n}_\T\to 0$.\\
	But $\nor_0$ is a compatible norm with $\T$, thus $\n{\xi_n}_0\to 0$; that is, $\xi=0$.
\end{proof}

\begin{cor}
	\label{cor_comp}
	Let $\T$ be a positive sesquilinear form on $\D$ with $N(\T)=\{0\}$ and let $\nor_0$ be a norm on $\D$, such that $\T(\xi,\xi)\leq \alpha\n{\xi}_0^2$ for all $\xi \in \D$, and some constant $\alpha>0$. If $\D[\nor_0]$ is complete, then $\nor_0$ is compatible with $\T$.
\end{cor}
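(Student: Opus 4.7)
The plan is to obtain this as a direct application of Theorem \ref{pro_equiv_comp}. To invoke that theorem I first need to check that the hypotheses on the embedding $\mathfrak{I}:\D[\nor_0]\to \H_\T$ of the setup preceding the theorem are satisfied, namely that $\mathfrak{I}$ is injective and bounded.

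Boundedness is immediate from the assumed inequality: $\n{\mathfrak{I}\xi}_\T^2=\T(\xi,\xi)\leq \alpha\n{\xi}_0^2$ for every $\xi\in\D$. Injectivity follows from $N(\T)=\{0\}$: if $\mathfrak{I}\xi=0$ in $\H_\T$ then $\T(\xi,\xi)=\n{\xi}_\T^2=0$, hence $\xi\in N(\T)=\{0\}$. So the setup of Theorem \ref{pro_equiv_comp} is in force.

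Now I use completeness. Since $\D[\nor_0]$ is complete by hypothesis, the completion $\Ee$ of $\D[\nor_0]$ is canonically identified with $\D[\nor_0]$ itself, and under this identification the continuous extension $\ol{\mathfrak{I}}:\Ee\to\H_\T$ coincides with $\mathfrak{I}$. Since $\mathfrak{I}$ is injective by the previous paragraph, $\ol{\mathfrak{I}}$ is injective, and Theorem \ref{pro_equiv_comp} yields that $\nor_0$ is compatible with $\T$.

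There is essentially no obstacle here; the only mildly delicate point is the identification $\Ee\simeq \D[\nor_0]$ that allows one to transfer injectivity of $\mathfrak{I}$ into injectivity of $\ol{\mathfrak{I}}$, and this is justified by the completeness hypothesis together with the density of $\D$ in $\Ee$ by construction.
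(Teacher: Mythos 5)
Your proof is correct and follows exactly the route the paper intends: the corollary is stated without proof precisely because, once one checks that $\mathfrak{I}$ is bounded (from the inequality) and injective (from $N(\T)=\{0\}$), completeness of $\D[\nor_0]$ identifies $\Ee$ with $\D[\nor_0]$ and $\ol{\mathfrak{I}}$ with $\mathfrak{I}$, so Theorem \ref{pro_equiv_comp} applies immediately. Nothing to add.
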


Now we prove a lemma similar to the statement of \cite[Exercise 5.10]{Weid}, in which only norms induced by inner products are considered.

\begin{lem}
	Let $E$ be a complex vector space which is a Banach space with respect to two norms $\nor_1$ and $\nor_2$. Suppose that the following conditions hold:
	\begin{enumerate}
		\item if $\{\xi_n\}$ is a sequence on $E$ such that $\n{\xi_n}_1\to 0$ and $\n{\xi_n-\xi_m}_2\to 0$, then $\n{\xi_n}_2\to 0$;
		\item if $\{\xi_n\}$ is a sequence on $E$ such that  $\n{\xi_n}_2\to 0$ and $\n{\xi_n-\xi_m}_1\to 0$, then $\n{\xi_n}_1\to 0$.
	\end{enumerate}
	Then, the norms $\nor_1$ and $\nor_2$ are equivalent.
\end{lem}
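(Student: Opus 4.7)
The plan is to apply the Closed Graph Theorem to the identity map $\mathrm{id}: (E, \nor_1) \to (E, \nor_2)$, and then invoke symmetry.

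First I would verify that the graph of $\mathrm{id}: (E, \nor_1) \to (E, \nor_2)$ is closed. Suppose $\{\xi_n\} \subset E$ satisfies $\n{\xi_n - \xi}_1 \to 0$ and $\n{\xi_n - \eta}_2 \to 0$ for some $\xi, \eta \in E$; the goal is to show $\xi = \eta$. Setting $\zeta_n := \xi_n - \xi$, one has $\n{\zeta_n}_1 \to 0$, while $\{\zeta_n\}$ is Cauchy in $\nor_2$ (since $\{\xi_n\}$ converges, hence is Cauchy, with respect to $\nor_2$). Condition \emph{1} then gives $\n{\zeta_n}_2 \to 0$, that is, $\xi_n \to \xi$ in $\nor_2$. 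By uniqueness of limits in the normed space $(E, \nor_2)$, we conclude $\xi = \eta$, so the graph is closed.

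Since $(E, \nor_1)$ and $(E, \nor_2)$ are Banach spaces, the Closed Graph Theorem yields a constant $C_1 > 0$ such that $\n{\xi}_2 \leq C_1 \n{\xi}_1$ for every $\xi \in E$. Reversing the roles of $\nor_1$ and $\nor_2$ and applying the same argument using condition \emph{2} in place of condition \emph{1}, one obtains a constant $C_2 > 0$ with $\n{\xi}_1 \leq C_2 \n{\xi}_2$ for every $\xi \in E$. Combining the two inequalities gives the desired equivalence.

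I do not expect any substantial obstacle here: both hypotheses are tailored precisely to make the closed-graph verification work, and the completeness assumption on both norms is exactly what is needed to invoke the Closed Graph Theorem. The only point that requires a small amount of care is keeping track of which sequence is Cauchy in which norm, so that conditions \emph{1} and \emph{2} are applied to the correct pair.
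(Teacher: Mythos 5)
Your proof is correct and follows essentially the same route as the paper: the paper likewise applies the Closed Graph Theorem to the identity map in both directions, using condition \emph{1} (resp.\ \emph{2}) to show the identity (resp.\ its inverse) is closed. You merely spell out the closed-graph verification that the paper leaves implicit.
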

\begin{proof}
	We consider the identity $I:E[\nor_1]\to E[\nor_2].$
	$I$ is a closable linear operator, and hence closed, by the  first condition. Conversely, $I^{-1}$  is closable, hence closed,  by the  second condition.
	By the Closed Graph Theorem, $I$ and $I^{-1}$ are two bounded operators, and hence the norms are equivalent.
\end{proof}

The following theorem establishes a condition under which two compatible norms are equivalent.

\begin{teor}
	\label{th_equiv_norm_comp}
	Let $\T$, $\n{\cdot}$ and $\n{\cdot}'$ be a positive sesquilinear form on $\D$, and two norms on $\D$ compatible with $\T$, respectively, and such that $\D[\nor]$ and $\D[\n{\cdot}']$ are Banach spaces. Then the norms $\n{\cdot}$ and $\n{\cdot}'$ are equivalent.
\end{teor}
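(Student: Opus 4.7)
The strategy is to reduce the claim to the preceding lemma by verifying its two hypotheses for the pair $(\nor,\nor')$ on the vector space $E=\D$. Since both $\D[\nor]$ and $\D[\nor']$ are by hypothesis Banach spaces, the setting of the lemma is already in place; what remains is to check the two mixed-convergence conditions, and this is exactly where the compatibility with $\T$ enters.

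Concretely, suppose $\{\xi_n\}\sub \D$ satisfies $\n{\xi_n}\to 0$ and $\n{\xi_n-\xi_m}'\to 0$. From condition \emph{1} of Definition \ref{norm_comp} applied to $\nor$, one gets $\T(\xi_n,\xi_n)\leq \alpha \n{\xi_n}^2\to 0$. Now condition \emph{2} of Definition \ref{norm_comp} applied to $\nor'$ (using $\T(\xi_n,\xi_n)\to 0$ and $\n{\xi_n-\xi_m}'\to 0$) yields $\n{\xi_n}'\to 0$. By symmetry, swapping the roles of $\nor$ and $\nor'$ gives the second hypothesis of the lemma.

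Once both mixing conditions are established, the lemma applies directly and delivers the equivalence of $\nor$ and $\nor'$. I do not anticipate any serious obstacle here: the proof is essentially a bookkeeping exercise in which the two items of the compatibility definition are used in a crosswise manner, with $\T$ acting as the common "bridge" between the two norm topologies, and the Banach hypothesis is needed only to invoke the lemma (which in turn relies on the Closed Graph Theorem applied to the identity $I:\D[\nor]\to \D[\nor']$). The only point that deserves a brief justification is that $N(\T)=\{0\}$, which is guaranteed by the remark following Definition \ref{norm_comp}, so that $\T$ genuinely induces an inner product and the compatibility definition can be invoked without ambiguity.
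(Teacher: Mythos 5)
Your argument is correct and follows essentially the same route as the paper: both verify the two crossed convergence conditions of the preceding lemma by using condition \emph{1} of compatibility for one norm to get $\T(\xi_n,\xi_n)\to 0$ and condition \emph{2} for the other norm to conclude, then invoke the lemma. The remark about $N(\T)=\{0\}$ is a harmless extra observation that the paper leaves implicit.
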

\begin{proof}
	We prove, using the previous lemma, that $\n{\cdot}$ and $\n{\cdot}'$ are equivalent.\\
	If $\{\xi_n\}$ is a sequence in $\D$ such that $\n{\xi_n}\to 0$ and $\n{\xi_n-\xi_m}'\to 0$, then, by the compatibility of $\n{\cdot}$ with $\T$,
	$	\T(\xi_n,\xi_n)\leq\alpha \n{\xi_n}^2\to 0	$,
	and by the compatibility of $\n{\cdot}'$ with $\T$, we have $\n{\xi_n}'\to 0$.\\
	By a symmetry argument, it is also true that if $\{\xi_n\}$ is a sequence in $\D$ such that $\n{\xi_n}'\to 0$ and $\n{\xi_n-\xi_m}\to 0$, then $\n{\xi_n}\to 0$.
\end{proof}

\begin{cor}
	Let $\T$, $\n{\cdot}$ and $\n{\cdot}'$ be a positive sesquilinear form on $\D$ with $N(\T)=\{0\}$, and two norms on $\D$, respectively. If $\D[\nor]$ and $\D[\nor']$ are Banach spaces, and there exist  $\alpha,\alpha'>0$ such that $\T(\xi,\xi)\leq \alpha \n{\xi}^2$ and $\T(\xi,\xi)\leq \alpha'\n{\xi}'^2$ for all $\xi\in \D$, then the norms $\n{\cdot}$ and $\n{\cdot}'$ are equivalent.
\end{cor}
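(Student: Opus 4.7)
The plan is to recognise that this corollary is an immediate combination of the two preceding results: Corollary \ref{cor_comp} upgrades a mere domination hypothesis $\T(\xi,\xi)\leq \alpha\n{\xi}_0^2$ into full compatibility whenever $\D[\nor_0]$ is complete, and Theorem \ref{th_equiv_norm_comp} then says that any two compatible norms making $\D$ a Banach space must be equivalent. So the strategy is simply to apply the corollary twice to obtain compatibility of both norms, and then invoke the theorem.

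More concretely, I would first observe that, by hypothesis, $\T$ is positive on $\D$ with $N(\T)=\{0\}$, $\D[\nor]$ is a Banach space, and $\T(\xi,\xi)\leq \alpha\n{\xi}^2$ for all $\xi\in\D$. These are exactly the assumptions of Corollary \ref{cor_comp} applied with $\nor_0=\nor$, hence $\nor$ is compatible with $\T$. By a perfectly symmetric application of Corollary \ref{cor_comp} with $\nor_0=\nor'$, using the completeness of $\D[\nor']$ and the inequality $\T(\xi,\xi)\leq \alpha'\n{\xi}'^2$, we get that $\nor'$ is also compatible with $\T$.

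At this point the hypotheses of Theorem \ref{th_equiv_norm_comp} are fulfilled: $\T$ is a positive sesquilinear form on $\D$, and $\nor$, $\nor'$ are two norms compatible with $\T$ such that $\D[\nor]$ and $\D[\nor']$ are Banach spaces. The theorem therefore yields the equivalence of $\nor$ and $\nor'$, completing the proof.

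There is no real obstacle here; the corollary is purely a packaging of the previous two results into the statement most convenient for applications, trading the (sometimes hard to check) compatibility condition for the easier pair of hypotheses \emph{completeness of the normed spaces} and \emph{domination of $\T$ by the square of each norm}. The only thing worth double-checking is that Corollary \ref{cor_comp} really does require exactly these ingredients, which it does.
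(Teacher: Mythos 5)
Your proof is correct and follows exactly the route the paper intends (the corollary is stated without proof there): apply Corollary \ref{cor_comp} to each norm to obtain compatibility with $\T$, then conclude by Theorem \ref{th_equiv_norm_comp}. Nothing is missing.
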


\section{Q-closed sesquilinear forms}
\label{sec:q-cl}

\no In this section, and for all the rest of the paper, if not otherwise specified, we assume that the subspace $\D$ is dense in  $\H$.

\begin{defin}[{\cite[Definition 5.2]{Tp_DB}}]
	\label{def_q_chiusa}
	Let $\noo$ be a norm on $\D$. A sesquilinear form $\O$ on $\D$ is called {\it  q-closable with respect to}  $\noo$ if $\noo$ is compatible with the inner product $\pint$, i.e.
	\begin{enumerate}
		\item there exists $\alpha>0$ such that $\n{\xi}\leq \alpha \n{\xi}_\O$ for all $\xi \in \D$, i.e. the embedding  $\D[\noo]\to \H$ is continuous;
		\item if $\{\xi_n\}$ is a sequence in $\D$ such that $\n{\xi_n}\to 0$ and $\n{\xi_n-\xi_m}_\O\to 0$, then $\n{\xi_n}_\O\to 0$;
	\end{enumerate}
	and also the following conditions hold
	\begin{enumerate}
		\item[3.] the completion $\Eo$ of $\D[\noo]$ is a reflexive Banach space;
		\item[4.] there exists $\beta >0$ such that $|\O(\xi,\eta)|\leq \beta\n{\xi}_\O\n{\eta}_\O$ for all  $\xi,\eta \in \D$, i.e. $\O$ is bounded on $\D[\noo]$.
	\end{enumerate}
	$\O$ is called {\it q-closed with respect to}  $\noo$ if $\D[\noo]$ is a reflexive Banach space.
\end{defin}

Actually, using Corollary \ref{cor_comp} we see that in the definition of q-closed sesquilinear form the hypothesis {\it 2} is superfluous. Therefore, we can formulate the following proposition.

\begin{pro}
	Let $\noo$ be a norm on $\D$. A sesquilinear form $\O$ on $\D$ is  q-closed with respect to $\noo$ if, and only if, the following statement hold
	\begin{enumerate}
		\item there exists $\alpha>0$ such that $\n{\xi}\leq \alpha \n{\xi}_\O$ for all $\xi \in \D$;
		\item $\D[\noo]$ is a reflexive Banach space;
		\item there exists $\beta >0$ such that $|\O(\xi,\eta)|\leq \beta\n{\xi}_\O\n{\eta}_\O$ for all  $\xi,\eta \in \D$.
	\end{enumerate}
\end{pro}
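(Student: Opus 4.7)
The plan is to derive the proposition directly from Corollary \ref{cor_comp}, the only nontrivial content being that the compatibility condition (condition \textit{2} of Definition \ref{def_q_chiusa}) is automatic once $\D[\noo]$ is known to be a reflexive Banach space.

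For the implication $(\Rightarrow)$, there is nothing to prove: a q-closed form satisfies by Definition \ref{def_q_chiusa} all four conditions of q-closability together with the additional requirement that $\D[\noo]$ is a reflexive Banach space, which immediately gives the three conditions in the statement.

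For the converse $(\Leftarrow)$, I assume the three listed conditions and need to recover conditions \textit{1}--\textit{4} of Definition \ref{def_q_chiusa}, which together with $\D[\noo]$ being reflexive Banach yield q-closure. Conditions \textit{1}, \textit{3}, \textit{4} are immediate from the three assumptions (in particular the reflexive Banach assumption on $\D[\noo]$ forces its completion $\Eo$ to coincide with it, hence to be reflexive Banach). The only thing to check is condition \textit{2}, i.e.\ that $\noo$ is compatible with the inner-product form $\iota$ on $\D$. Here I apply Corollary \ref{cor_comp} with $\T = \iota$ and $\nor_0 = \noo$: the form $\iota$ is positive with $N(\iota) = \{0\}$ on $\D\subseteq \H$, the bound $\iota(\xi,\xi) = \n{\xi}^2 \leq \alpha^2 \n{\xi}_\O^2$ is supplied by assumption \textit{1}, and $\D[\noo]$ is complete by assumption \textit{2}. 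Corollary \ref{cor_comp} then yields that $\noo$ is compatible with $\iota$, which is exactly what condition \textit{2} of Definition \ref{def_q_chiusa} asserts.

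There is no substantial obstacle here; the proof is essentially a bookkeeping remark pointing out that Corollary \ref{cor_comp} makes the compatibility clause redundant whenever the normed space is already complete. The only care needed is the observation that reflexivity of $\D[\noo]$ is used solely to match the last line of Definition \ref{def_q_chiusa} and plays no role in the redundancy argument itself.
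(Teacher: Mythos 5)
Your proof is correct and follows exactly the route the paper intends: the paper itself only remarks that Corollary \ref{cor_comp} (applied with $\T=\iota$ and $\nor_0=\noo$) renders condition \emph{2} of Definition \ref{def_q_chiusa} redundant once $\D[\noo]$ is complete, which is precisely your argument. Nothing is missing.
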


We show some examples of q-closed or q-closable forms.

\begin{esm}
	A densely defined closed (closable) sectorial form $\O$ on $\D$, with vertex $\gamma\in \R$, is q-closed (q-closable) with respect to the norm $\noo$ defined by
	$	\n{\xi}_\O=(\Re \O(\xi,\xi)+(1-\gamma)\n{\xi}^2)^\mez$, for all $\xi\in \D$ (see \cite[Ch. VI, Theorem 1.11]{Kato}).
\end{esm}

\begin{esm}
	\label{esm_O_T_(2)}
	Let $T$ be a closed operator with domain $\D$, and let $\O_T$ be the sesquilinear form on $\D$ given by
	$\O_T(\xi,\eta)=\pin{T\xi}{\eta}$ for all $\xi,\eta \in \D$.\\
	$\O_T$ is q-closed with respect to the graph norm of $T$,
	$	\n{\xi}_T=(\n{\xi}^2+\n{T\xi}^2)^\mez$ for all $\xi \in \D$.
\end{esm}

\begin{esm}
	\label{esm_for_L2C}
	Let $r:\C\to \C$ be a measurable function and $\O$ the sesquilinear form with domain
	$$
	\D:=\left \{f\in L^2(\C): \int_{\C}|r(x)||f(x)|^2dx< \infty \right\}
	$$
	and given by
	$	\displaystyle
	\O(f,g)=\int_\C r(x)f(x)\ol{g(x)}dx$ for all $\xi,\eta\in \D$.\\
	$\O$ is q-closed with respect the norm
	$$ \n{f}_\O=\left (\int_{\C}(1+|r(x)|)|f(x)|^2dx\right )^{\frac{1}{2}} \qquad \forall f\in \D.$$
\end{esm}

Definition \ref{def_q_chiusa} of q-closable sesquilinear form, actually, is equivalent to request that the form has a q-closed extension, as affirmed in the following proposition.

\begin{pro}
	\label{pro_defi_q_chius}
	A densely defined sesquilinear form is q-closable if, and only if, it admits a q-closed extension.
\end{pro}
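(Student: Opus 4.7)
The plan is to treat the two implications separately. The backward direction is essentially a restriction argument, while the forward direction requires actually constructing the extension by means of the completion $\Eo$ of $\D[\noo]$, realised inside $\H$ thanks to the injection guaranteed by Theorem \ref{pro_equiv_comp}.

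For the implication $(\Leftarrow)$, let $\tilde{\O}$ be a q-closed extension of $\O$ on a domain $\tilde{\D}\supseteq \D$ with norm $\nor_{\tilde{\O}}$, and set $\noo$ to be the restriction of $\nor_{\tilde{\O}}$ to $\D$. I would check the four conditions of Definition \ref{def_q_chiusa}. Continuity of the embedding $\D[\noo]\to \H$ and the boundedness of $\O$ on $\D[\noo]$ are immediate from the corresponding properties of $\tilde{\O}$. The compatibility condition {\it 2} transfers from $\tilde{\D}$ to $\D$ because any test sequence in $\D$ is also a test sequence in $\tilde{\D}$, and $\nor_{\tilde{\O}}$ is compatible with $\iota$ on $\tilde{\D}$ by Corollary \ref{cor_comp}. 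Finally, the completion of $\D[\noo]$ is isometrically isomorphic to the $\nor_{\tilde{\O}}$-closure of $\D$ inside $\tilde{\D}[\nor_{\tilde{\O}}]$, and a closed subspace of a reflexive Banach space is reflexive.

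For the implication $(\Rightarrow)$, suppose $\O$ is q-closable with respect to $\noo$ and let $\Eo$ be the completion of $\D[\noo]$, with the canonical continuous extension $\ol{\mathfrak{I}}:\Eo\to \H$ of the identity. Since $\noo$ is compatible with the inner product $\iota$ (and $N(\iota)=\{0\}$ holds automatically because $\D$ is dense in $\H$), Theorem \ref{pro_equiv_comp} gives that $\ol{\mathfrak{I}}$ is injective. I would then identify $\Eo$ with its image $\tilde{\D}:=\ol{\mathfrak{I}}(\Eo)\subseteq \H$ via $\ol{\mathfrak{I}}$, transport the norm to obtain a norm $\nor_{\tilde{\O}}$ on $\tilde{\D}$, and define $\tilde{\O}$ on $\tilde{\D}$ by extending $\O$ continuously from $\D$ to $\Eo$ and reading the result through the identification. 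Then $\tilde{\D}[\nor_{\tilde{\O}}]$ is a reflexive Banach space continuously embedded in $\H$, $\tilde{\O}$ is bounded on it, $\tilde{\D}\supseteq \D$ is dense in $\H$, and $\tilde{\O}$ restricts to $\O$ on $\D$; hence $\tilde{\O}$ is a q-closed extension of $\O$.

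The main obstacle is the forward direction: without the injectivity of $\ol{\mathfrak{I}}$, the completion $\Eo$ could not be realised as a subspace of $\H$ and the candidate extension $\tilde{\O}$ would not be well defined as a sesquilinear form on a subspace of $\H$. Theorem \ref{pro_equiv_comp} is exactly the tool that removes this obstacle, after which the verification of the q-closure conditions for $\tilde{\O}$ becomes routine.
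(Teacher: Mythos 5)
Your proof is correct and follows essentially the same route as the paper: the backward direction is the same restriction-of-the-norm argument (you spell out the verification of reflexivity and compatibility that the paper dismisses with ``clearly''), and for the forward direction you supply the standard completion construction --- realising $\Eo$ inside $\H$ via the injectivity of $\ol{\mathfrak{I}}$ from Theorem \ref{pro_equiv_comp} and extending $\O$ by continuity --- which the paper simply delegates to \cite[Proposition 5.3]{Tp_DB}. The only tiny slip is your parenthetical reason for $N(\iota)=\{0\}$: this holds because $\pint$ is positive definite, not because $\D$ is dense in $\H$.
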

\begin{proof}
	$(\Rightarrow)$ This implication is given by \cite[Proposition 5.3]{Tp_DB}. \\
	$(\Leftarrow)$ Let $\O$ be a sesquilinear form on a dense domain  $\D$ in $\H$, and let $\O'$ a q-closed extension, with domain $\D'$. By the hypothesis, there exists a norm $\nor_{\O '}$ on $\D '$, compatible with the inner product $\pint$, such that $\Ee_{\O '} =\D' [\nor_{\O '}]$ is a reflexive Banach space and $\O' $ is  bounded in such space. \\
	Since $\D \subseteq \D '$, we can consider the norm $\noo$ on $\D$, induced by $\nor_{\O '}$ on $\D$. Clearly, $\O$ is q-closable with respect to $\noo$.
\end{proof}

With the aid of the Closed Graph Theorem one can prove the boundedness of a q-closable sesquilinear form defined in the whole space.

\begin{pro}
	Let $\O$ be a q-closable sesquilinear form on $\D$ with respect to $\noo$. If $\D=\H$ then $\O$ is bounded in $\H$.
\end{pro}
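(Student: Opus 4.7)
The plan is to use the Closed Graph Theorem to force the two norms $\nor$ and $\noo$ to be equivalent on $\H=\D$, after which the bound on $\O$ transfers automatically. Let $\mathfrak{I}:\D[\noo]\to \H$ be the natural inclusion and $\overline{\mathfrak{I}}:\Eo\to\H$ its bounded extension furnished by Definition \ref{def_q_chiusa}. Since $\noo$ is compatible with $\pint$, Theorem \ref{pro_equiv_comp} tells us that $\overline{\mathfrak{I}}$ is injective; in particular we may regard every $\xi\in\D$ unambiguously as an element of $\Eo$.

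Consider the map $i:\H\to\Eo$ defined, using the hypothesis $\D=\H$, by $i(\xi)=\xi$ (with $\xi$ now viewed inside the completion $\Eo$). The first step is to check that $i$ has closed graph. Suppose $\n{\xi_n-\xi}\to 0$ in $\H$ and $\n{i(\xi_n)-z}_\O\to 0$ in $\Eo$. Applying the continuous operator $\overline{\mathfrak{I}}$ to the second convergence yields $\overline{\mathfrak{I}}(i(\xi_n))\to \overline{\mathfrak{I}}(z)$ in $\H$. But $\overline{\mathfrak{I}}(i(\xi_n))=\xi_n\to \xi$, so $\overline{\mathfrak{I}}(z)=\xi=\overline{\mathfrak{I}}(i(\xi))$, and injectivity of $\overline{\mathfrak{I}}$ gives $z=i(\xi)$.

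Since $\H$ is a Banach space and $\Eo$ is a reflexive (hence Banach) space, the Closed Graph Theorem applies and $i$ is bounded: there is $C>0$ with $\n{\xi}_\O\le C\n{\xi}$ for every $\xi\in\H$. Combined with condition 1 of Definition \ref{def_q_chiusa}, which gives $\n{\xi}\le \alpha\n{\xi}_\O$, the norms $\nor$ and $\noo$ are equivalent on $\D=\H$. Consequently, the boundedness of $\O$ on $\D[\noo]$ (condition 4) produces $|\O(\xi,\eta)|\le \beta\n{\xi}_\O\n{\eta}_\O\le \beta C^2\n{\xi}\n{\eta}$ for all $\xi,\eta\in\H$, so $\O$ is bounded on $\H$.

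The only delicate point is the bookkeeping identifying $\xi\in\D$ with its image in the abstract completion $\Eo$; once that is fixed the graph-closedness argument is short, because the injectivity of $\overline{\mathfrak{I}}$ is exactly what is needed to pin down limits. No further machinery beyond the compatibility criterion of Theorem \ref{pro_equiv_comp} and the Closed Graph Theorem is used.
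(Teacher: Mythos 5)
Your argument is correct and follows exactly the route the paper indicates (it only remarks that the result is proved ``with the aid of the Closed Graph Theorem'' and leaves the details out): you pass to the completion $\Eo$, use the injectivity of $\overline{\mathfrak{I}}$ guaranteed by the compatibility of $\noo$ with $\pint$ to show the identity $\H\to\Eo$ has closed graph, and conclude that the two norms are equivalent so that the bound on $\O$ transfers to $\nor$. The bookkeeping with the completion is handled properly, so there is nothing to add.
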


The next result shows that, even though Definition \ref{def_q_chiusa} depends on a norm, this norm is uniquely determined up to an equivalence.

\begin{teor}
	\label{th_equiv_norm_q-chius}
	There exists at most one norm (up to equivalence) on $\D$, such that a sesquilinear form on $\D$ is q-closed with respect to it.
\end{teor}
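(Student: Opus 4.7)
The plan is to reduce this to Theorem \ref{th_equiv_norm_comp}, using the inner product (viewed as a positive sesquilinear form on $\D$) as the common anchor form with respect to which both candidate norms turn out to be compatible.

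Suppose $\O$ is q-closed on $\D$ with respect to two norms $\nor_1$ and $\nor_2$. By the Proposition immediately preceding the theorem, for each $i\in\{1,2\}$ the space $\D[\nor_i]$ is a reflexive Banach space and there exists $\alpha_i>0$ such that $\n{\xi}\leq \alpha_i\n{\xi}_i$ for all $\xi\in\D$ (condition 4 on the boundedness of $\O$ will not be needed). I would then consider the form $\iota$ on $\D$, i.e.\ the restriction to $\D$ of the inner product of $\H$. This is a positive sesquilinear form and, since $\pint$ is non-degenerate on $\H\supseteq\D$, one has $N(\iota)=\{0\}$.

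Next, I would invoke Corollary \ref{cor_comp} twice: since $\iota(\xi,\xi)=\n{\xi}^2\leq \alpha_i^2\n{\xi}_i^2$ and $\D[\nor_i]$ is a Banach space, $\nor_i$ is compatible with $\iota$, for $i=1,2$. At this point both $\nor_1$ and $\nor_2$ are compatible norms with respect to the same positive form $\iota$, and both make $\D$ a Banach space; Theorem \ref{th_equiv_norm_comp} then yields directly that $\nor_1$ and $\nor_2$ are equivalent.

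There is no real obstacle here beyond recognizing that the correct ``background'' positive form to feed into the compatibility machinery is simply $\iota$: condition 1 of Definition \ref{def_q_chiusa} says precisely that $\nor_\O$ dominates $\iota^{1/2}$, and condition 3 (reflexivity, hence completeness) is exactly what Corollary \ref{cor_comp} needs in order to upgrade the domination to full compatibility. The form $\O$ itself plays no role in the argument, which is why the uniqueness result holds at the level of the norm alone.
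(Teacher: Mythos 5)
Your proof is correct and follows essentially the same route as the paper: both arguments reduce to Theorem \ref{th_equiv_norm_comp} by observing that the two norms are compatible with the inner product $\iota$ and make $\D$ complete. The only cosmetic difference is that you re-derive the compatibility via Corollary \ref{cor_comp}, whereas the paper reads it off directly from conditions 1 and 2 of Definition \ref{def_q_chiusa}; these amount to the same thing.
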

\begin{proof}
	Let $\O$ be a sesquilinear form on $\D$, which is q-closed with respect to two norms $\n{\cdot}_\O$ and $\n{\cdot}_\O'$. Then we have, in particular, that $\n{\cdot}_\O$ and $\n{\cdot}_\O'$ are compatible with the inner product $\pint$ on $\H$, and the spaces $\D[\noo]$, $\D[\noo']$ are complete. The equivalence follows, hence, by Theorem \ref{th_equiv_norm_comp}.
\end{proof}

\section{Solvable sesquilinear forms}
\label{sec:solv}

If $\mathcal{E}$ and  $\mathcal{F}$ are two Banach spaces, we will indicate by $\B(\mathcal{E},\mathcal{F})$ the vector space of all bounded operators from $\mathcal{E}$ into  $\mathcal{F}$, and more simply $\B(\mathcal{E})=\B(\mathcal{E},\mathcal{E})$, if $\mathcal{E}=\mathcal{F}$.\\
We recall that if $\mathcal{E}$ is reflexive and $X\in\B(\mathcal{E},\mathcal{E}^\times) $ then, the {\it adjoint operator} $X^\dagger$ of $X$ is defined by
$	\pin{X^\dagger \xi}{\eta}=\ol{\pin{X\eta}{\xi}}$ for all $ \xi,\eta\in \D,$
and $X^\dagger \in\B(\mathcal{E},\mathcal{E}^\times) $.\\

\noindent Let $\O$ be a q-closed sesquilinear form with respect to a norm $\noo$ on $\D$, a dense subspace of $\H$. We denote by $\Eo=\D[\noo]$ and by $\Eo^\times=\D^\times[\noo^\times]$, the conjugate dual space of $\Eo$.\\
Note that, from the definition of q-closed sesquilinear form, a {\it Banach-Gelfand triplet}
$$\Eo [\noo] \hookrightarrow \H[\nor] \hookrightarrow \Eo ^\times [\noo^\times]$$
is well-defined, see \cite{Tp_DB}. This means that the arrows indicate continuous embeddings with dense range (the Banach-Gelfand triplets are special rigged Hilbert spaces, see also \cite{AITp,ATp}). Hence, $\H$ can be identified with a dense subspace of $\Eo^\times$, and we will indicate the value of a conjugate linear functional $\Lambda$ in an element $\xi \in \Eo$ by $\Lambda(\xi)=\pin{\Lambda}{\xi}$. In other words, we will assume that the form which puts $\Eo$ and $\Eo^\times$ in duality is an extension of the inner product $\pint$ of $\H$.\\

\noindent We denote by $\Po(\O)$ the set of bounded sesquilinear forms  $\Up$ on $\H$, such that
\begin{equation}
\label{cond_1_def_PO}
N(\O+\Up)=\{0\},
\end{equation}
and for all $\L\in \Eo^\times $ there exists $\xi\in \Eo$ such that
\begin{equation}
\label{cond_2_def_PO}
\pin{\L}{\eta}=(\O+\Up)(\xi,\eta) \qquad \forall \eta \in \Eo .
\end{equation}

\begin{defin}
	A q-closed form $\O$ is said to be  {\it solvable with respect to}  $\noo$ if the set $\Po(\O)$ is not empty (see \cite[Definition 5.5]{Tp_DB}).
\end{defin}

\begin{osr}
	A densely defined closed sectorial form is solvable (see \cite[Example 5.8]{Tp_DB}).
\end{osr}

Let $\Up$ be a bounded sesquilinear form on $\H$, we put $\O_\Up:=\O+\Up$. If $\xi\in \D$, we can define the conjugate linear functional $\O_\Up^\xi$ on $\Eo$ by
$$
\pin{\O_\Up^\xi}{\eta}=\O_\Up(\xi,\eta)=\O(\xi,\eta)+\Up(\xi,\eta) \qquad \forall \eta \in \Eo,
$$
which is bounded in $\Eo$, and also the operator
\begin{align*}
X_\Up: \Eo &\to \Eo^\times \\
\xi &\mapsto \O_\Up^\xi,
\end{align*}
is bounded; i.e., $X_\Up \in \B(\Eo,\Eo^\times)$.\\

\noindent The next characterization of forms belonging to the set $\Po(\O)$ holds.

\begin{lem}[{\cite[Lemma 5.6]{Tp_DB}}]
	\label{lem_biiez}
	Let $\O$ be a q-closed sesquilinear form on $\D$ with respect to $\noo$. Then, $\Up \in \Po(\O)$ if, and only if, $X_\Up$ is a bijection of $\Eo$ onto $\Eo^\times$ if, and only if, $X_\Up$ is invertible with bounded inverse.
\end{lem}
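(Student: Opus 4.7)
The plan is to observe that the three conditions in the statement are essentially reformulations of one another, with the bridge between the second and third equivalence being the Bounded Inverse Theorem. I would organise the proof as a ring of equivalences using the defining identity $\pin{X_\Up\xi}{\eta}=(\O+\Up)(\xi,\eta)$ for all $\xi,\eta\in\Eo$, which is how $X_\Up$ was built.

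First I would translate condition (\ref{cond_2_def_PO}). Since a conjugate linear functional on $\Eo$ is determined by its values on $\Eo$, the identity $\pin{\L}{\eta}=(\O+\Up)(\xi,\eta)=\pin{X_\Up\xi}{\eta}$ holding for every $\eta\in\Eo$ is exactly the statement $\L=X_\Up\xi$ in $\Eo^\times$. Hence (\ref{cond_2_def_PO}) is equivalent to the surjectivity of $X_\Up$ onto $\Eo^\times$. Next I would handle (\ref{cond_1_def_PO}). Because $\O$ is q-closed, $\D[\noo]$ is already a reflexive Banach space, so $\D=\Eo$ as vector spaces; therefore $\xi\in N(\O+\Up)$ if and only if $(\O+\Up)(\xi,\eta)=0$ for all $\eta\in\Eo$, which is the same as $X_\Up\xi=0$. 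Thus $N(\O+\Up)=\{0\}$ if and only if $X_\Up$ is injective. Combining these two observations yields $\Up\in\Po(\O)\iff X_\Up$ is a bijection of $\Eo$ onto $\Eo^\times$.

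For the last equivalence I would appeal to the Bounded Inverse Theorem. The spaces $\Eo$ and $\Eo^\times$ are Banach spaces (in fact reflexive) and $X_\Up\in\B(\Eo,\Eo^\times)$ is bounded by construction; hence, once it is bijective, the Open Mapping Theorem delivers automatically the boundedness of $X_\Up^{-1}$. The converse implication is immediate, since an operator with a two-sided bounded inverse is a bijection.

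I do not expect any genuine obstacle: the only minor point requiring care is the identification of $N(\O+\Up)$, which is a priori a subspace of $\D$, with $\ker X_\Up$, which lives in $\Eo$; but this identification is trivial for q-closed forms since $\D=\Eo$ as sets and the form $\O+\Up$ is bounded with respect to $\noo$. The rest is a direct translation of the definition of $\Po(\O)$ into the language of the operator $X_\Up$, followed by a single invocation of a standard Banach-space result.
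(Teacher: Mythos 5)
Your proof is correct; the paper itself gives no proof of this lemma, simply citing \cite[Lemma 5.6]{Tp_DB}, and your argument -- identifying condition (\ref{cond_1_def_PO}) with injectivity of $X_\Up$, condition (\ref{cond_2_def_PO}) with surjectivity onto $\Eo^\times$, and invoking the Bounded Inverse Theorem for the final equivalence -- is the standard route and matches how the surrounding text uses the result. Your remark that $N(\O+\Up)\subseteq\D$ coincides with $\ker X_\Up\subseteq\Eo$ because $\D=\Eo$ as sets for a q-closed form is exactly the right point to flag.
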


The following is the converse of Theorem \ref{th_equiv_norm_q-chius} and, as shown in the next corollary, a relevant consequence of the norm equivalence discussed in Section \ref{sec:comp_nor} concerns the solvability of a q-closed sequilinear form.

\begin{teor}
	\label{th_q_cl_sol_norm_eq}	
	Let $\O$ be a q-closed sesquilinear form on $\D$ with respect to $\noo$ and let $\noo'$ be a norm equivalent to $\noo$. Then, $\O$ is q-closed with respect to $\noo'$. If, moreover, $\O$ is solvable with respect to $\noo$, then $\O$ is solvable with respect to $\noo'$.
\end{teor}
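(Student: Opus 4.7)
The plan is to dispose of both claims by directly transferring the defining conditions under the given norm equivalence. Fix constants $c_1,c_2>0$ such that $c_1\n{\xi}_\O\leq \n{\xi}_\O'\leq c_2\n{\xi}_\O$ for every $\xi\in\D$.

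For q-closedness with respect to $\noo'$, I would invoke the simplified characterization given in the proposition following Corollary~\ref{cor_comp}. Each of its three conditions holds for $\noo'$ as an immediate consequence of the corresponding condition for $\noo$: the continuity of the embedding $\D[\noo']\hookrightarrow \H$ follows from $c_1\n{\xi}_\O\leq \n{\xi}_\O'$ combined with the continuous embedding for $\noo$; completeness and reflexivity of $\D[\noo']$ come from the fact that equivalent norms induce the same topology on $\D$, so $\D[\noo]$ and $\D[\noo']$ coincide as topological vector spaces; and the $\noo'$-boundedness of $\O$ follows from its $\noo$-boundedness together with $\n{\cdot}_\O\leq c_1^{-1}\n{\cdot}_\O'$.

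For the solvability part, my strategy is to show that the set $\Po(\O)$ does not depend on which of the two equivalent norms is used, so that nonemptiness is automatically preserved. The two Banach spaces $\Eo$ and $\D[\noo']$ are the same topological vector space; hence their conjugate dual spaces coincide as sets, equipped with equivalent dual norms. Condition~\eqref{cond_1_def_PO}, namely $N(\O+\Up)=\{0\}$, is purely algebraic and does not involve any norm. Condition~\eqref{cond_2_def_PO}---that every conjugate-linear functional in the dual be represented through $\O+\Up$ by some $\xi\in\D$---refers in both cases to the same set of functionals, and so is logically the same requirement. Consequently $\Po(\O)$ computed with respect to $\noo$ coincides with $\Po(\O)$ computed with respect to $\noo'$, and is in particular nonempty whenever the former is.

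I do not foresee any real obstacle; the argument is essentially bookkeeping. The only step that warrants an explicit line of justification is the identification of the two conjugate dual spaces, which follows at once from the observation that a conjugate-linear functional on $\D$ is bounded with respect to $\noo$ if and only if it is bounded with respect to $\noo'$.
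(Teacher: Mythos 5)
Your proof is correct and follows essentially the same route as the paper: both arguments rest on the observation that equivalent norms yield the same conjugate dual space, so the same perturbation $\Up$ witnesses solvability for either norm (the paper merely phrases this by conjugating $X_\Up$ with the identity isomorphism $I$ and its dual $I^\times$, whereas you argue directly that $\Po(\O)$ is norm-independent). No gaps.
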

\begin{proof}
	It is easy to prove that $\O$ is q-closed with respect to $\noo'$. We suppose that $\O$ is solvable with respect to $\noo$, and denote by $\Eo$ and $\Eo'$ the Banach spaces $\D[\nor_\O]$ and $\D[\nor_\O']$, respectively.\\
	Then, by the hypothesis, there exists a bounded sesquilinear form $\Up$ on $\H$ such that the operator $X_\Up$ defined by
	\begin{align*}
	X_\Up:\Eo&\to \Eo^\times  \\
	\xi &\mapsto  \O_\Up^\xi
	\end{align*}
	where	$	\O_\Up^\xi(\eta)=\O(\xi,\eta)+\Up(\xi,\eta)$ for all $\eta \in \Eo,	$ is bijective and continuous. \\
	Due to the assumptions, there exists a continuous isomorphism  $I:\Eo'\to\Eo$  of $\Eo'$ onto $\Eo$. Then, $I^\times:\Eo^\times\to\Eo'^\times$ is a continuous isomorphism between the dual spaces $\Eo^\times$ and $\Eo'^\times$. Therefore the operator $X_\Up':=I^\times X_\Up I:\Eo' \to \Eo'^\times$ is an isomorphism and
	$
	X_\Up' \xi =I^\times X_\Up I\xi=I^\times X_\Up \xi = I^\times \O_\Up^\xi=\O_\Up^\xi,
	$
	so $\O$ is solvable with respect to $\noo'$.
\end{proof}

\begin{cor}
	\label{cor_q_chi_non_ris}
	If $\O$ is a q-closed, non-solvable, sesquilinear form on $\D$ with respect to a norm $\nor_\O$, then $\O$ is not solvable with respect to any norm.
\end{cor}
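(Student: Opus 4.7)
The plan is to argue by contradiction, leveraging the two preceding results. Suppose $\O$ were solvable with respect to some other norm $\nor_\O'$ on $\D$. By definition, solvability with respect to $\nor_\O'$ presupposes that $\O$ is q-closed with respect to $\nor_\O'$. So $\O$ is simultaneously q-closed with respect to both $\nor_\O$ and $\nor_\O'$.

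Next, I invoke Theorem \ref{th_equiv_norm_q-chius}, which states that the norm making a sesquilinear form q-closed is unique up to equivalence. Applying this to $\O$, I conclude that $\nor_\O$ and $\nor_\O'$ are equivalent norms on $\D$.

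Finally, I apply Theorem \ref{th_q_cl_sol_norm_eq}: since $\O$ is solvable with respect to $\nor_\O'$ and $\nor_\O$ is equivalent to $\nor_\O'$, the theorem yields that $\O$ is solvable with respect to $\nor_\O$ as well. This contradicts the hypothesis that $\O$ is non-solvable with respect to $\nor_\O$, completing the proof.

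There is no real obstacle here; the corollary is essentially a repackaging of the combined content of Theorems \ref{th_equiv_norm_q-chius} and \ref{th_q_cl_sol_norm_eq}. The only subtle point to check is that the norm $\nor_\O'$ appearing in the hypothetical solvability assumption is automatically a norm with respect to which $\O$ is q-closed, which follows directly from the definition of solvability.
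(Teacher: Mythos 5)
Your proof is correct and follows essentially the same route as the paper: both arguments combine Theorem \ref{th_equiv_norm_q-chius} (uniqueness of the q-closing norm up to equivalence) with Theorem \ref{th_q_cl_sol_norm_eq} (solvability transfers across equivalent norms); you merely phrase it as a contradiction where the paper argues directly.
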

\begin{proof}
	Let $\nor_\O'$ be a norm with respect to which $\O$ is q-closed. By Theorem \ref{th_equiv_norm_q-chius} the norms $\nor_\O$ and $\nor_\O'$ are equivalent, hence by Theorem \ref{th_q_cl_sol_norm_eq} $\O$ is not solvable with respect to $\noo'$.
\end{proof}

Now, we recall Theorem 5.9 of \cite{Tp_DB}, which generalises Kato's First Representation Theorem, and add new properties of the operator constructed in the proof of that theorem.

\begin{teor}
	\label{th_rapp_risol}
	Let $\O$ be a solvable sesquilinear form on $\D$ with respect to a norm $\noo$. Then there exists a closed operator $T$, with dense domain $D(T)\sub \D$ in $\H$, such that
	\begin{equation}
	\label{eq_rapp}
	\O(\xi,\eta)=\pin{T\xi}{\eta} \qquad \forall \xi\in D(T),\eta \in \D.
	\end{equation}
	Moreover,
	\begin{enumerate}
		\item $D(T)$ is dense in $\D[\noo]$;
		\item if $T'$ is an operator $\H$ with domain $D(T')\subseteq \D$ and
		\begin{equation}
		\label{th_rapp_T'}
		\O(\xi,\eta)=\pin{T'\xi}{\eta}
		\end{equation}
		for all $\xi\in D(T')$ and $\eta$ which belongs to a dense subset of $\D[\noo]$, then $T' \subseteq T$;
		\item if $\Up \in \Po(\O)$ and $B\in \B(\H)$ is the bounded operator such that
		$\Up(\xi,\eta)=\pin{B\xi}{\eta}$  for all $\xi, \eta \in \H$,
		then $T+B$ is invertible and $(T+B)^{-1}\in \B(\H)$. In particular, if $\Up=-\lambda \iota$, with $\lambda \in \C$, then $\lambda \in \rho(T)$, the resolvent set of  $T$;
		\item $T$ is the unique operator satisfying \emph{(\ref{eq_rapp})} with the property that $T+B$ has range $\H$.
	\end{enumerate}
\end{teor}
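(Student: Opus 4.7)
The plan is to build on Theorem 5.9 of \cite{Tp_DB}, which already provides the existence of the operator $T$ together with properties (1) and (3), and to deduce the new assertions (2) and (4) from the explicit construction used there. Recall that if $\Up\in\Po(\O)$ and $B\in\B(\H)$ represents $\Up$ via $\Up(\xi,\eta)=\pin{B\xi}{\eta}$, then by Lemma \ref{lem_biiez} the operator $X_\Up$ is a bijection of $\Eo$ onto $\Eo^\times$ with bounded inverse. Identifying $\H$ with a subspace of $\Eo^\times$ through the Banach-Gelfand triplet, one sets
$$
D(T)=\{\xi\in\Eo:X_\Up\xi\in\H\},\qquad T\xi=X_\Up\xi-B\xi,
$$
so that $T+B$ is $X_\Up$ restricted to $D(T)$ and maps $D(T)$ bijectively onto $\H$ with bounded inverse, which accounts for (3).

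For (2), fix $\xi\in D(T')$ and let $\M\subseteq\Eo$ be a dense subspace on which $\O(\xi,\eta)=\pin{T'\xi}{\eta}$. Then for every $\eta\in\M$ one has
$$
\pin{T'\xi+B\xi}{\eta}=\O(\xi,\eta)+\Up(\xi,\eta)=\pin{X_\Up\xi}{\eta}.
$$
The left-hand side defines a continuous conjugate-linear functional on $\Eo$, because $T'\xi+B\xi\in\H$ and $\H\hookrightarrow\Eo^\times$ continuously, while the right-hand side is continuous by the boundedness of $X_\Up$. Two continuous functionals on a Banach space that coincide on a dense subspace are equal, so the identity extends to all of $\Eo$. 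Hence $X_\Up\xi$, regarded as an element of $\Eo^\times$, actually belongs to $\H$ and equals $T'\xi+B\xi$. Therefore $\xi\in D(T)$ and $T\xi=X_\Up\xi-B\xi=T'\xi$, proving $T'\subseteq T$.

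Assertion (4) then follows at once: if $T''$ satisfies \eqref{eq_rapp} and $T''+B$ has range $\H$, applying (2) with $\M=\D$ yields $T''\subseteq T$. Given $\zeta\in\H$, the range hypothesis provides some $\xi\in D(T'')$ with $(T''+B)\xi=\zeta$, while (3) supplies a unique $\xi'\in D(T)$ with $(T+B)\xi'=\zeta$; since $T''\subseteq T$, necessarily $\xi=\xi'$, so $D(T)\subseteq D(T'')$ and $T=T''$. The only subtle step in the whole argument is the density extension in (2): one must use that the identification of $\H$ inside $\Eo^\times$ is compatible with the duality pairing, so that an element of $\H$ gives rise to the same continuous functional on $\Eo$ through the inner product and through the triplet pairing. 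This is precisely the convention recorded just before the definition of $\Po(\O)$, and once it is invoked the remainder of the proof is routine.
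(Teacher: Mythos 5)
Your proof is correct and follows essentially the same route as the paper: both recover $T=X_\Up-B$ on $D(T)=X_\Up^{-1}(\H)$, prove (2) by extending the identity $\pin{T'\xi+B\xi}{\eta}=\pin{X_\Up\xi}{\eta}$ by continuity from the dense subspace to all of $\Eo$, and deduce (4) from (2) plus the bijectivity of $T+B$. The only point you pass over quickly is why $(T+B)^{-1}$ is bounded \emph{in the norm of} $\H$ rather than merely as the restriction of $X_\Up^{-1}\in\B(\Eo^\times,\Eo)$; the paper makes this explicit via $\n{S^{-1}\eta}\leq\alpha\n{S^{-1}\eta}_\O\leq\alpha M\n{\eta}_\O^\times\leq\alpha M\n{\eta}$, using both continuous embeddings of the triplet.
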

\begin{proof}
	Let $\Eo=\D[\noo]$, $\Up\in \Po(\O)$, $B$ the bounded operator associated to $\Up$ and $X_\Up$  as above. For the proof of the existence of $T$ see \cite{Tp_DB}. We recall that  $X_\Up$ has a bounded inverse $X_\Up^{-1}$, $T=S-B$,
	$
	D(T)=D(S)=X_\Up ^{-1}(\H)=\{\xi\in \D: X_\Up \xi \in \H\},
	$
	and $S\xi = X_\Up \xi$ for all $\xi \in \D$.\\	
	As shown in the proof of Theorem 5.9 of \cite{Tp_DB}, $D(T)$ is dense in $\D[\nor_\O]$; i.e., the point {\it 1}.\\
	Now, note that the relation (\ref{th_rapp_T'}) extends by continuity to all $\eta\in \D$. Let $S'$ be the operator in $\H$ with domain $D(S')=D(T')$ and defined by $S'=T'+B$. Then
	$$
	\l S'\xi | \eta \r = \l T' \xi |\eta \r + \l B \xi | \eta \r= \O(\xi, \eta)+\Upsilon(\xi,\eta)=(\O+\Upsilon)(\xi,\eta)=\l X_\Upsilon \xi | \eta \r
	$$
	for all $\xi \in D(S'),\eta \in \Eo$. Hence if $\xi \in D(S')$ we have $X_\Upsilon \xi = S' \xi \in \H$, and by definition, $\xi \in D(S)$ and $S \xi = X_\Upsilon \xi$; i.e., $S'\subseteq S$ and $T' \subseteq T$. This proves the point {\it 2} of the statement. \\
	For the third part of the statement, by the boundedness of $X_\Up^{-1}$, there exists $M>0$ such that
	$\n{X_\Up^{-1} \Lambda}_\O\leq M\n{\Lambda}_\O^\times$ for all $\Lambda \in \Eo^\times$,
	and, since $S^{-1}$ is the restriction of $X_\Up^{-1}$ to $\H$,
	$$
	\n{S^{-1}\eta}\leq \alpha \n{S^{-1}\eta}_\O \leq \alpha M \n{\eta}_\O^\times \leq \alpha M \n{\eta} \qquad \forall \eta \in \H,
	$$
	i.e., $S^{-1}$ is bounded in $\H$. But, by definition, $S=T+B$.\\
	Finally, if $T'$ is an operator satisfying (\ref{eq_rapp}) and $T'+B$ has range $\H$. As we have already proved, $T'\sub T$; hence $S'=T'+B$ is such that $S'=S$, because $S$ is bijective. Therefore, $T'=T$.
\end{proof}

\begin{osr}
	The operator $T$ constructed above does not depend on the particular considered norm $\noo$, since the norm is unique up to equivalence, and does not depend even on the particular chosen bounded form $\Upsilon\in \Po(\O)$. Indeed, if $T'$ is the operator constructed in the same way as $T$ (with domain $D(T')$) considering another norm $\noo'$ with respect to which $\O$ is q-closed (or considering another form $\Upsilon\in \Po(\O)$), then by the previous theorem we have
	$$
	\O(\xi,\eta)=\pin{T\xi}{\eta} \qquad \forall \xi\in D(T),\eta \in \D,
	$$
	and
	$$
	\O(\xi,\eta)=\pin{T'\xi}{\eta} \qquad \forall \xi\in D(T'),\eta \in \D.
	$$
	So, applying the second statement of Theorem \ref{th_rapp_risol} two times,  $T=T'$.
\end{osr}

For the characteristics of the obtained operator, we give the following definition.

\begin{defin}
	Let $\O$ be a solvable sesquilinear form. The operator $T$ in  Theorem \ref{th_rapp_risol} is called the {\it operator associated} to $\O$.
\end{defin}

\begin{osr}
	As we will see in the examples of the next sections, the property of uniqueness established in the fourth statement of Theorem  \ref{th_rapp_risol} is very useful to determine the operator associated to a solvable sesquilinear form.
\end{osr}

\begin{cor}
	\label{cor_th_rapp_est}
	Let $\O$ be a solvable sesquilinear form on $\D$ with respect $\noo$ and let $T$ be its associated operator. If $\xi \in \D, \chi \in \H$ and $\O(\xi,\eta)=\pin{\chi}{\eta}$ for all $\eta$ which belongs to a dense subspace in $\D[\noo]$, then $\xi\in D(T)$ and $T\xi=\chi$.
\end{cor}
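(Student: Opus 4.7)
The plan is to derive this corollary as a direct application of the uniqueness clause (statement \emph{2}) of Theorem \ref{th_rapp_risol}. The idea is to build an auxiliary operator $T'$ on a one-dimensional subspace that trivially represents $\O$ along the line through $\xi$, and then let the theorem do the work of embedding it into $T$.

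If $\xi=0$ the claim is trivial: the hypothesis reduces to $\pin{\chi}{\eta}=0$ for all $\eta$ in a subspace $M$ dense in $\D[\noo]$; the continuous embedding $\D[\noo]\hookrightarrow\H$ (part of the q-closedness) together with the density of $\D$ in $\H$ guarantees that $M$ is dense in $\H$, so $\chi=0$ and we may take $\xi=0\in D(T)$ with $T0=0$. Suppose now $\xi\neq 0$ and define $T'$ with domain $D(T')=\C\xi\sub\D$ by $T'(\lambda\xi)=\lambda\chi$; this is well defined and linear. Using sesquilinearity of $\O$ together with the hypothesis, for every $\lambda\in\C$ and every $\eta$ in the given dense subspace $M$ of $\D[\noo]$,
$$
\O(\lambda\xi,\eta)=\lambda\,\O(\xi,\eta)=\lambda\pin{\chi}{\eta}=\pin{T'(\lambda\xi)}{\eta},
$$
so $T'$ fulfills the assumption of statement \emph{2} of Theorem \ref{th_rapp_risol}. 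Hence $T'\sub T$, and in particular $\xi\in D(T)$ with $T\xi=T'\xi=\chi$.

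There is essentially no main obstacle: the corollary is tailor-made for the uniqueness part of Theorem \ref{th_rapp_risol}, and the only minor point to notice is that a subspace dense in $\D[\noo]$ remains dense in $\H$, which follows at once from the continuous embedding with dense range built into the definition of q-closedness.
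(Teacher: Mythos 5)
Your proof is correct and follows exactly the route the paper intends: the corollary is stated without a separate proof precisely because it is an immediate application of statement \emph{2} of Theorem \ref{th_rapp_risol}, which is what you do by packaging $\xi\mapsto\chi$ into a one-dimensional operator $T'$ and concluding $T'\subseteq T$. The separate treatment of $\xi=0$ and the observation that density in $\D[\noo]$ implies density in $\H$ are both handled correctly.
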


The next theorem shows that for a sesquilinear form the property of being q-closed, or solvable, is preserved by passing to the adjoint.

\begin{teor}
	\label{th_ris_agg}
	If $\O$ is a q-closed sesquilinear form on $\D$ with respect to a norm $\noo$, then also the adjoint  $\O^*$ is q-closed with respect to $\noo$. Moreover $\Up\in \Po(\O)$ if, and only if, $\Up^*\in \Po(\O^*)$, and if $\O$ is solvable with respect to  $\noo$, and with the associated operator $T$, then $\O^*$ is solvable with respect to $\noo$, with associated operator $T^*$.
\end{teor}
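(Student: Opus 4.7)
The plan is to verify the three assertions in turn, exploiting that the norm $\noo$ and its attendant structure (continuous embedding into $\H$, reflexivity of $\D[\noo]$, compatibility with the inner product) are intrinsic to the norm and independent of the particular form $\O$. Hence for q-closedness of $\O^*$, only the boundedness condition requires checking, and this is immediate from $|\O^*(\xi,\eta)| = |\O(\eta,\xi)| \le \beta\n{\xi}_\O\n{\eta}_\O$. In particular $\Eo$ and $\Eo^\times$ are the same triplet components for both forms. For the equivalence of membership in $\Po(\O)$ and $\Po(\O^*)$, the key computation is that $X_\Up$ and $X_{\Up^*}$ are $\dagger$-adjoints in $\B(\Eo, \Eo^\times)$:
\[
\pin{X_\Up^\dagger \xi}{\eta} = \ol{\pin{X_\Up \eta}{\xi}} = \ol{(\O+\Up)(\eta,\xi)} = (\O^* + \Up^*)(\xi, \eta) = \pin{X_{\Up^*} \xi}{\eta}.
\]
Since $\Eo$ is reflexive, $\dagger$ is an involution on $\B(\Eo, \Eo^\times)$ preserving both bijectivity and boundedness of the inverse, so by Lemma \ref{lem_biiez}, $\Up \in \Po(\O)$ if and only if $\Up^* \in \Po(\O^*)$; in particular $\O^*$ is solvable.

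For the associated operator, fix $\Up \in \Po(\O)$ with bounded representer $B$, so $\Up^*$ is represented by $B^*$. Let $T$, $\hat T$ be the operators associated with $\O$, $\O^*$, and set $S = T+B$, $\hat S = \hat T + B^*$; by Theorem \ref{th_rapp_risol} both are closed on $\H$, surjective onto $\H$, and boundedly invertible. For $\xi \in D(\hat S)$ and $\eta \in D(S)$,
\[
\pin{\hat S \xi}{\eta} = (\O^* + \Up^*)(\xi, \eta) = \ol{(\O + \Up)(\eta, \xi)} = \ol{\pin{S\eta}{\xi}} = \pin{\xi}{S\eta},
\]
which gives $\hat S \sub S^*$. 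Conversely, the surjectivity of $S$ makes $S^*$ injective, so for any $\xi \in D(S^*)$ I use the surjectivity of $\hat S$ to produce $\xi' \in D(\hat S)$ with $\hat S \xi' = S^* \xi$, and then $S^*(\xi - \xi') = 0$ forces $\xi = \xi'$. Hence $\hat S = S^*$, and since $B$ is bounded, $(T+B)^* = T^* + B^*$, so $\hat T = T^*$.

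The main obstacle is exactly this last identification. The tempting shortcut of applying uniqueness clause \emph{4} of Theorem \ref{th_rapp_risol} to the candidate $T^*$ requires the a priori inclusion $D(T^*) \sub \D$, which the abstract Hilbert-space adjoint does not deliver for free. Working one level up and proving the stronger equality $\hat S = S^*$ circumvents this: the required domain inclusion emerges as a by-product of the surjectivity-plus-injectivity argument, rather than needing to be known in advance.
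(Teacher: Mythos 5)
Your proof is correct, and its skeleton matches the paper's: q-closedness of $\O^*$ needs only the boundedness check, the identity $\pin{X_\Up^\dagger \xi}{\eta}=\ol{\pin{X_\Up\eta}{\xi}}=(\O^*+\Up^*)(\xi,\eta)$ combined with Lemma \ref{lem_biiez} gives the equivalence $\Up\in\Po(\O)\Leftrightarrow\Up^*\in\Po(\O^*)$, and the associated operator is identified by working with the shifted operators $S=T+B$ and $\hat S=\hat T+B^*$. The one place you diverge is the final identification $\hat S=S^*$. The paper gets it by recalling from the construction in \cite[Theorem 5.9]{Tp_DB} that $S^*$ is precisely the restriction of $X_\Up^\dagger$ to $\{\xi\in\D: X_\Up^\dagger\xi\in\H\}$, which is by definition the operator $S'$ built for $\O^*$; so the equality is read off from the explicit description of the adjoint inside the Banach--Gelfand triplet. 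You instead prove $\hat S\sub S^*$ directly from the two representation identities and then upgrade the inclusion to equality using surjectivity of $\hat S$ and injectivity of $S^*$ (the latter from surjectivity of $S$), both supplied by part \emph{3} of Theorem \ref{th_rapp_risol}. Your route is self-contained at the Hilbert-space level and avoids re-deriving the formula for $S^*$ in terms of $X_\Up^\dagger$; the paper's route is shorter given that the earlier proof is at hand and, as a by-product, exhibits $D(T^*)$ explicitly as ${X_\Up^\dagger}^{-1}(\H)$. Your closing remark is also well taken: the uniqueness clause \emph{4} cannot be applied blindly to $T^*$ because $D(T^*)\sub\D$ is not known a priori, and both arguments are designed to obtain that inclusion rather than assume it.
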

\begin{proof}
	The first statement is clear, while for the second part, by a symmetry argument, it is sufficient to prove that if $\Up\in \Po(\O)$ then $\Up^*\in \Po(\O^*)$.\\
	Assume that $\O$ is solvable with respect to $\noo$ and let $\Up\in \Po(\O)$. By Lemma \ref{lem_biiez} and, setting $\Eo=\D[\noo]$, we have that $X_\Up$ is bijective and consequently the adjoint $X_\Up^\dagger:\Eo\to \Eo^\times$ is bijective. From
	\begin{equation*}
	\pin{X_\Up^\dagger \xi}{\eta} = \ol{\pin{X_\Up \eta}{\xi}} = \O^*(\xi,\eta)+\Up^*(\xi,\eta) \qquad \forall \xi,\eta \in \D
	\end{equation*}
	it follows that $\O^*$ is solvable with respect to $\noo$ and $\Up^*\in \Po(\O^*)$.	As proved in the proof of Theorem \ref{th_rapp_risol}, $T=S-B$, with $S=X_{\Up|D(T)}$ the restriction to $D(T)$ of $X_\Up$,
	and $B$ is the bounded operator in $\H$ such that
	$\Up(\xi,\eta)=\pin{B\xi}{\eta}$ for all  $\xi,\eta \in \H.$\\
	Let $T'$ be the associated operator to $\O^*$, then we have that $T'=S'-B'$, with $S'=X^\dagger_{\Up|D(T')}$ the restriction to $D(T')$ of $X^\dagger_\Up$, and $B'$ is the bounded operator in $\H$ such that
	$
	\Up^*(\eta,\xi)=\pin{B'\eta}{\xi}$ for all $\eta,\xi \in \H.
	$ 	Hence $B'=B^*$, but $D(S')={X_\Up^\dagger}^{-1}(\H)=D(S^*)$ and $S'=X^\dagger_{\Up|D(S^*)}=S^*$ (see the proof of \cite[Theorem 5.9]{Tp_DB}). Hence, taking into account that $B$ is a bounded operator in $\H$, we conclude that
	\[
	T'=S'-B'=S^*-B^*=(S-B)^*=T^*.  \qedhere
	\]
\end{proof}

An immediate consequence of the previous theorem  concerns symmetric forms.

\begin{cor}
	\label{cor_auto}
	The operator associated to a solvable symmetric form $\O$ is self-adjoint, and $\Up\in \Po(\O)$ if, and only if, $\Up^*\in \Po(\O)$.
\end{cor}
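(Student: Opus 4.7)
The plan is to derive both assertions as immediate specializations of Theorem \ref{th_ris_agg} under the identification $\O=\O^*$ afforded by symmetry. First, since $\O^*=\O$, the set $\Po(\O^*)$ literally coincides with $\Po(\O)$, so the equivalence ``$\Up\in \Po(\O)$ if and only if $\Up^*\in \Po(\O^*)$'' stated in Theorem \ref{th_ris_agg} reads, in this case, as ``$\Up\in \Po(\O)$ if and only if $\Up^*\in \Po(\O)$'', which is exactly the second assertion of the corollary.

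For the self-adjointness of the associated operator $T$, I would invoke the uniqueness of the associated operator recorded in the remark following Theorem \ref{th_rapp_risol}. On the one hand, $T$ is the operator associated to $\O$ by hypothesis; on the other hand, Theorem \ref{th_ris_agg} asserts that $\O^*$ is solvable with respect to $\noo$ and that its associated operator is $T^*$. Since $\O^*=\O$, this second associated operator must coincide with the first, whence $T=T^*$.

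There is essentially no obstacle here: the corollary is a tautological rephrasing of Theorem \ref{th_ris_agg} in the symmetric case. The only point worth flagging, to avoid circularity, is that I should invoke the uniqueness statement via point \emph{2} of Theorem \ref{th_rapp_risol} (or the remark drawn from it), rather than appealing to any independent property of $T^*$.
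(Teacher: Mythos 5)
Your argument is correct and is exactly what the paper intends: the corollary is stated there as an immediate consequence of Theorem \ref{th_ris_agg}, obtained by substituting $\O^*=\O$ and using the well-definedness of the associated operator. No gaps.
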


Using point {\it 1} in Theorem \ref{th_rapp_risol}, we prove a connection between the numerical ranges of a solvable sesquilinear form and its associated operator, which implies also the converse of Corollary \ref{cor_auto}.

\begin{pro}
	The numerical range of the operator associated to a solvable sesquilinear form is a dense subset of the numerical range of the form.
\end{pro}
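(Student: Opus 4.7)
The plan is to establish the two inclusions $\rn_T\subseteq \rn_\O$ and $\rn_\O\subseteq\overline{\rn_T}$, whose combination is exactly the statement. The first inclusion is immediate from the representation formula: if $\xi\in D(T)$ with $\n{\xi}=1$, then $\pin{T\xi}{\xi}=\O(\xi,\xi)$, so it belongs to $\rn_\O$ as well. The content is therefore in the second inclusion, which I would obtain by approximation.

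More precisely, fix $\zeta\in \rn_\O$, so $\zeta=\O(\xi,\xi)$ for some $\xi\in\D$ with $\n{\xi}=1$. By point {\it 1} of Theorem \ref{th_rapp_risol}, $D(T)$ is dense in $\D[\noo]$, so I can pick a sequence $\{\xi_n\}\subseteq D(T)$ with $\n{\xi_n-\xi}_\O\to 0$. Since $\noo$ is compatible with the inner product, condition {\it 1} of Definition \ref{def_q_chiusa} gives $\n{\xi_n-\xi}\to 0$, in particular $\n{\xi_n}\to 1$. Hence, for $n$ large enough, I can set $\eta_n:=\xi_n/\n{\xi_n}\in D(T)$, which satisfies $\n{\eta_n}=1$. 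Writing
$$
\eta_n-\xi=\frac{1}{\n{\xi_n}}\bigl(\xi_n-\xi\bigr)+\Bigl(\frac{1}{\n{\xi_n}}-1\Bigr)\xi,
$$
one checks $\n{\eta_n-\xi}_\O\to 0$, and then by the boundedness of $\O$ on $\D[\noo]$ (condition {\it 3} of the proposition characterising q-closed forms),
$$
\O(\eta_n,\eta_n)\longrightarrow \O(\xi,\xi)=\zeta.
$$
On the other hand, since $\eta_n\in D(T)$, the representation formula yields $\O(\eta_n,\eta_n)=\pin{T\eta_n}{\eta_n}\in\rn_T$. Therefore $\zeta$ is a limit of elements of $\rn_T$, i.e. $\zeta\in\overline{\rn_T}$.

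The step I expect to require most care is the normalisation argument: I need the approximating sequence in $D(T)$ to be of unit norm in $\H$ (so that the values $\pin{T\eta_n}{\eta_n}$ lie in $\rn_T$ as defined), while the natural convergence supplied by point {\it 1} of Theorem \ref{th_rapp_risol} is only in the $\noo$-topology. The compatibility of $\noo$ with $\pint$ converts $\noo$-convergence into $\nor$-convergence, which then secures the $\n{\xi_n}\to 1$ needed for the rescaling, and the same $\noo$-continuity ensures the rescaled sequence still approximates $\xi$ in $\D[\noo]$ so that the bounded form $\O$ can be evaluated in the limit.
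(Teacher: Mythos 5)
Your proof is correct and follows exactly the route the paper indicates (the paper omits a written proof but announces that it uses point \emph{1} of Theorem \ref{th_rapp_risol}, i.e.\ the density of $D(T)$ in $\D[\noo]$, which is precisely your key ingredient). The normalisation step and the passage from $\noo$-convergence to $\nor$-convergence via the continuous embedding are handled correctly, so nothing is missing.
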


\begin{cor}
	\label{cor_auto<->simm}
	The operator associated to a solvable sesquilinear form is self-adjoint if, and only if, the form is symmetric.
\end{cor}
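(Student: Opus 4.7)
The statement is an equivalence, so I would treat the two directions separately.

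The ($\Leftarrow$) direction is immediate from Corollary \ref{cor_auto}, which already says that the operator associated to a solvable symmetric form is self-adjoint. So the substantive content is the ($\Rightarrow$) direction, for which the plan is to use the preceding proposition together with a polarization argument.

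Suppose $T$, the operator associated to the solvable form $\O$, is self-adjoint. Then $\rn_T \subseteq \R$. By the proposition just proved, $\rn_T$ is dense in $\rn_\O$, hence $\rn_\O \subseteq \ol{\rn_T} \subseteq \R$. By definition of $\rn_\O$ and homogeneity in $\xi$, this gives $\O(\xi,\xi) \in \R$ for every $\xi \in \D$.

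It then remains to deduce symmetry from the reality of the diagonal. The key identity is the standard polarization
$$4\O(\xi,\eta) = \sum_{k=0}^{3} i^k\,\O(\xi+i^k\eta,\xi+i^k\eta) \qquad \forall \xi,\eta \in \D,$$
which, combined with the observation that $\eta \pm i\xi = \pm i(\xi \mp i\eta)$ (so the $i$-sesquilinearity yields $\O(\eta \pm i\xi,\eta\pm i\xi) = \O(\xi \mp i\eta,\xi\mp i\eta)$), expresses $\O(\eta,\xi)$ as the complex conjugate of the expression giving $\O(\xi,\eta)$, once one uses that each summand $\O(\zeta,\zeta)$ is real. Thus $\O(\eta,\xi) = \ol{\O(\xi,\eta)}$ for all $\xi,\eta \in \D$, i.e.\ $\O = \O^*$.

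There is no real obstacle here: the proposition on numerical ranges does all the heavy lifting, and the rest is polarization. The only point to be careful about is matching the sesquilinearity convention of the paper (linear in the first argument, conjugate linear in the second) when performing the polarization identity, in order to correctly relate $\O(\xi,\eta)$ to $\ol{\O(\eta,\xi)}$.
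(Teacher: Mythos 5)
Your proof is correct and follows the paper's intended route exactly: the ($\Leftarrow$) direction is Corollary \ref{cor_auto}, and the ($\Rightarrow$) direction uses the preceding proposition on numerical ranges (which the paper explicitly introduces as implying this converse) together with the standard polarization argument showing that a sesquilinear form with real diagonal is symmetric. Nothing is missing.
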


The following interesting question arises: are there two solvable sesquilinear forms with the same associated operator? The answer is affirmative if the domain of a form is contained in the one of the other form. This condition is already considered in Proposition 3.2 of \cite{FHdeSW} (see also Theorem 3.2 of \cite{GKMV}). We will come back to this problem in Section \ref{sec:casi_part}.

\begin{lem}
	\label{lem_unic_op}
	Let $\O_1$ and $\O_2$ be two solvable sesquilinear forms with domains $\D_1$ and $\D_2$, respectively, and with the same associated operator $T$. If $\D_1\subseteq \D_2$ then $\D_1=\D_2$ and $\O_1=\O_2$.
\end{lem}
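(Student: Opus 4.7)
The plan is to split the conclusion into two steps: (a) $\O_1=\O_2$ on $\D_1\times\D_1$, and then (b) $\D_2\subseteq\D_1$, using (a) in the proof of (b).

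First I would establish a norm comparison on the smaller domain. Since $\D_1\subseteq\D_2$, the identity $I:\D_1[\n{\cdot}_{\O_1}]\to\D_2[\n{\cdot}_{\O_2}]$ is a well-defined map between Banach spaces. Its graph is closed because both spaces embed continuously into $\H$: if $\xi_n\to\xi$ in $\n{\cdot}_{\O_1}$ and $\xi_n\to\eta$ in $\n{\cdot}_{\O_2}$, both sequences converge in $\n{\cdot}$, so $\xi=\eta$. The Closed Graph Theorem then yields $C>0$ with $\n{\xi}_{\O_2}\leq C\n{\xi}_{\O_1}$ for every $\xi\in\D_1$. I expect this comparison to be the main technical obstacle; once it is in place, the rest is a density and bijectivity exercise using the tools already developed.

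For step (a), if $\xi\in D(T)$ and $\eta\in\D_1\subseteq\D_2$, Theorem \ref{th_rapp_risol} applied to each form gives $\O_1(\xi,\eta)=\pin{T\xi}{\eta}=\O_2(\xi,\eta)$. Fixing $\eta\in\D_1$, the functional $\O_1(\cdot,\eta)$ is $\n{\cdot}_{\O_1}$-continuous on $\D_1$ by q-closedness, while $\O_2(\cdot,\eta)$ is $\n{\cdot}_{\O_2}$-continuous on $\D_2$ and hence, by the comparison above, also $\n{\cdot}_{\O_1}$-continuous on $\D_1$. They coincide on $D(T)$, which is dense in $\D_1[\n{\cdot}_{\O_1}]$ by point~1 of Theorem \ref{th_rapp_risol}, so they coincide on all of $\D_1$.

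For step (b), fix $\xi\in\D_2$ and choose $\Up_1\in\Po(\O_1)$ with associated $B_1\in\B(\H)$. The map $\eta\mapsto(\O_2+\Up_1)(\xi,\eta)$, restricted to $\D_1$, is bounded in $\n{\cdot}_{\O_1}$ (via the norm comparison for the $\O_2$-part and via $\D_1\hookrightarrow\H$ for the $\Up_1$-part). By Lemma \ref{lem_biiez} the operator $X_{\Up_1}$ associated to $\O_1+\Up_1$ is a bijection of $\D_1[\n{\cdot}_{\O_1}]$ onto its conjugate dual, so there exists $\xi'\in\D_1$ with
$$
(\O_1+\Up_1)(\xi',\eta)=(\O_2+\Up_1)(\xi,\eta)\qquad\forall\eta\in\D_1.
$$
By (a) this rearranges to $(\O_2+\Up_1)(\xi'-\xi,\eta)=0$ for $\eta\in\D_1$; both sides are $\n{\cdot}_{\O_2}$-continuous on $\D_2$, and $\D_1\supseteq D(T)$ is dense in $\D_2[\n{\cdot}_{\O_2}]$ by point~1 of Theorem \ref{th_rapp_risol}, so the equality extends to $\D_2$, giving
$$
\O_2(\xi'-\xi,\eta)=\pin{-B_1(\xi'-\xi)}{\eta}\qquad\forall\eta\in\D_2.
$$
Corollary \ref{cor_th_rapp_est} applied to $\O_2$ then yields $\xi'-\xi\in D(T)$ and $(T+B_1)(\xi'-\xi)=0$; since $T+B_1$ is invertible by point~3 of Theorem \ref{th_rapp_risol}, $\xi=\xi'\in\D_1$. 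Together with (a) this proves $\D_1=\D_2$ and $\O_1=\O_2$.
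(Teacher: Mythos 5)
Your proof is correct, and its overall skeleton matches the paper's: the same Closed Graph Theorem argument gives $\n{\cdot}_{\O_2}\leq C\n{\cdot}_{\O_1}$ on $\D_1$, and both proofs then take $\xi\in\D_2$, view the induced functional as an element of $\D_1^\times$, and use the bijectivity of $X_{\Up_1}$ to produce $\xi'\in\D_1$ solving the same equation. Where you diverge is in identifying $\xi'-\xi$ as an element of $D(T)$: the paper passes to the adjoint forms $\O_1^*,\O_2^*$, invokes Theorem \ref{th_ris_agg} to get that $T^*$ is associated to both, tests against $\eta\in D(T^*)$, and concludes via the biduality $T^{**}=T$ that $\xi'-\xi\in D(T)$. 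You instead first prove the auxiliary fact that $\O_1=\O_2$ on $\D_1\times\D_1$ (by density of $D(T)$ in $\D_1[\n{\cdot}_{\O_1}]$ and continuity in the first variable), which lets you rewrite the identity purely in terms of $\O_2$ and then apply Corollary \ref{cor_th_rapp_est} together with the invertibility of $T+B_1$ from point 3 of Theorem \ref{th_rapp_risol}. Your route avoids the adjoint machinery entirely and stays with the forward representation; the price is the extra intermediate step (a), which the paper gets for free at the end once $\D_1=\D_2$ is known. Both arguments are complete; yours is arguably slightly more self-contained since it does not depend on Theorem \ref{th_ris_agg}.
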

\begin{proof}
	First of all, we will prove that $\D_1=\D_2$. Denote by $\nor_1$ a norm with respect to which $\O_1$ is solvable and by $\nor_2$ a norm with respect to which $\O_2$ is solvable. Put $\Ee_1:=\D_1[\nor_1]$ and $\Ee_2:=\D_2[\nor_2]$. \\
	The embedding $\mathfrak{I} : \Ee_1 \to \Ee_2$ is closed. Indeed, if $\{\xi_n\}$ is a sequence in $\D_1$ such that $\xi_n \conv{\nor_1} \xi$ and $\xi_n=\mathfrak{I}\xi_n \conv{\nor_2} \xi'$, for some $\xi\in \D_1, \xi'\in \D_2$, then $\xi_n \to \xi$ and $\xi_n \to \xi'$ in $\H$, and so $\mathfrak{I}\xi=\xi=\xi'$. By the Closed Graph Theorem, $\mathfrak{I}$ is bounded, i.e. there exists $\alpha>0$ such that $\n{\xi}_2\leq \alpha\n{\xi}_1$ for all $\xi\in \D_1$.\\
	Let $\xi\in \D_2$, then the functional $\Lambda(\eta)=\O_2(\xi,\eta)$, with $\eta \in \D_1$, is an element of $\Ee_1^\times$. Since by the hypothesis $\O_1$ is solvable, there exists a bounded form $\Up\in \Po(\O)$, and for every $\xi\in \D_2$ there exists $\chi \in \D_1$ such that
	$$
	\O_2(\xi,\eta)=\Lambda (\eta)=(\O_1+\Up)(\chi,\eta) \qquad \forall \eta \in \D_1,
	$$
	and hence
	\begin{equation}
	\label{pass_D1cD2}
	(\O_1^*+\Up^*)(\eta,\chi)=\O_2^*(\eta,\xi) \qquad \forall \eta \in \D_1.
	\end{equation}
	By Theorem \ref{th_ris_agg}, $T^*$ is the operator associated with both $\O_1^*$ and  $\O_2^*$. Then, for all $\eta \in D(T^*)$, we have from (\ref{pass_D1cD2}) that $	\pin{(T^*+B^*)\eta}{\chi}=\pin{T^*\eta}{\xi}$,
	and therefore $\pin{T^*\eta}{\chi-\xi}=\pin{-B^*\eta}{\chi}=\pin{\eta}{-B\chi} $  for all $\eta \in D(T^*)$.
	By definition of the adjoint of $T^*$, $\chi-\xi\in D(T)\sub\D_1$ and $T(\chi-\xi)=-B\chi$. Hence, $\xi\in \D_1$, and $\D_1=\D_2$.\\
	The domain $D(T)$ of $T$ is dense in both $\D[\nor_1]$ and $\D[\nor_2]$, and on $D(T)$ the forms coincide. By Theorem \ref{th_equiv_norm_q-chius}, the two norms are equivalent, hence the equality of $\O_1$ and $\O_2$ is true, by continuity, in the whole of $\D$.
\end{proof}

We end this section with an example of a solvable sesquilinear form, and two examples of q-closed sesquilinear forms which are not solvable.

\begin{esm}
	\label{esm_form_reg_l_2(3)}
	As proved in \cite[Example 6.1]{Tp_DB}, for every sequence $\ull{\alpha}:=\{\alpha_n\}$ of complex numbers, the form
	$$
	\Oa(\{\xi_n\},\{\eta_n\})=\sum_{n=1}^\infty \alpha_n \xi_n \ol{\eta_n}
	$$
	with domain $D(\Oa)=\{\{\xi_n\}\in \ll_2:\sum_{n=1}^\infty |\alpha_n||\xi_n|^2< \infty\}$,
	is solvable with respect to the norm given by
	$$
	\n{\{\xi_n\}}_{\Oa}= \left (\sum_{n=1}^\infty |\xi_n|^2+\sum_{n=1}^\infty |\alpha_n| |\xi_n|^2\right )^\mez.
	$$
	In particular, we have that $\Up=-\lambda \iota$, with $\lambda\notin \ol{\{\alpha_n:n\in \N\}}$, belongs to $\Po(\Oa)$.\\
	We will determine the operator $T$ associated to $\Oa$. We denote by $\pint$ the inner product of $\ll_2$, and by $\Ma$ the multiplication operator by $\ull{\alpha}$, with domain
	$$
	D(\Ma)=\left \{\{\xi_n\}\in \mathit{l}_2 : \sum_{n=1}^\infty |\alpha_n|^2|\xi_n|^2 <\infty \right \}
	$$
	and given by $\Ma \{\xi_n\}=\{\alpha_n\xi_n\},$ for every $\{\xi_n\} \in D(\Ma)$.\\
	If $\{\xi_n\} \in D(\Ma)$, then
	$$
	2\sum_{n=1}^\infty |\alpha_n||\xi_n|^2\leq \sum_{n=1}^\infty |\xi_n|^2+\sum_{n=1}^\infty |\alpha_n|^2|\xi_n|^2<\infty
	$$
	hence  $\{\xi_n\} \in D(\Oa)$, and, moreover,
	$$
	\Oa(\{\xi_n\},\{\eta_n\})=\pin{\mathscr{M}_{\ull{\alpha}} \{\xi_n\}}{\{\eta_n\}} \qquad \forall \{\xi_n\}\in D(\Ma),\{\eta_n\}\in D(\Oa).
	$$
	By Theorem \ref{th_rapp_risol} we have $\mathscr{M}_{\ull{\alpha}}\sub T$. \\
	If $\{\xi_n\} \in D(T)\sub D(\Oa)$ and $T\{\xi_n\}=\{\chi_n\}$, then, in particular,
	$$
	\Oa(\{\xi_n\},e_n)=\pin{T\{\xi_n\}}{e_n} \qquad \forall n\in \N,
	$$
	where $\{e_n\}$ is the canonical basis of $\ll_2$; i.e., $	\chi_n=\alpha_n\xi_n$ for all $ n\in \N$,
	and therefore $T\{\xi_n\}=\{\alpha_n\xi_n\}$. But, taking into account that $\{\chi_n\}\in \ll_2$, we have $\displaystyle \sum_{n=1}^\infty |\alpha_n|^2|\xi_n|^2<\infty$; i.e., $\{\xi_n\}\in D(\Ma)$. Hence, $T=\Ma$.
\end{esm}

\begin{esm}[Example \ref{esm_O_T_(2)}]
	Let $T$ be a closed operator with dense domain $\D$ in $\H$, and let $\O_T$ be the sesquilinear form on $\D$ given by	$\O_T(\xi,\eta)=\pin{T\xi}{\eta}$ for all $\xi,\eta \in \D$,
	which, as seen in the Example \ref{esm_O_T_(2)}, is q-closed with respect to the graph norm $\nor_T$ of $T$.\\
	$\O_T$ is solvable with respect to $\nor_T$ if, and only if, $\D=\H$ and $T\in \B(\H)$. Indeed, if $\Up\in \Po(\O)$ and $B\in \B(\H)$ such that $\Up(\xi,\eta)=\pin{B\xi}{\eta}$ for all $\xi,\eta \in \D$, then for all $\Lambda \in \D^\times [\n{\cdot}_T^\times]$ there exists $\xi \in \D$ such that
	$\pin{\Lambda }{\eta}=\pin{(T+B)\xi}{\eta}$ for all $\eta \in \D,$
	and since
	$$\displaystyle \sup_{\n{\eta}=1} |\pin{(T+B)\xi}{\eta}|\leq \n{(T+B)\xi},$$
	we have $\Lambda \in \H$; i.e., $\D^\times=\H$. \\
	By the Closed Graph Theorem, the norms of $\D^\times$ and of $\H$ are equivalent, and hence also the norms $\nor_T$ and $\nor$ are equivalent. It follows that $T$ is bounded and, since it is closed, it has domain $\D=\ol{\D}=\H$; i.e., $T\in \B(\H)$.\\
	The form $\O_T$, with $T$ unbounded, is therefore q-closed and non-solvable with respect to $\nor_T$, and by Corollary \ref{cor_q_chi_non_ris}, it is not solvable with respect to any norm.\\
	Instead, if $\D=\H$ and $T\in \B(\H)$ then $\O_T$ is a bounded sesquilinear form in $\H$ with domain $\H$. So, in particular, it is a sectorial form and then solvable.
\end{esm}

Note that this example depends strongly on the fact that the domain $\D$ of $\O_T$ is exactly the domain of $T$ and on the choice of the norm $\nor_T$, which makes $\O_T$ q-closed. Although $\O_T$ is not solvable (if $T$ is unbounded), it may admit a solvable extension. Indeed, for example, if $T$ is a sectorial operator defined on a dense domain $\D$ in $\H$ then, from \cite[Ch. V, Theorem 1.27]{Kato}, $\O_T$ is closable, hence it is q-closable with respect to a certain norm (which is not, in general, a norm equivalent to $\nor_T$), and its closure $\ol{\O_T}$ (which, in general, is defined on a bigger domain than $\D$) is a solvable form.

Unlike the previous example, in the next one we consider a norm which is not induced by an inner product.

\begin{esm}
	Let $\H=L^2([0,1])$, with the usual inner product $\pint_2$ and the norm $\nor_2$, and let $p>2$. The reflexive Banach space $L^p([0,1])$, hence, is continuously embedded in $\H$, and we denote by $\D$ the (dense) subspace of $\H$ which corresponds to the range of this embedding. We consider a measurable function  $w\in L^{\frac{p}{p-2}}([0,1])$. Then the sesquilinear form  $\O$ on $\D$ given by
	$$
	\O(f,g)=\int_0^1 wf\ol{g}dx \qquad \forall f,g\in \D,
	$$
	is well-defined. Indeed, by the H\"{o}lder inequality, for all $f,g\in L^p([0,1])$ one has
	$$
	\int_0^1 |wf\ol{g}|dx\leq \n{w}_{\frac{p}{p-2}}\n{f}_p\n{g}_p <\infty,
	$$
	where $\n{\cdot}_{\frac{p}{p-2}}$ and $\n{\cdot}_p$ are the norms of $L^{\frac{p}{p-2}}([0,1])$ and $L^p([0,1])$, respectively.\\
	We continue to denote by $\nor_p$ the norm on $\D$ such that $\D[\nor_p]\equiv L^p([0,1])$. $\O$ is q-closed with respect to $\nor_p$, but $\O$ is not solvable with respect to $\nor_p$. Were it so then $\D[\nor_p]$ would be isomorphic (by the operator $X_\Up$, with $\Up$ a bounded sesquilinear form) to its dual $\D^\times[\nor_p^\times]$. But $L^p([0,1])$ is not isomorphic to its dual.	Corollary \ref{cor_q_chi_non_ris} establishes hence that $\O$ is not solvable with respect to any norm.
\end{esm}

\section{Criteria of solvability}
\label{sec:cr_solv}

In this section, we formulate criteria for establishing if a given q-closed sesquilinear form is solvable, in addition to those already seen, like Lemma \ref{lem_biiez}.
The definition of q-closed sesquilinear form is not affected by the notion of the numerical range, but the latter will play a relevant role in some criteria.

\begin{lem}
	\label{lem_crit}
	Let $\O$ be a q-closed sesquilinear form on $\D$ with respect to a norm $\noo$ and let $\Up$ be a bounded form on $\H$. Consider the following statements:
	\begin{enumerate}
		\item[(a)] $N(\O+\Up)=\{0\}$;
		\item[(b)] $N(\O^*+\Up^*)=\{0\}$;		
		\item[(c)] there exists a constant $c_1>0$ such that
		$$
		c_1\n{\xi}_\O\leq \sup_{\n{\eta}_\O=1} |(\O+\Up)(\xi,\eta)| \qquad \forall \xi \in \D;
		$$
		\item[(d)] there exists a constant $c_2>0$ such that
		$$
		c_2\n{\eta}_\O\leq \sup_{\n{\xi}_\O=1} |(\O+\Up)(\xi,\eta)| \qquad \forall \eta \in \D;
		$$
		\item[(e)] for every sequence $\{\xi_n\}$ in $\D$ such that
		$$ \sup_{\n{\eta}_\O=1} |(\O+\Up)(\xi_n,\eta)|\to 0,$$
		$ \n{\xi_n}_\O\to 0$ results;
		\item[(f)] for every sequence $\{\eta_n\}$ in $\D$ such that
		$$ \sup_{\n{\xi}_\O=1} |(\O+\Up)(\xi,\eta_n)|\to 0,$$
		$ \n{\eta_n}_\O\to 0$  results.
	\end{enumerate}
	Statements (c) and (e) are equivalent, and the same holds for (d) and (f). Moreover, the following statements are equivalent.
	\begin{enumerate}
		\item $\Up\in \Po(\O)$;
		\item (a) and (d) hold;
		\item (b) and (c) hold;
		\item (c) and (d) hold.
	\end{enumerate}
\end{lem}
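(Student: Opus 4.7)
The strategy is to translate every condition into a property of the bounded operator $X_\Up \in \B(\Eo, \Eo^\times)$ and its adjoint $X_\Up^\dagger$, and then combine Lemma \ref{lem_biiez} with standard Banach-space duality, which applies because $\Eo$ is a reflexive Banach space. Using the defining identity $\pin{X_\Up^\dagger \xi}{\eta} = \ol{\pin{X_\Up \eta}{\xi}}$ and the dual norm on $\Eo^\times$, I first record the dictionary: (a) is equivalent to $X_\Up$ being injective; (b) is equivalent to $X_\Up^\dagger$ being injective; and the suprema appearing in (c) and (d) equal $\n{X_\Up \xi}_\O^\times$ and $\n{X_\Up^\dagger \eta}_\O^\times$ respectively, so (c) and (d) say precisely that $X_\Up$ and $X_\Up^\dagger$ are bounded below.

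The equivalences (c) $\Leftrightarrow$ (e) and (d) $\Leftrightarrow$ (f) are then routine. The implication (c) $\Rightarrow$ (e) is immediate from the inequality in (c). For the converse, if (c) fails, then by homogeneity there exists a sequence $\{\xi_n\}$ with $\n{\xi_n}_\O = 1$ and $\sup_{\n{\eta}_\O = 1} |(\O+\Up)(\xi_n, \eta)| \to 0$, contradicting (e). The argument for (d) $\Leftrightarrow$ (f) is identical.

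For the main block, Lemma \ref{lem_biiez} tells me that $\Up \in \Po(\O)$ if and only if $X_\Up$ is a bijection (automatically with bounded inverse by the open mapping theorem); and in that case $X_\Up^\dagger$ is bijective too, giving (a), (b), (c), (d) at once. For the converses I would invoke two standard duality results valid in the reflexive pairing $\Eo \leftrightarrow \Eo^\times$: the kernel of $X_\Up^\dagger$ coincides with the annihilator of the range of $X_\Up$ (so (b) is equivalent to $X_\Up$ having dense range), and by the closed range theorem $X_\Up$ has closed range if and only if $X_\Up^\dagger$ does. Combining these: (d) says $X_\Up^\dagger$ is injective with closed range, hence $X_\Up$ has dense range and closed range, i.e.\ is surjective, so (a) $+$ (d) gives bijectivity of $X_\Up$; (c) alone provides injectivity and closed range of $X_\Up$, and (b) adds dense range, so (b) $+$ (c) gives bijectivity; finally (c) $+$ (d) combines injectivity from (c) with surjectivity deduced from (d).

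The main delicate point is the careful use of the closed range theorem and the Hahn-Banach characterisation of density in the conjugate-dual pairing of $\Eo$ and $\Eo^\times$; this is harmless since $\Eo^\times$ can be identified isometrically with the usual topological dual of $\Eo$ by complex conjugation of functionals, so the classical theorems apply verbatim, and the reflexivity built into the definition of q-closedness is exactly what makes the double-dual identification available.
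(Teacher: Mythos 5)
Your proposal is correct and follows essentially the same route as the paper: both translate each condition into a property of $X_\Up$ and $X_\Up^\dagger$ (injectivity, dense range, being bounded below) and then combine Lemma \ref{lem_biiez} with standard duality in the reflexive pairing of $\Eo$ with $\Eo^\times$. The only difference is one of explicitness --- you spell out the annihilator identity and the closed range theorem, whereas the paper merely lists the resulting equivalences and leaves their combination implicit.
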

\begin{proof}
	The equivalences are proved noting the following facts.
	\begin{itemize}
		\item $N(\O+\Up)=\{0\}$ if, and only if, $X_\Up$ is invertible, if, and only if, $X_\Up^\dagger$ has dense range (by the reflexivity of $\Eo=\D[\noo]$).
		\item $N(\O^*+\Up^*)=\{0\}$ if, and only if, $X_\Up^\dagger$ is invertible, if, and only if, $X_\Up$ has dense range.
		\item Since
		$$
		\sup_{\n{\eta}_\O=1} |(\O+\Up)(\xi,\eta)|=\sup_{\n{\eta}_\O=1} |\pin{X_\Up \xi}{\eta}|= \n{X_\Up \xi}_\O^\times \qquad \forall \xi \in \D,
		$$
		statement (c) holds if, and only if, $X_\Up$ is invertible with bounded inverse, if, and only if, statement (e) holds.
		\item Since
		$$
		\sup_{\n{\xi}_\O=1} |(\O+\Up)(\xi,\eta)|=\sup_{\n{\xi}_\O=1} |\pin{X_\Up^\dagger \eta}{\xi}|= \n{X_\Up^\dagger \eta}_\O^\times \qquad \forall \eta \in \D,
		$$
		statement (d) holds if, and only if, $X_\Up^\dagger$ is invertible with bounded inverse, if, and only if, statement (f) holds.
		\item $X_\Up$ is a bijection of $\Eo$ onto $\Eo^\times$ if, and only if, $X_\Up^\dagger$ is a bijection of $\Eo$ onto $\Eo^\times$.	\qedhere	
	\end{itemize}
\end{proof}

Using Lemma \ref{lem_crit} we obtain two specific cases.

\begin{teor}
	\label{crit_gener_rn}
	Let $\O$ be a q-closed sesquilinear form on $\D$ with respect to a norm $\noo$ with numerical range $\rn_\O$ and let $\Up$ be a  bounded form in $\H$. Assume that $\rn_\O\cap \rn_{-\Up}=\varnothing$, where $\rn_{-\Up}$ is the numerical range of $-\Up$. Then, $\Upsilon \in \Po(\O)$ if, and only if, either the statement 1. or 2. below holds
	\begin{enumerate}
		\item if $\{\xi_n\}$ is a sequence in $\D$ such that $\displaystyle \sup_{\n{\eta}_\O=1} |(\O+\Up)(\xi_n,\eta)|\to 0$, then $ \n{\xi_n}_\O\to 0$.
		\item there exists a constant $c>0$ such that
		$$ c\n{\xi}_\O\leq \sup_{\n{\eta}_\O=1} |(\O+\Up)(\xi,\eta)| \qquad \forall \xi \in \D.$$
	\end{enumerate}
\end{teor}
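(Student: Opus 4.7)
The plan is to apply Lemma \ref{lem_crit}. That lemma already gives (c) $\iff$ (e), which are precisely statements 2 and 1 of the present theorem, so items 1 and 2 are automatically equivalent to each other. Moreover, the lemma gives $\Up \in \Po(\O)$ if, and only if, (b) and (c) hold. It therefore suffices to prove that the numerical range hypothesis $\rn_\O \cap \rn_{-\Up} = \varnothing$ automatically forces both (a) $N(\O+\Up) = \{0\}$ and (b) $N(\O^*+\Up^*) = \{0\}$; after that, $\Up \in \Po(\O)$ collapses to being equivalent to (c), and hence also to (e).

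The key observation is that disjointness of $\rn_\O$ and $\rn_{-\Up}$ is equivalent to $(\O+\Up)(\xi,\xi) \neq 0$ for every nonzero $\xi \in \D$. Indeed, for such $\xi$, the unit vector $\xi/\n{\xi}$ produces $\O(\xi/\n{\xi},\xi/\n{\xi}) \in \rn_\O$ and $-\Up(\xi/\n{\xi},\xi/\n{\xi}) \in \rn_{-\Up}$, and these two complex numbers must differ. Applied to any $\xi \in N(\O+\Up)$, which in particular satisfies $(\O+\Up)(\xi,\xi) = 0$, this forces $\xi = 0$, proving (a). For (b), the identity $(\O^*+\Up^*)(\eta,\eta) = \overline{(\O+\Up)(\eta,\eta)}$ shows that any $\eta \in N(\O^*+\Up^*)$ also satisfies $(\O+\Up)(\eta,\eta) = 0$, so $\eta = 0$ by the same argument.

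I do not anticipate any real obstacle: once (a) and (b) are handed over for free, all the Banach-space duality content is already encapsulated in Lemma \ref{lem_crit}. The only subtlety worth flagging is that $\rn_{-\Up}$ is taken over unit vectors of the whole space $\H$, whereas $N(\O+\Up)$ sits in $\D$; this causes no trouble, since the disjointness hypothesis is invoked only on the diagonal of $\D \subseteq \H$, which is exactly where we need it.
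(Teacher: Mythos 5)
Your proposal is correct and follows essentially the same route as the paper: the paper likewise deduces $N(\O^*+\Up^*)=\{0\}$ from $\rn_\O\cap\rn_{-\Up}=\varnothing$ via $(\O^*+\Up^*)(\eta,\eta)=\ol{(\O+\Up)(\eta,\eta)}$ and then invokes Lemma \ref{lem_crit} to reduce $\Up\in\Po(\O)$ to condition (c), equivalently (e). The only nitpick is that disjointness of the numerical ranges is not \emph{equivalent} to $(\O+\Up)(\xi,\xi)\neq 0$ on $\D\setminus\{0\}$ when $\rn_{-\Up}$ ranges over unit vectors of all of $\H$ (it is merely sufficient), but you only use the implication you actually need, so the argument stands.
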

\begin{proof}
	We prove that $N(\O^*+\Up^*)=\{0\}$, so the conclusion follows by Lemma \ref{lem_crit}, noting that statements {\it 1} and {\it 2} are the (e) and the (c) in this lemma.\\
	If $\eta\in N(\O^*+\Up^*)$ then $0=(\O^*+\Up^*)(\eta,\xi)=\ol{(\O+\Up)(\xi,\eta)}$
	for all $\xi \in \D$. In particular, $\O(\eta,\eta)=-\Up(\eta,\eta)$ and therefore, from $\rn_\O\cap \rn_{-\Up}=\varnothing$, $\eta=0$.
\end{proof}

\begin{cor}
	\label{cor_crit_qc'}
	Let $\O$ be a q-closed sesquilinear form on $\D$ with respect to a norm $\noo$ with numerical range $\rn_\O$ and let $\lambda\notin \rn_\O$. Then $-\lambda \iota \in \Po(\O)$ if, and only if, one of the statements 1. or 2. in Theorem \ref{crit_gener_rn} holds with $\Up=-\lambda \iota$.
\end{cor}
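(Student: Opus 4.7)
The plan is to observe that this corollary is just Theorem \ref{crit_gener_rn} applied to the very special choice $\Up=-\lambda\iota$, so all the work reduces to checking that the hypothesis of that theorem is met, and then translating its conclusion verbatim. Concretely, I would split the argument into two short pieces: computing the numerical range $\rn_{-\Up}$, and invoking Theorem \ref{crit_gener_rn}.

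For the first piece, with $\Up=-\lambda\iota$ we have $-\Up=\lambda\iota$, so for every $\xi\in\D$ with $\n{\xi}=1$,
$$
(-\Up)(\xi,\xi)=\lambda\,\iota(\xi,\xi)=\lambda\,\pin{\xi}{\xi}=\lambda\n{\xi}^2=\lambda.
$$
Hence $\rn_{-\Up}=\{\lambda\}$, and the disjointness condition $\rn_\O\cap\rn_{-\Up}=\varnothing$ required by Theorem \ref{crit_gener_rn} becomes precisely $\lambda\notin\rn_\O$, which is the standing hypothesis of the corollary.

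For the second piece, once the hypothesis of Theorem \ref{crit_gener_rn} is verified for $\Up=-\lambda\iota$, the equivalence asserted there says exactly that $-\lambda\iota\in\Po(\O)$ if, and only if, one of the two conditions (1) or (2) in the statement of that theorem holds with $\Up=-\lambda\iota$, which is the conclusion we want. There is no real obstacle here: the only thing to be slightly careful about is to present the computation of $\rn_{-\Up}$ cleanly enough to make clear that the seemingly stronger assumption of Theorem \ref{crit_gener_rn} collapses to the single scalar condition $\lambda\notin\rn_\O$ in this setting.
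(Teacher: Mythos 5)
Your proposal is correct and is exactly the argument the paper intends: the corollary is stated without proof as the special case $\Up=-\lambda\iota$ of Theorem \ref{crit_gener_rn}, and your computation that $\rn_{-\Up}=\{\lambda\}$, so that the disjointness hypothesis collapses to $\lambda\notin\rn_\O$, is the only thing that needs to be checked.
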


The next criterion is a generalization of \cite[Theorem 5.11]{Tp_DB}.

\begin{teor}
	\label{th_cond_qc}
	Let $\O$ be a q-closed sesquilinear form on $\D$ with respect to a norm $\noo$ with numerical range $\rn_\O$. Suppose that the following condition is satisfied
	\begin{enumerate}
		\item[\emph{(qc)}] if $\{\xi_n\}$ is a sequence in $\D$ such that $\displaystyle \lim_{n\to \infty }\n{\xi_n}= 0$ and $\displaystyle\lim_{n\to \infty} |\O(\xi_n,\xi_n)|= 0$, then $\displaystyle\lim_{n\to \infty} \n{\xi_n}_\O=0$.
	\end{enumerate}
	Let, moreover, $\Up$ be a bounded sesquilinear form in $\H$, and let $\rn_{-\Up}$ be the numerical range of  $-\Up$. If $\ol{\rn_\O}\cap \ol{\rn_{-\Up}}=\varnothing$, then $\Up\in \Po(\O)$, hence $\O$ is solvable with respect to $\noo$. In particular, if $\lambda\notin \ol{\rn_\O}$, then $-\lambda \iota \in \Po(\O)$ and $\O$ is solvable with respect to $\noo$.
\end{teor}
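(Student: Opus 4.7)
The plan is to apply Theorem \ref{crit_gener_rn}, verifying its hypothesis and condition 1. First, since $\ol{\rn_\O}\cap\ol{\rn_{-\Up}}=\varnothing$ trivially forces $\rn_\O\cap\rn_{-\Up}=\varnothing$, the hypothesis of that theorem is met. It then suffices to show: if $\{\xi_n\}\subset\D$ satisfies $\sup_{\n{\eta}_\O=1}|(\O+\Up)(\xi_n,\eta)|\to 0$, then $\n{\xi_n}_\O\to 0$.

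I would argue by contradiction. Suppose the conclusion fails. Passing to a subsequence one may assume $\n{\xi_n}_\O\ge c>0$, and after replacing $\xi_n$ by $\xi_n/\n{\xi_n}_\O$ (which preserves the hypothesis up to the constant $1/c$), one may assume $\n{\xi_n}_\O=1$ for all $n$. Testing the supremum at $\eta=\xi_n$ gives $(\O+\Up)(\xi_n,\xi_n)\to 0$. Now I would split according to the behaviour of $\n{\xi_n}$ in $\H$.

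\emph{Case 1.} If $\liminf_n\n{\xi_n}>0$, extract a further subsequence with $\n{\xi_n}\ge\delta>0$ and set $\zeta_n:=\xi_n/\n{\xi_n}$, so $\n{\zeta_n}=1$. Then $(\O+\Up)(\zeta_n,\zeta_n)=(\O+\Up)(\xi_n,\xi_n)/\n{\xi_n}^2\to 0$; equivalently, the points $\O(\zeta_n,\zeta_n)\in\rn_\O$ and $-\Up(\zeta_n,\zeta_n)\in\rn_{-\Up}$ have difference tending to $0$. Since $\Up$ is bounded on $\H$, the set $\rn_{-\Up}$ is a bounded subset of $\C$, hence $\ol{\rn_{-\Up}}$ is compact. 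Disjointness of a closed set and a disjoint compact set forces positive distance, contradicting the convergence.

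\emph{Case 2.} If a subsequence satisfies $\n{\xi_n}\to 0$, then by boundedness of $\Up$ on $\H$ one has $|\Up(\xi_n,\xi_n)|\le\n{\Up}\,\n{\xi_n}^2\to 0$, so $\O(\xi_n,\xi_n)\to 0$. Condition (qc) now yields $\n{\xi_n}_\O\to 0$, contradicting $\n{\xi_n}_\O=1$. This contradiction closes the argument, so $\Up\in\Po(\O)$ and $\O$ is solvable with respect to $\noo$. The ``in particular'' clause is immediate: taking $\Up=-\lambda\iota$, one has $\rn_{-\Up}=\{\lambda\}$, and $\ol{\rn_\O}\cap\{\lambda\}=\varnothing$ is exactly $\lambda\notin\ol{\rn_\O}$. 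The main subtlety is that the closure assumption together with the boundedness of $\Up$ (which makes $\ol{\rn_{-\Up}}$ compact) is precisely what guarantees positive distance between the two closed sets, since in general disjoint closed subsets of $\C$ may have zero distance.
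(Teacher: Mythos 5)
Your proof is correct and follows essentially the same route as the paper: both reduce the claim to condition \emph{1} of Theorem \ref{crit_gener_rn}, using the positive distance between the closed set $\ol{\rn_\O}$ and the compact set $\ol{\rn_{-\Up}}$ to exclude sequences whose $\H$-norms stay bounded away from zero, and then condition (qc) to finish. The only difference is organizational: the paper argues directly with vectors normalized in the $\H$-norm, whereas you argue by contradiction with vectors normalized in $\noo$, which makes the step $(\O+\Up)(\xi_n,\xi_n)\to 0$ immediate.
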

\begin{proof}
	The hypotheses allow to apply Theorem \ref{crit_gener_rn}. We prove that the condition {\it 1} of that theorem holds.\\
	Let $\{\xi_n\}$ be a sequence in $\D$ such that $\displaystyle \sup_{\n{\eta}_\O=1} |(\O+\Up)(\xi_n,\eta)|\to 0$. We will show that $\n{\xi_n}_\O\to 0$. \\
	Let $\{\xi_{n_k}\}$ be the subsequence of $\{\xi_n\}$ which consists of all non zero elements.	If $\{\xi_{n_k}\}$ is a finite set then obviously $\n{\xi_n}_\O\to 0$. Otherwise, setting $\phi_{n_k}=\frac{\xi_{n_k}}{\n{\xi_{n_k}}}$, we have in particular
	\begin{equation}
	\label{pass_qc}
	|(\O+\Up)(\xi_{n_k},\phi_{n_k})|\to 0,	
	\end{equation}
	i.e.,
	$\n{\xi_{n_k}}|(\O+\Up)(\phi_{n_k},\phi_{n_k})|\to 0.$\\
	From $\ol{\rn_\O}\cap \ol{\rn_{-\Up}}=\varnothing$ and taking into account that $\n{\phi_{n_k}}=1$, there exists $d>0$ such that $d\leq |(\O+\Up)(\phi_{n_k},\phi_{n_k})|$ for all $k\in \N$. Then
	$$
	d\n{\xi_{n_k}}\leq \n{\xi_{n_k}}|(\O+\Up)(\phi_{n_k},\phi_{n_k})|\to 0.
	$$
	Moreover, from (\ref{pass_qc}), $|(\O+\Up)(\xi_{n_k},\xi_{n_k})|\to 0$, but $\Up(\xi_{n_k},\xi_{n_k})\to 0$ because $\Up$ is bounded and $\xi_{n_k}\to 0$, therefore $|\O(\xi_{n_k},\xi_{n_k})|\to 0$.\\	
	The condition (qc) implies that $\n{\xi_{n_k}}_\O\to 0$, and hence $\n{\xi_n}_\O\to 0$.
\end{proof}

This criterion applies, in particular, to closed sectorial forms, as said in \cite[Example 5.12]{Tp_DB}. But as shown in the following example, not every solvable sesquilinear form (whose numerical range is different from the whole complex plane) satisfies the condition (qc).

\begin{esm}[Example \ref{esm_form_reg_l_2(3)}]
	\label{esm_form_reg_l_2(4)}
	As we have seen, the form $\Oa$ for all sequences $\ull{\alpha}:=\{\alpha_n\}$ of complex numbers, is solvable with respect to the norm
	$$
	\n{\{\xi_n\}}_{\Oa}= \left (\sum_{n=1}^\infty |\xi_n|^2+\sum_{n=1}^\infty |\alpha_n| |\xi_n|^2\right )^\mez,
	$$
	and $\Up=-\lambda \iota\in \Po(\Oa)$, with $\lambda\notin \ol{\{\alpha_n:n\in \N\}}$.
	We show that, in general, the condition (qc) is not satisfied. Indeed, we suppose, for instance,  $\ull{\alpha}:=\{(-1)^nn\}$ (in such case $\Oa$ has numerical range $\R$) and let $\zeta_n:=\frac{1}{\sqrt{2n}}e_{2n}+\frac{1}{\sqrt{2n+1}}e_{2n+1}\in D(\Oa)$, where $\{e_n\}$ is the canonical basis of $\ll_2$. Then $\n{\zeta_n}\to 0$ and $\Oa(\zeta_n,\zeta_n)=0$,
	but $\n{\zeta_n}_{\Oa}$
	does not converge to $0$.\\
	It is possible to use also Corollary \ref{cor_crit_qc'} to prove that $\Up=-\lambda \iota$, with $\lambda\notin \ol{\{\alpha_n:n\in \N\}}$, belongs to $\Po(\Oa)$.
	Indeed, let $\xi\in \D$, so that $\xi=\{\xi_n\}$ and put $\chi:=\left \{\frac{\alpha_n-\lambda}{|\alpha_n|+1} \xi_n \right\}$. It follows that $\chi \in \D$ and $\xi=\left \{\frac{|\alpha_n|+1}{\alpha_n-\lambda} \chi_n\right \}$, hence by the boundness of the sequence $\left \{\frac{|\alpha_n|+1}{\alpha_n-\lambda}\right \}$, there exists $M>0$ such that
	$
	\n{\xi}_{\Oa}\leq M\n{\chi}_{\Oa}.
	$\\
	Moreover, denoting by $\pin{\cdot}{\cdot}_{\Oa}$ the inner product which induces the norm $\n{\cdot}_{\Oa}$,
	$$
	\pin{\chi}{\eta}_{\Oa} = \sum_{n=1}^\infty (|\alpha_n|+1)\chi_{n}\ol{\eta_n}
	= \sum_{n=1}^\infty (\alpha_n-\lambda)\xi_n\ol{\eta_n}
	= (\Oa-\lambda \iota)(\xi,\eta),
	$$
	so finally we obtain the condition {\it 2} of Theorem \ref{crit_gener_rn}, indeed
	$$\frac{1}{M}\n{\xi}_{\Oa}\leq\n{\chi}_{\Oa}  =\sup_{\n{\eta}_{\Oa}=1} \pin{\chi}{\eta}_{\Oa}= \sup_{\n{\eta}_{\Oa}=1} |(\Oa-\lambda \iota)(\xi,\eta)|.
	$$
\end{esm}

\section{Solvable sesquilinear forms with respect to an inner product}
\label{sec:sol_inn}

As we will see in next sections, solvable sesquilinear forms with respect to norms which are induced by inner products deserve particular attention. \\

\noindent Let $\O$ be a q-closed sesquilinear form with respect to a norm $\noo$ on $\D$, and suppose that $\noo$ is induced by an inner product $\pint_\O$. Since $\Eo:=\D[\pint_\O]$ is a Hilbert space and there exists $\alpha >0$ such that $\n{\xi}^2\leq \alpha \pin{\xi}{\xi}_\O$ for all $\xi\in \D$, we have that $\pint_\O$ is a closed positive sesquilinear form on $\D$. Hence, by Kato's First and Second Representation Theorems \cite[Ch. VI, Theorems 2.1, 2.23]{Kato}, there exists a positive self-adjoint operator $R$ on $\H$, with domain $D(R)\sub D(R^\mez)= \D$, and $0\in \rho(R)$, such that
\begin{align*}
\pin{\xi}{\eta}_\O &= \pin{R\xi}{\eta} \qquad \forall \xi\in D(R),\eta \in \D,\\
\pin{\xi}{\eta}_\O &= \pin{R^\mez\xi}{R^\mez\eta} \qquad \forall \xi,\eta \in \D.
\end{align*}
Moreover, $\O$ is bounded in $\Eo$, hence there exists a unique bounded operator $A\in \B(\Eo)$ such that
$$
\O(\xi,\eta)=\pin{A\xi}{\eta}_\O \qquad \forall \xi,\eta \in \D.
$$
Now, let $\Up$ be a bounded sesquilinear form in $\H$, then there exists a unique operator $B\in \B(\H)$ such that
$\Up(\xi,\eta)=\pin{B\xi}{\eta}$ for all $\xi, \eta \in \H.$\\
Therefore, taking into account that $0\in \rho(R)$,
$$
\Up(\xi,\eta)=\pin{B\xi}{\eta}=\pin{R^{-1}B\xi}{\eta}_\O \qquad \forall \xi,\eta \in \D,
$$
i.e., $R^{-1}B_{|\D}\in \B(\Eo)$ is the operator which represents the restriction $\Up_{|\D}$ of $\Up$ to $\D$, as sesquilinear form in $\Eo$. Hence
$$
(\O+\Up)(\xi,\eta)=\pin{(A+R^{-1}B_{|\D})\xi}{\eta}_\O \qquad  \forall \xi,\eta\in \D,
$$
i.e., $A+R^{-1}B_{|\D}\in \B(\Eo)$ is the operator which represents $\O+\Up$, as a sesquilinear form in $\Eo$.\\
Using conditions (\ref{cond_1_def_PO}) and (\ref{cond_2_def_PO}) one can prove the following lemma.

\begin{lem}
	\label{lem_op_Gram}
	$\Up\in \Po(\O)$ if, and only if, the operator which represents $\O+\Up$ in $\Eo$, i.e. $A+R^{-1}B_{|\D}$, is a bijection in $\D$.
\end{lem}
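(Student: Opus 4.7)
I would introduce the shorthand $C := A + R^{-1}B_{|\D} \in \B(\Eo)$; by construction
$$(\O+\Up)(\xi,\eta) = \pin{C\xi}{\eta}_\O \qquad \forall \xi,\eta \in \D.$$
The plan is to translate the two defining conditions of $\Po(\O)$, namely (\ref{cond_1_def_PO}) and (\ref{cond_2_def_PO}), into the injectivity and surjectivity of $C$ on $\D$, respectively. The key point that is available here (and unavailable in the Banach case of Lemma \ref{lem_biiez}) is that $\Eo$ is a genuine Hilbert space, so the Riesz representation theorem identifies $\Eo^\times$ with $\Eo$ via $\zeta \mapsto \pin{\zeta}{\cdot}_\O$.

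For the direction $\Rightarrow$, assume $\Up \in \Po(\O)$. If $C\xi=0$ for some $\xi \in \D$, the identity above gives $(\O+\Up)(\xi,\eta)=0$ for every $\eta \in \D$, so (\ref{cond_1_def_PO}) forces $\xi=0$ and $C$ is injective. For surjectivity I would fix an arbitrary $\zeta \in \Eo = \D$ and apply (\ref{cond_2_def_PO}) to the bounded conjugate-linear functional $\Lambda_\zeta := \pin{\zeta}{\cdot}_\O \in \Eo^\times$; this produces $\xi \in \D$ with $\pin{C\xi}{\eta}_\O = \pin{\zeta}{\eta}_\O$ for all $\eta \in \Eo$, and the non-degeneracy of the inner product on $\Eo$ gives $C\xi = \zeta$.

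For the direction $\Leftarrow$, assume $C$ is a bijection on $\D$. Injectivity immediately yields (\ref{cond_1_def_PO}): any $\xi \in N(\O+\Up)$ satisfies $\pin{C\xi}{\eta}_\O=0$ for all $\eta \in \Eo$, hence $C\xi=0$ and so $\xi=0$. For (\ref{cond_2_def_PO}), given $\Lambda \in \Eo^\times$ I would invoke Riesz on $\Eo$ to obtain $\zeta \in \D$ with $\pin{\Lambda}{\eta} = \pin{\zeta}{\eta}_\O$ for all $\eta \in \Eo$, and then use the surjectivity of $C$ to pick $\xi \in \D$ with $C\xi = \zeta$. The chain $(\O+\Up)(\xi,\eta) = \pin{C\xi}{\eta}_\O = \pin{\zeta}{\eta}_\O = \pin{\Lambda}{\eta}$ valid for every $\eta \in \Eo$ then verifies (\ref{cond_2_def_PO}).

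No step is genuinely hard; the whole argument is bookkeeping around the defining identity for $C$ and the Riesz representation on $\Eo$. The only care needed is to remember that $\D$ and $\Eo$ coincide as sets (they differ only by the norm), so that the vector "$\xi \in \Eo$" required by condition (\ref{cond_2_def_PO}) is the same as "$\xi \in \D$" required in the statement of the lemma, which is what makes the appeal to surjectivity on $\D$ legitimate.
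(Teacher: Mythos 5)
Your argument is correct and is exactly the verification the paper has in mind (the paper omits the proof, noting only that the lemma follows from conditions (\ref{cond_1_def_PO}) and (\ref{cond_2_def_PO})): injectivity of $A+R^{-1}B_{|\D}$ is equivalent to $N(\O+\Up)=\{0\}$, and surjectivity is equivalent to the representability of every $\Lambda\in\Eo^\times$, via the Riesz identification of $\Eo^\times$ with the Hilbert space $\Eo$. Your closing remark that $\D$ and $\Eo$ coincide as sets is the right point of care, and equivalently one could phrase the whole thing as the factorization $X_\Up = J\,(A+R^{-1}B_{|\D})$ with $J$ the Riesz isomorphism, reducing the lemma to Lemma \ref{lem_biiez}.
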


We suppose now that $\Up\in \Po(\O)$; then, in particular, $\O$ is solvable, and admits an associated operator $T$. By the previous lemma, we have that $A+R^{-1}B_{|\D}$ is a bijection in $\D$, and
$$
(\O+\Up)((A+R^{-1}B_{|\D})^{-1}\xi,\eta)=\pin{\xi}{\eta}_\O=\pin{R\xi}{\eta}
$$
for all $\xi\in D(R),\eta\in \D$.\\
Setting $D(T'):=(A+R^{-1}B_{|\D})^{-1}D(R)=D(RA)$ we define an operator $T'$, with domain $D(T')$, by $T':=R(A+R^{-1}B_{|\D})-B=RA$; then,
$$
(\O+\Up)(\xi,\eta)=\pin{R(A+R^{-1}B_{|\D})\xi}{\eta}=\pin{(RA+B)\xi}{\eta} \qquad \forall \xi\in D(T'),\eta \in \D,
$$
i.e.
\begin{equation}
\label{pass_form_prod_int}
\O(\xi,\eta)=\pin{RA\xi}{\eta}=\pin{T'\xi}{\eta} \qquad \forall \xi\in D(T'),\eta \in \D.
\end{equation}
We prove that $T'$ is exactly the operator $T$ associated to $\O$. From (\ref{pass_form_prod_int}) we have $T'\sub T$. Moreover, $T'+B=R(A+R^{-1}B_{|\D})$ is a bijective operator from $D(T')$ onto $\H$ and, by Theorem \ref{th_rapp_risol}, $T=T'=RA$.\\

\noindent To avoid confusion between the operators $T$ and $A$ which are both associated to $\O$, but in different spaces ($\H$ and $\Eo$), we say that $A$ is the {\it Gram operator} of $\O$ (with respect to $\pint_\O$). This terminology follows the one used in the theory of  indefinite inner product spaces (\cite[Ch. IV, par. 5]{Bognar}).

\begin{osr}
	At this point it is worth mentioning that there exist solvable sesquilinear forms with respect to norms not induced by inner products.\\
	Consider the sesquilinear form $\Oa$ of Examples $\ref{esm_form_reg_l_2(3)}$ and $\ref{esm_form_reg_l_2(4)}$. Let $p>2$, then the norm $\n{\cdot}_{\Oa}$ on $D(\Oa)$ is equivalent to the norm
	$$
	\n{\{\xi_n\}}_p=(|\xi_1|^p+|\xi_2|^p)^{\frac{1}{p}}+\left (\sum_{n=3}^\infty |\xi_n|^2 +\sum_{n=3}^\infty |\alpha_n||\xi_n|^2\right )^\mez,
	$$
	which is not induced by an inner product. By Theorem \ref{th_q_cl_sol_norm_eq}, $\Oa$ is solvable with respect to $\n{\cdot}_p$.
\end{osr}

\section{Comparisons with other representation theorems}
\label{sec:casi_part}

We have already established that closed sectorial form are solvable. But the converse is also true.

\begin{pro}
	\label{pro_solv->clos}
	Let $\O$ be a densely defined sectorial form with vertex $\gamma\in \R$, and $\lambda< \gamma$. $\O$ is closed sectorial if, and only if, it is solvable with $-\lambda \iota \in \Po(\O)$.
\end{pro}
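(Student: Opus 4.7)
For the forward direction, a densely defined closed sectorial form with vertex $\gamma$ is, by Kato's theory, q-closed with respect to the sectorial norm $\n{\xi}_s:=(\Re\O(\xi,\xi)-\lambda\n{\xi}^2)^\mez$, which is equivalent to the choice of Example~3.2 since $\gamma-\lambda>0$. Condition (qc) of Theorem \ref{th_cond_qc} is trivial for sectorial forms, because $|\Re\O(\xi_n,\xi_n)|\leq|\O(\xi_n,\xi_n)|$, so $\n{\xi_n}\to 0$ and $|\O(\xi_n,\xi_n)|\to 0$ force $\n{\xi_n}_s\to 0$. Since $\rn_\O\sub\{z\in\C:\Re z\geq\gamma\}$ and $\lambda<\gamma$, we have $\lambda\notin\ol{\rn_\O}$, and Theorem \ref{th_cond_qc} yields $-\lambda\iota\in\Po(\O)$.

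For the converse, assume $\O$ is sectorial with vertex $\gamma$ and some semi-angle $\theta$, solvable with respect to a norm $\noo$, and $-\lambda\iota\in\Po(\O)$. The plan is to prove that the sectorial norm $\n{\xi}_s:=(\Re\O(\xi,\xi)-\lambda\n{\xi}^2)^\mez$, which is genuinely a norm because $\Re\O(\xi,\xi)\geq\gamma\n{\xi}^2$ and $\gamma-\lambda>0$, is equivalent to $\noo$ on $\D$. Once this is shown, completeness of $\D[\noo]$ transfers to $\D[\n{\cdot}_s]$, which is exactly Kato's condition for $\O$ to be closed sectorial.

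One inequality, $\n{\xi}_s\leq C_1\n{\xi}_\O$, is immediate from q-closedness: $\Re\O(\xi,\xi)\leq|\O(\xi,\xi)|\leq\beta\n{\xi}_\O^2$ together with $\n{\xi}\leq\alpha\n{\xi}_\O$ yields $\n{\xi}_s^2\leq(\beta+|\lambda|\alpha^2)\n{\xi}_\O^2$. For the opposite inequality, $-\lambda\iota\in\Po(\O)$ gives via statement (c) of Lemma \ref{lem_crit} a constant $c>0$ with
\[
c\n{\xi}_\O\leq\sup_{\n{\eta}_\O=1}|(\O-\lambda\iota)(\xi,\eta)| \qquad \forall\xi\in\D.
\]
The sectoriality of $\O-\lambda\iota$ (with vertex $\gamma-\lambda>0$ and semi-angle $\theta$) supplies a polarization bound
\[
|(\O-\lambda\iota)(\xi,\eta)|\leq M\n{\xi}_s\n{\eta}_s \qquad \forall\xi,\eta\in\D,
\]
for some $M=M(\theta)>0$: Kato's standard estimate $|(\O-\gamma\iota)(\xi,\eta)|\leq(1+\tan\theta)\n{\xi}_s\n{\eta}_s$ (valid since $\Re(\O-\gamma\iota)(\xi,\xi)\leq\n{\xi}_s^2$) combined with $(\gamma-\lambda)|\pin{\xi}{\eta}|\leq\n{\xi}_s\n{\eta}_s$ (which uses $\n{\xi}^2\leq(\gamma-\lambda)^{-1}\n{\xi}_s^2$) does the job. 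Coupling this with $\n{\eta}_s\leq C_1\n{\eta}_\O$ gives $c\n{\xi}_\O\leq MC_1\n{\xi}_s$, and the equivalence of norms follows.

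The main technical point is the polarization bound for $\O-\lambda\iota$: the shift from the natural sectorial vertex $\gamma$ to the parameter $\lambda$ must be absorbed cleanly into the constants, but no substantive difficulty arises. A longer alternative route would first establish that the associated operator $T$ is m-sectorial, using that its numerical range is contained in $\ol{\rn_\O}$ by the proposition preceding Corollary \ref{cor_auto<->simm} and that $\lambda\in\rho(T)$ by point {\it 3} of Theorem \ref{th_rapp_risol}, then invoke Kato's theorem to obtain a closed sectorial form $\O'$ with associated operator $T$, and conclude $\O=\O'$ via Lemma \ref{lem_unic_op}.
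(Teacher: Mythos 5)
Your proposal is correct and follows essentially the same route as the paper: the forward direction is the content of Examples 5.8 and 5.12 of \cite{Tp_DB} (which the paper simply cites, and which you reprove via the condition (qc) of Theorem \ref{th_cond_qc}), and the converse is exactly the paper's argument, combining statement (c) of Lemma \ref{lem_crit} with Kato's polarization inequality for the sectorial form $\O-\lambda\iota$ to show that $\noo$ is equivalent to $(\Re\O-\lambda\iota)(\cdot,\cdot)^\mez$, whence completeness transfers and $\O$ is closed.
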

\begin{proof}
	$"\Rightarrow"$ See Examples 5.8 and 5.12 of \cite{Tp_DB}.\\
	$"\Leftarrow"$ Let $\noo$ be a norm with respect to which $\O$ is solvable. Then, we have, for some constant $M>0$ (see \cite[Ch. VI, Sec. 1]{Kato}),
	$$
	|(\O-\lambda\iota)(\xi,\eta)|\leq M(\Re \O-\lambda \iota)(\xi,\xi)^\mez(\Re \O-\lambda \iota)(\eta,\eta)^\mez \qquad \forall \xi,\eta \in \D,
	$$
	and, from Lemma \ref{lem_crit}, there exists $c>0$ such that
	$$
	c\n{\xi}_\O \leq \sup_{\n{\eta}_\O=1} |(\O-\lambda \iota)(\xi,\eta)| \qquad \forall \xi \in \D.
	$$
	Hence, putting $\displaystyle M':=M\sup_{\n{\eta}_\O=1} (\Re\O-\lambda \iota )(\eta,\eta)^\mez$,
	$$
	c\n{\xi}_\O\leq M'(\Re\O-\lambda \iota )(\xi,\xi)^\mez \qquad \forall \xi \in \D.
	$$
	But $\Re\O$ is bounded in $\D[\noo]$, therefore for some $d>0$,
	\begin{equation}
	\label{ReO_equiv}
	c\n{\xi}_\O \leq M'(\Re\O-\lambda \iota )(\xi,\xi)^\mez\leq d\n{\xi}_\O \qquad \forall \xi\in \D.
	\end{equation}
	Consider the norm $\n{\xi}_\O':=(\Re\O-\lambda \iota )(\xi,\xi)^\mez$ on $\D$. By (\ref{ReO_equiv}), $\nor_\O'$ and $\nor_\O$ are equivalent, so the space $\D[\nor_\O']$ is a Hilbert space and $\O$ is closed.			
\end{proof}

In the rest of this section we will show that other results known in the literature  can be derived from those for solvable sesquilinear forms.\\

\noindent A sesquilinear form  $\O$ with dense domain $\D$ in $\H$ is said to be {\it closed} in the sense of McIntosh \cite{McIntosh70',McIntosh70}, if the following conditions are satisfied:
\begin{enumerate}
	\item $\D$ is a Hilbert space with an inner product $\pint_\O$;
	\item the embedding $\D[\pint_\O]\hookrightarrow\H$ is continuous;
	\item $\O$ is bounded in $\D[\pint_\O]$;
	\item there exist $\lambda \in \C$ and an bijective operator $C\in\B(\D,\pint_\O)$ such that
	$$
	(\O-\lambda \iota)(\xi,\eta)=\pin{C\xi}{\eta}_\O \qquad \forall \xi,\eta \in \D.
	$$
\end{enumerate}

According the definition of a q-closed sesquilinear form and using Lemma \ref{lem_op_Gram} one can prove the following theorem.

\begin{teor}
	\label{th_Mc_2}
	Let $\O$ be a sesquilinear form on a dense domain $\D$ in $\H$. $\O$ is closed in the sense of McIntosh if, and only if, it is solvable with respect to an inner product and $-\lambda \iota\in \Po(\O)$ for some $\lambda\in\C$.
\end{teor}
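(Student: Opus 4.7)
The plan is to read McIntosh's four conditions as the Hilbert-space instance of q-closedness together with the characterization of $\Po(\O)$ given by Lemma~\ref{lem_op_Gram}, applied to the bounded form $\Up=-\lambda\iota$. The crucial observation, and really the only nontrivial step, is that McIntosh's operator $C$ coincides with the ``Gram operator'' representing $\O-\lambda\iota$ in the Hilbert space $\D[\pint_\O]$ described in Section~\ref{sec:sol_inn}, so that McIntosh's condition~4 translates word for word into the bijectivity criterion of that lemma.

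For the direction ``$\Leftarrow$'', suppose $\O$ is solvable with respect to an inner product $\pint_\O$ and $-\lambda\iota\in\Po(\O)$ for some $\lambda\in\C$. The q-closure of $\O$ with respect to the norm induced by $\pint_\O$ directly gives conditions~1--3 of McIntosh, using that a Hilbert space is reflexive. For condition~4 I would invoke the construction of Section~\ref{sec:sol_inn}: letting $R$ be the positive self-adjoint operator in $\H$ associated to $\pint_\O$, letting $A\in\B(\Eo)$ be the Gram operator of $\O$, and letting $B=-\lambda I$ represent $-\lambda\iota$ on $\H$, the operator $C:=A+R^{-1}B_{|\D}$ lies in $\B(\Eo)$ and satisfies $(\O-\lambda\iota)(\xi,\eta)=\pin{C\xi}{\eta}_\O$ for all $\xi,\eta\in\D$. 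Lemma~\ref{lem_op_Gram} then converts the assumption $-\lambda\iota\in\Po(\O)$ into the fact that $C$ is a bijection of $\D$ onto $\D$, which is exactly condition~4.

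For the direction ``$\Rightarrow$'', assume $\O$ is McIntosh closed with inner product $\pint_\O$ and parameters $(\lambda,C)$. Conditions~1--3 state that $\O$ is q-closed with respect to the norm induced by $\pint_\O$ (again using Hilbert $\Rightarrow$ reflexive), and $-\lambda\iota$ is bounded on $\H$. Running the construction of Section~\ref{sec:sol_inn} with this inner product, the form $\O-\lambda\iota$ has a unique representing operator in $\B(\Eo)$, namely $A+R^{-1}B_{|\D}$ with $B=-\lambda I$; by condition~4 this operator coincides with $C$ and is therefore a bijection on $\D$. Lemma~\ref{lem_op_Gram} now yields $-\lambda\iota\in\Po(\O)$, whence $\O$ is solvable with respect to $\pint_\O$. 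No real obstacle is expected: the whole argument is a dictionary between McIntosh's formulation and the Gram-operator language already set up in Section~\ref{sec:sol_inn}.
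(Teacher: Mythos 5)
Your proposal is correct and follows exactly the route the paper intends: the paper itself only remarks that the theorem follows ``according to the definition of a q-closed sesquilinear form and using Lemma~\ref{lem_op_Gram}'', and your argument fills in precisely those details, identifying McIntosh's conditions 1--3 with q-closedness with respect to the norm induced by $\pint_\O$ and his condition 4 with the bijectivity of the Gram operator of $\O-\lambda\iota$ via the uniqueness of the representing operator in $\B(\Eo)$. Nothing further is needed.
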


It is worth mentioning that McIntosh considers the more general case where the sesquilinear forms are defined in $X\times Y$ with $X$ and $Y$ possibly different spaces. The next example shows that there exist solvable sesquilinear forms, which are not closed in the sense of McIntosh.

\begin{esm}
	\label{esm_for_L2C_2}
	Let $\O$ be the sesquilinear form with domain
	$$
	\D:=\left \{f\in L^2(\C): \int_{\C}|x||f(x)|^2dx< \infty \right \}
	$$
	given by
	$ \displaystyle
	\O(f,g)=\int_\C xf(x)\ol{g(x)}dx$ for all $f,g \in \D$.\\
	As already seen in Example \ref{esm_for_L2C}, $\O$ is q-closed with respect to the norm
	$$ \n{f}_\O=\left (\int_{\C}(1+|x|)|f(x)|^2dx\right )^{\frac{1}{2}}.$$
	We prove that $\O$ is also solvable with respect to $\|\cdot\|_\Omega$. \\
	Let $B:L^2(\C)\to L^2(\C)$ be the bounded operator given by
	$$
	(Bf)(x)=\chi_\B(x)(1-x)f(x) \qquad x\in \C,
	$$
	where $\chi_\B$ is the characteristic function on the unit ball $\B$ centred at $0$ with radius $1$, and let
	$$
	\Up(f,g)=\pin{Bf}{g}=\int_{\C}\chi_\B(x)(1-x)f(x)\ol{g(x)}dx  \qquad \forall f,g\in L^2(\C).
	$$
	Hence, denoting by $\B^c$ the complement of $\B$ and  $r(x):=x\chi_{\B^c}(x) + \chi_\B(x)$,
	$$
	(\O+\Up)(f,g)=\int_{\C}r(x)f(x)\ol{g(x)}dx \qquad \forall f,g\in \D.
	$$
	We note that $0\notin \ol{\{r(x):x\in \C\}}$, $\frac{1}{|r(x)|}\leq 1$ and $\frac{|x|}{|r(x)|}\leq 1$ for all $x\in\C$.\\
	Let $f\in \D$ satisfy $(\O+\Up)(f,g)=0$ for all $g\in \D$. If $h\in L^2(\C)$, then $\frac{h}{\ol{r}}\in L^2(\C)$, but also $\frac{h}{\ol{r}}\in \D$, indeed
	$$
	\int_{\C}|x|\left |\frac{h}{\ol{r}}(x)\right |^2dx=\int_{\C}\frac{|x|}{|r(x)|}\frac{|h(x)|^2}{|r(x)|}dx\leq \int_{\C}|h(x)|^2dx<\infty;
	$$
	therefore, in particular,
	$$
	0=(\O+\Up)\left (f,\frac{h}{\ol{r}}\right )=\int_{\C}r(x)f(x)\ol{\frac{h}{\ol{r}}(x)}dx=\int_{\C}f(x)\ol{h(x)}dx
	$$
	for all $h\in L^2(\C)$, which implies $f=0$.\\
	Let $\Lambda$ be a bounded functional on the Hilbert space $\D[\noo]$, then by Riesz's Lemma, there exists $p\in \D$ such that
	$ \displaystyle
	\pin{\Lambda}{g}=\int_{\C} (1+|x|)p(x)\ol{g(x)}dx$ for all $g\in \D$. We set $f(x):=\frac{1+|x|}{r(x)}p(x)$. \\
	Since $\left |\frac{1+|x|}{r(x)}\right |\leq 2$ for all $x\in \C$, we have that $f\in \D$, and moreover,
	$$
	\pin{\Lambda}{g}=\int_{\C} (1+|x|)p(x)\ol{g(x)}dx=(\O+\Up)(f,g) \qquad \forall g\in \D.
	$$
	Hence, $\Up\in \Po(\O)$ and $\O$ is solvable. Now, we determine  the operator $T$ associated to $\O$. Let $\M_x$ be the multiplication operator on $L^2(\C)$ by $x$, with domain
	$$
	D(\M_x)=\left \{f\in L^2(\C):\int_{\C}|x|^2|f(x)|^2dx <\infty\right \}
	$$
	and given by $(\M_x f)(x)=xf(x)$ for all $f\in D(\M_x)$. If $f\in D(\M_x)$,  then
	$$
	2\int_{\C}|x||f(x)|^2dx\leq \int_{\C}|f(x)|^2dx+\int_{\C}|x|^2|f(x)|^2dx<\infty;
	$$
	hence, $f\in \D$ and
	$\O(f,g)=\pin{\M_x f}{g}$ for all $f\in D(\M_x), g\in \D
	$. Therefore, $\M_x\subseteq T$ by Theorem \ref{th_rapp_risol}. \\
	Moreover, from $0\notin \ol{\{r(x):x\in \C\}}$, we obtain $0\in \rho(\M_x+B)$; i.e., $\M_x+B$ has range $\H$. By Theorem \ref{th_rapp_risol} we conclude that $\M_x=T$.\\
	Note that $\O$ is solvable, but if $\lambda \in \C$ then $\Up'=-\lambda \iota\notin \Po(\O)$. Indeed, if $\Up'=-\lambda \iota\in \Po(\O)$, we would have $\lambda\in \rho(\M_x)$, but $\M_x$ has empty resolvent set.
\end{esm}

\begin{osr}
	\label{rem_unic_oper}
	In \cite[Proposition 4.2]{GKMV} it is shown that there exist different forms, that are closed in McIntosh's sense (and hence solvable), with the same associated operator. Moreover, the forms have not comparable domains. Therefore, Lemma \ref{lem_unic_op} does not hold, in general, without the hypothesis that the domains are equal. In particular, two different forms with the same associated operator are not comparable under the relation of extension.
\end{osr}

Now we prove a theorem on the relationship between solvable sesquilinear forms and Krein spaces (see \cite{Bognar}).

\begin{teor}
	\label{th_sp_Krein}
	Let $\O$ be a sesquilinear form defined on a dense domain $\D$ in $\H$, and let $\Up$ be a bounded sesquilinear form in $\H$, such that $\O+\Up$ is symmetric. Then, $\O$ is solvable with respect to an inner product and $\Up\in \Po(\O)$ if, and only if, the pair $(\D,\O+\Up)$ is a Krein space which is continuously embedded in $\H$.
\end{teor}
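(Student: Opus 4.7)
The plan is to prove both directions through the operator representation of $\O+\Up$ in the Hilbert space $\Eo$ from Section \ref{sec:sol_inn}. For the implication $(\Rightarrow)$, assume $\O$ is solvable with respect to an inner product $\pint_\O$ and $\Up\in\Po(\O)$. In $\Eo=\D[\pint_\O]$, the form $\O+\Up$ is represented by the bounded operator $G:=A+R^{-1}B_{|\D}\in\B(\Eo)$, i.e.\ $(\O+\Up)(\xi,\eta)=\pin{G\xi}{\eta}_\O$. The symmetry of $\O+\Up$ forces $G^*=G$, and Lemma \ref{lem_op_Gram} guarantees that $G$ is bijective; being bounded and self-adjoint, it then has bounded inverse, so $0\notin\sigma(G)$. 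By the continuous functional calculus I set $|G|:=(G^2)^{1/2}$ and $J:=\operatorname{sgn}(G)$, obtaining $J^*=J$, $J^2=I$, $|G|$ positive with bounded inverse, and $G=J|G|=|G|J$. The formula $[\xi,\eta]:=\pin{|G|\xi}{\eta}_\O$ defines an inner product on $\D$ whose induced norm is equivalent to $\noo$, and
\[
(\O+\Up)(\xi,\eta)=\pin{|G|J\xi}{\eta}_\O=[J\xi,\eta]
\]
exhibits $J$ as a fundamental symmetry for the indefinite inner product $\O+\Up$. Hence $(\D,\O+\Up)$ is a Krein space whose strong topology coincides with that of $\Eo$, which is continuously embedded in $\H$.

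For $(\Leftarrow)$, assume $(\D,\O+\Up)$ is a Krein space continuously embedded in $\H$. A fundamental decomposition supplies a Hilbert inner product $[\cdot,\cdot]$ on $\D$ and a fundamental symmetry $J$ with $(\O+\Up)(\xi,\eta)=[J\xi,\eta]$; write $\noo$ for the induced norm. I check that $\O$ is q-closed with respect to $\noo$: the embedding $\D[\noo]\hookrightarrow\H$ is continuous by hypothesis, $\D[\noo]$ is reflexive as a Hilbert space, and $\O=(\O+\Up)-\Up$ is bounded on $\D[\noo]$ since $[J\cdot,\cdot]$ is bounded and $\Up$ is bounded on $\H$ and hence on $\D[\noo]$ via the continuous embedding. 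To conclude $\Up\in\Po(\O)$ I invoke Lemma \ref{lem_biiez}: injectivity $N(\O+\Up)=\{0\}$ follows from the bijectivity of $J$, and surjectivity of $X_\Up:\Eo\to\Eo^\times$ follows because every $\L\in\Eo^\times$ admits a Riesz representation $\pin{\L}{\eta}=[\chi,\eta]$ for some $\chi\in\Eo$, and then $\xi:=J\chi$ satisfies $(\O+\Up)(\xi,\eta)=[J^2\chi,\eta]=[\chi,\eta]=\pin{\L}{\eta}$.

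The main obstacle is the polar-decomposition step in $(\Rightarrow)$: one must recognize that a bounded self-adjoint bijection $G$ on a Hilbert space encodes, via its sign and modulus, precisely the fundamental-symmetry and Hilbert-inner-product data of a Krein space with indefinite form $\pin{G\cdot}{\cdot}_\O$, and that the equivalence of $[\cdot,\cdot]$ with $\pint_\O$ is what transmits the continuous embedding into $\H$. Once this conceptual bridge is in place, the rest is routine verification via Lemmas \ref{lem_op_Gram} and \ref{lem_biiez}.
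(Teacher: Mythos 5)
Your proof is correct, but it takes a genuinely different (and more self-contained) route than the paper. The paper's argument is a two-line reduction: it invokes Bogn\'ar's characterization \cite[Ch. V, Theorem 1.3]{Bognar} --- $(\D,\O+\Up)$ is a Krein space if and only if there is a Hilbert inner product on $\D$ making $\O+\Up$ bounded with bijective Gram operator --- and then observes that, together with the definition of a q-closed form and Lemma \ref{lem_op_Gram}, this is exactly the statement to be proved. You instead reprove the relevant half of Bogn\'ar's theorem inline: in the forward direction you take the Gram operator $G=A+R^{-1}B_{|\D}$, note it is self-adjoint in $\Eo$ because $\O+\Up$ is symmetric and bijective by Lemma \ref{lem_op_Gram}, and extract the fundamental symmetry $J=\operatorname{sgn}(G)$ and the equivalent Hilbert inner product $[\xi,\eta]=\pin{|G|\xi}{\eta}_\O$ by functional calculus; in the backward direction you verify the two defining conditions of $\Po(\O)$ directly from a fundamental decomposition via the Riesz representation. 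Both directions check out (the equivalence of $[\cdot,\cdot]$ with $\pint_\O$, which you use to transport the continuous embedding into $\H$, follows from $|G|$ being positive with bounded inverse, and your verification of surjectivity of $X_\Up$ is exactly condition (\ref{cond_2_def_PO})). What the paper's approach buys is brevity and a clean delegation to the standard Krein-space literature; what yours buys is independence from that reference and an explicit exhibition of the fundamental symmetry in terms of the Gram operator, which makes the structural correspondence $G\leftrightarrow(J,[\cdot,\cdot])$ visible rather than cited.
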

\begin{proof}
	By \cite[Ch. V, Theorem 1.3]{Bognar}, $(\D,\O+\Up)$ is a Krein space if, and only if, an inner product $\pint_\O$ is defined on $\D$, so that $\D[\pint_\O]$ is a Hilbert space, $\O+\Up$ is bounded in $\D[\pint_\O]$ and the Gram operator of $\O+\Up$ with respect to $\pint_\O$ is a bijection of $\D[\pint_\O]$. Hence, by the definition of a q-closed form and by Lemma \ref{lem_op_Gram}, $(\D,\O+\Up)$ is a Krein space which is continuously embedded in $\H$ if, and only if, $\O$ is a solvable with respect to $\pint_\O$ and $\Up\in \Po(\O)$.
\end{proof}

We recall that, following instead Fleige, Hassi and de Snoo \cite{FHdeS}, a symmetric sesquilinear form defined in a dense subspace $\D$ of $\H$, is said to be {\it closed}, if, for some $\lambda\in \C$, the {\em gap point}, the pair $(\D,\O-\lambda \iota)$ is a Krein space, which is continuously embedded in $\H$. From Theorem \ref{th_sp_Krein} we have the following comparison between closed forms and solvable forms.

\begin{cor}
	Let $\O$ be a symmetric sesquilinear form defined on a dense domain $\D$  in $\H$, and let $\lambda\in \R$. $\O$ is closed, with gap point $\lambda$, if, and only if, $\O$ is solvable with respect to an inner product and $-\lambda \iota\in \Po(\O)$.
\end{cor}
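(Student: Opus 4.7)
The plan is to reduce the statement to a direct specialization of Theorem \ref{th_sp_Krein} by choosing the bounded perturbation $\Up := -\lambda\iota$. First I would observe that $\iota$ is a bounded symmetric sesquilinear form on $\H$ (it is the inner product), and that $\lambda \in \R$ is used precisely to guarantee that $\Up = -\lambda \iota$ is symmetric, whence $\O + \Up = \O - \lambda\iota$ is symmetric as a sum of two symmetric forms (here the hypothesis that $\O$ itself is symmetric is essential).

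With this choice, the hypotheses of Theorem \ref{th_sp_Krein} are fully met: $\O$ is a sesquilinear form on the dense domain $\D \subseteq \H$, $\Up$ is bounded in $\H$, and $\O + \Up$ is symmetric. Applying that theorem directly yields the equivalence
\[
\O \text{ is solvable w.r.t.\ an inner product and } -\lambda\iota \in \Po(\O)
\iff
(\D,\O - \lambda\iota) \text{ is a Krein space continuously embedded in } \H.
\]

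To close the argument, I would invoke the definition of closedness in the sense of Fleige, Hassi and de Snoo quoted just before the statement: a symmetric form $\O$ is closed with gap point $\lambda$ exactly when $(\D,\O-\lambda\iota)$ is a Krein space continuously embedded in $\H$. Thus the right-hand side of the equivalence above is verbatim the statement that $\O$ is closed with gap point $\lambda$, which proves the corollary.

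There is essentially no obstacle; the only point that requires care is verifying that Theorem \ref{th_sp_Krein} is applicable, that is, that $\O+\Up$ is symmetric. This uses both the assumption that $\O$ is symmetric and the assumption $\lambda \in \R$ (for complex $\lambda$ the form $-\lambda\iota$ would fail to be symmetric and the appeal to Theorem \ref{th_sp_Krein} would break).
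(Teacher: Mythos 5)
Your proposal is correct and is exactly the argument the paper intends: the corollary is presented as an immediate specialization of Theorem \ref{th_sp_Krein} with $\Up=-\lambda\iota$ (symmetric since $\lambda\in\R$), combined with the quoted definition of closedness in the sense of Fleige, Hassi and de Snoo. Nothing further is needed.
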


Consequently, Lemma 3.1 and Theorem 3.3 of \cite{FHdeS} are special cases of Theorems \ref{th_equiv_norm_q-chius},  \ref{th_rapp_risol} and of Corollaries \ref{cor_th_rapp_est}, \ref{cor_auto}. Proposition 3.2 in \cite{FHdeSW} corresponds to Theorem \ref{lem_unic_op}. The next example shows, instead, a symmetric solvable form, which is not closed in the sense of \cite{FHdeS}.

\begin{esm}
	Let $\H=L^2(\R)$. We define the symmetric sesquilinear form $\O$ with domain
	$$
	\D:=\left \{f\in L^2(\R): \int_{\R}|x||f(x)|^2dx< \infty \right \}
	$$
	putting
	$\displaystyle
	\O(f,g)=\int_\R xf(x)\ol{g(x)}dx
	$ for all $f,g\in \D$.\\
	$\O$ is solvable with respect to the norm (induced by an inner product) $\n{f}_\O=(\int_{\R}(1+|x|)|f(x)|^2dx)^\frac{1}{2}$, $f\in \D$. Indeed, in a way similar to Example \ref{esm_for_L2C}, we prove that the bounded sesquilinear form $\Up=-i\iota$ is in $\Po(\O)$. Moreover, the operator associated to $\O$ is the multiplication operator $\M_x$ by $x$ on $L^2(\R)$, with domain
	$$
	D(\M_x)=\left \{f\in L^2(\R): \int_{\R}x^2|f(x)|^2dx< \infty\right \}
	$$
	and given by
	$(\M_x f)(x)=xf(x)$ for all $f\in D(\M_x)$.\\
	However, $\O$ is not closed in the sense of \cite{FHdeS}, because if it were, with some gap point $\lambda\in \R$, then $\lambda$ would belong to the resolvent set of $\M_x$, which does not contain real numbers.
\end{esm}

We recall that an operator $T$ is  {\it accretive} if its numerical range is contained in the half-plane $\{\lambda\in \C:\Re \lambda \geq 0\}$, and  {\it maximal accretive} if it has no proper accretive extension. A characterization of maximal dissipative operators (i.e., the opposite of those maximal accretive) can be found in \cite{Phillips}.\\
The following theorem is the result of McIntosh in \cite{McIntosh68} on the representation of certain accretive sesquilinear forms (as for operators, accretive means that the numerical range is contained in the half-plane $\{\lambda\in \C:\Re \lambda \geq 0\}$). We prove that this is another special case of Theorem \ref{th_rapp_risol}.

\begin{teor}
	Let $\O$ be an accretive sesquilinear form on a dense domain $\D$  in $\H$. Suppose that $\D$ can be made into a Hilbert space (indicated by $\H_1$), with norm $\nor_1$, satisfying the following conditions
	\begin{enumerate}
		\item the embedding $\H_1\hookrightarrow \H$ is continuous;
		\item $\O$ is bounded in $\H_1$;
		\item if $\{\xi_n\}$ is a sequence in $\D$ such that, $\displaystyle \sup_{\n{\eta}_1=1} |(\O+\iota)(\xi_n,\eta)|\to 0$ then $\n{\xi_n}_1\to 0$.
	\end{enumerate}
	Then, there exists a maximal accretive operator $T$, with dense domain $D(T)\sub \D$ in $\H$, such that
	$\O(\xi,\eta)=\pin{T\xi}{\eta}$  for all $\xi\in D(T),\eta \in \D$.
\end{teor}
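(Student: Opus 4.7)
The plan is to reduce the theorem to Theorem \ref{th_rapp_risol} by showing that $\iota \in \Po(\O)$, and then to invoke the classical Lumer--Phillips characterization for maximal accretivity. First, I would observe that conditions 1 and 2 of the hypotheses, combined with the fact that $\H_1 = \D[\nor_1]$ is a Hilbert space (hence a reflexive Banach space), yield precisely that $\O$ is q-closed with respect to $\nor_1$, in the form of the proposition that simplifies Definition \ref{def_q_chiusa}.

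Next, I would verify that the bounded form $\iota$ belongs to $\Po(\O)$ by applying Lemma \ref{lem_crit} with $\Up = \iota$; it suffices to check statements (b) and (c). Statement (c) is exactly hypothesis 3, which is statement (e) of the lemma (equivalent to (c)). For statement (b), suppose $\eta \in N(\O^* + \iota)$. Then in particular $(\O^* + \iota)(\eta, \eta) = \ol{\O(\eta,\eta)} + \n{\eta}^2 = 0$, and taking real parts together with the accretivity of $\O$ gives $\Re \O(\eta, \eta) + \n{\eta}^2 = 0$, forcing $\eta = 0$. Hence $\iota \in \Po(\O)$, and $\O$ is solvable with respect to $\nor_1$.

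By Theorem \ref{th_rapp_risol} there is a closed operator $T$ with dense domain $D(T) \sub \D$ in $\H$ representing $\O$, and by item 3 of that theorem (with $\Up = \iota$, so $B = I$), $T + I$ is invertible with bounded inverse on $\H$; in particular $\Ran(T + I) = \H$. The accretivity of $T$ is immediate, since for $\xi \in D(T)$ we have $\Re \pin{T\xi}{\xi} = \Re \O(\xi,\xi) \geq 0$. Finally, by the Lumer--Phillips characterization (see \cite{Phillips}), a densely defined accretive operator whose range of $T + I$ is all of $\H$ has no proper accretive extension, so $T$ is maximal accretive. No step poses a genuine obstacle: the verification of (b) in Lemma \ref{lem_crit} is a two-line real-part computation, and the maximal accretivity is a direct application of a classical surjectivity criterion; the main conceptual point is simply recognizing hypothesis 3 of the theorem as exactly the solvability criterion (c)--(e) of Lemma \ref{lem_crit}.
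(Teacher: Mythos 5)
Your proof is correct and follows essentially the same route as the paper: establish q-closedness from hypotheses 1 and 2, use hypothesis 3 together with the accretivity (i.e.\ $-1\notin\rn_\O$) to get $\iota\in\Po(\O)$ via Lemma \ref{lem_crit}, and conclude maximal accretivity from the surjectivity of $T+I$ and the Phillips criterion. The only cosmetic difference is that you apply Lemma \ref{lem_crit} directly and verify condition (b) by hand, whereas the paper routes the same computation through Corollary \ref{cor_crit_qc'} and Theorem \ref{crit_gener_rn}.
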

\begin{proof}
	Hypotheses {\it 1, 2} imply that $\O$ is a q-closed sequilinear form with respect to $\n{\cdot}_1$. One can prove that $\O$ is solvable with respect to $\n{\cdot}_1$, using Corollary \ref{cor_crit_qc'}. Indeed, $-1\notin \rn_\O$, the numerical range of $\O$, and the third hypothesis in the statement is exactly the condition {\it 1} in Theorem \ref{crit_gener_rn}. So, the required properties follow from Theorem \ref{th_rapp_risol}, and $T$ is a maximal accretive operator by a corollary of \cite[Theorem 1.1.1]{Phillips}.
\end{proof}

Now consider the sesquilinear forms of the type
\begin{equation}
\label{form_QH1H2}
\O(\xi,\eta)=\pin{HA^\mez\xi}{A^\mez\eta} \qquad \forall \xi,\eta \in \D:=D(A^\mez),
\end{equation}
studied in \cite{GKMV}, where $H,A$ are self-adjoint, $H$ is bounded, $A$ is positive and $0\in \rho(H)\cap \rho(A)$. It is easy to see, using the definition, that $\O$ are solvable with respect to the norm  $\n{\xi}_\O=\n{A^\mez \xi}$ for all $\xi \in \D$, and $\Up\in \Po(\O)$, where $\Up=0$.\\

\noindent In  \cite{Schmitz} the sesquilinear forms considered are defined by
\begin{equation*}
\O(\xi,\eta)=\pin{HA^\mez\xi}{A^\mez\eta} \qquad \forall \xi,\eta \in \D:=D(A^\mez),
\end{equation*}
where the unique difference from above is that $A$ is only non-negative and self-adjoint, but the existence of a sef-adjoint involution $J$ with some property is required. As one can see in the proof of Theorem 2.3 of \cite{Schmitz}, such sesquilinear forms are perturbations of those defined by (\ref{form_QH1H2}) with the bounded form $\Up(\xi,\eta)=\pin{J\xi}{\eta}$ for all $\xi,\eta \in \H$. Hence, $\O$ are solvable and $\Up\in \Po(\O)$.\\	 

\noindent We recall that the spectrum of a closed operator is called {\it purely discrete} if it contains only eigenvalues of finite multiplicity which have no finite accumulation points (see \cite[Ch. 2]{Schm}). We prove now the following theorem.

\begin{teor}
	\label{th_compat}
	Let $\O$ be a solvable sesquilinear form on $\D$ with respect to an inner product $\pint_\O$ and let $T$ be its associated operator. Suppose that the embedding $\D[\pint_\O]\hookrightarrow \H$ is compact. \\
	Then, for all $A\in \B(\H)$, such that $T-A$ is invertible with bounded inverse $(T-A)^{-1}\in \B(\H)$, one has that $(T-A)^{-1}$ is compact. \\
	In particular, if $\lambda \in \rho(T)$, then $R_\lambda (T)=(T-\lambda I)^{-1}$ is compact. \\
	If the spectrum $\sigma(T)$  of $T$ is different from $\C$, then $\sigma(T)$ is purely discrete.	
\end{teor}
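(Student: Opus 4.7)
The plan is to reduce all three statements to a single factorisation: the operator $(T-A)^{-1}$ can be written as the composition of a bounded map $\H \to \Eo$ with the compact embedding $\Eo \hookrightarrow \H$, and a composition with a compact operator is compact. The key step is therefore statement $1$; statements $2$ and $3$ will follow by specialisation and standard spectral theory of compact operators.

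For statement $1$, let $A\in \B(\H)$ with $(T-A)^{-1}\in \B(\H)$. Since the range of $(T-A)^{-1}$ is $D(T)\sub \D=\Eo$ (as a set), the operator restricts to a map $(T-A)^{-1}\colon \H \to \Eo$. To show that this restriction is bounded I would invoke the Closed Graph Theorem. Suppose $\eta_n\to \eta$ in $\H$ and $(T-A)^{-1}\eta_n \to \xi$ in $\Eo$: continuity of the embedding $\Eo\hookrightarrow \H$ (which is built into the definition of a q-closed form) gives $(T-A)^{-1}\eta_n \to \xi$ also in $\H$, while boundedness of $(T-A)^{-1}$ on $\H$ gives $(T-A)^{-1}\eta_n \to (T-A)^{-1}\eta$ in $\H$; hence $\xi = (T-A)^{-1}\eta$, so the graph is closed. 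Therefore $(T-A)^{-1}\colon \H\to \Eo$ is bounded, and the compactness of the embedding $\Eo \hookrightarrow \H$ yields compactness of $(T-A)^{-1}$ viewed as an operator on $\H$.

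Statement $2$ is obtained by taking $A=\lambda I$. For statement $3$, assume $\sigma(T)\neq \C$ and pick $\lambda_0\in \rho(T)$: by statement $2$, $R_{\lambda_0}(T)$ is a compact operator on $\H$, so by the Riesz–Schauder theorem its nonzero spectrum consists of eigenvalues of finite multiplicity with no accumulation point other than $0$. The spectral mapping $\lambda\mapsto (\lambda-\lambda_0)^{-1}$ is a homeomorphism between $\sigma(T)$ and $\sigma(R_{\lambda_0}(T))\setminus\{0\}$ which preserves eigenspaces (indeed $R_{\lambda_0}(T)\xi=\mu \xi$ with $\mu\neq 0$ is equivalent to $T\xi=(\lambda_0+\mu^{-1})\xi$), so it transfers both finite multiplicity and the absence of finite accumulation points from $\sigma(R_{\lambda_0}(T))\setminus\{0\}$ to $\sigma(T)$. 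The main obstacle is the Closed Graph step of statement $1$: there is no direct estimate of $\n{(T-A)^{-1}\eta}_\O$ in terms of $\n{\eta}$ available, but the two a priori continuities (the embedding $\Eo\hookrightarrow \H$ and the boundedness of $(T-A)^{-1}$ on $\H$) are precisely what one needs to close the graph and conclude.
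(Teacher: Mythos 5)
Your argument is correct, and it takes a genuinely different route from the paper's. The paper works through the Kato representation of the inner product $\pint_\O$: it takes the positive self-adjoint operator $R$ with $\n{\xi}_\O=\n{R^{\mez}\xi}$, deduces from the compactness of the embedding that $R^{-\mez}$ is compact, writes $T+B=RC_{|D(T)}$ with $C$ the bijective operator representing $\O+\Up$ in $\Eo$, factors $(T+B)^{-1}=C_{|D(T)}^{-1}R^{-\mez}\cdot R^{-\mez}$, and only then passes from the particular inverse $(T+B)^{-1}$ to an arbitrary $(T-A)^{-1}$ via the resolvent-type identity $R_A(T)(A+B)R_{-B}(T)=R_A(T)-R_{-B}(T)$. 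Your Closed Graph argument replaces all of this: since $(T-A)^{-1}$ maps $\H$ into $D(T)\sub\D$ and both $\H$ and $\Eo=\D[\pint_\O]$ are complete, the two continuities you invoke do close the graph of $(T-A)^{-1}\colon\H\to\Eo$, and compactness follows by composing with the compact embedding. This is shorter, handles every admissible $A$ at once without the resolvent identity, and --- worth noting --- never uses that $\noo$ is induced by an inner product, so it actually proves the statement for any solvable form whose space $\D[\noo]$ embeds compactly in $\H$. What the paper's route buys is the explicit factorisation through $R^{-\mez}$ (and the compactness of $R^{-\mez}$ itself), which fits the machinery of Section \ref{sec:sol_inn}. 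For the final assertion the paper simply cites Proposition 2.11 of \cite{Schm}, whereas you reprove it via the spectral mapping $\lambda\mapsto(\lambda-\lambda_0)^{-1}$; both are sound.
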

\begin{proof}
	Let $\noo$ be the norm induced by $\pint_\O$, and $\Eo=\D[\noo]$. As we have seen in Section \ref{sec:sol_inn}, there exists a positive self-adjoint operator $R$, with bounded inverse, such that $\D=D(R^\frac{1}{2})$ and $\n{\xi}_\O=\n{R^\frac{1}{2}\xi}$ for all $ \xi \in \D$.\\
	Hence, since $R^{-\frac{1}{2}}\in \B(\H)$,
	\begin{equation}
	\label{pas_comp}
	\n{R^{-\frac{1}{2}}\eta}_\O=\n{\eta} \qquad \forall \eta \in \H.
	\end{equation}
	The compactness of the embedding $\Eo\hookrightarrow \H$ and (\ref{pas_comp}) imply that $R^{-\frac{1}{2}}$ is a compact operator. Indeed, if $\{\xi_n\}\subset D(R^{-\frac{1}{2}})=\H$ is a bounded sequence in $\H$, then since $\n{R^{-\frac{1}{2}}\xi_n}_\O=\n{\xi_n},$
	the sequence  $\{R^{-\frac{1}{2}}\xi_n\}$ is bounded in $\Eo$, therefore $\{R^{-\frac{1}{2}}\xi_n\}$ admits a subsequence which converges in $\H$.\\
	$T$ is the associated operator to $\O$, hence (see again Section \ref{sec:sol_inn}) there exist $B\in \B(\H)$ and an isomorphism $C\in \B(\Eo)$ such that $T+B=RC_{|D(T)}.$\\
	Thus
	\begin{equation}
	\label{pas_comp2}
	(T+B)^{-1}=C_{|D(T)}^{-1}R^{-1}=C_{|D(T)}^{-1}R^{-\frac{1}{2}}R^{-\frac{1}{2}}.
	\end{equation}
	The operator $C_{|D(T)}^{-1}R^{-\frac{1}{2}}$ is bounded in $\H$, because
	$$
	\n{C_{|D(T)}^{-1}R^{-\frac{1}{2}} \xi}\leq \alpha \n{C_{|D(T)}^{-1}R^{-\frac{1}{2}} \xi}_\O\leq \alpha \delta \n{R^{-\frac{1}{2}} \xi}_\O=\alpha \delta \n{\xi} \qquad \forall \xi \in \H,
	$$
	(we have used the facts that $\n{\cdot}\leq\alpha \noo$ and that $\n{{C^{-1}\xi}}_\O\leq \delta \n{\xi}_\O$, with certain constants $\alpha,\delta$). Hence from (\ref{pas_comp2}) and from the compactness of $R^{-\mez}$ we see that $(T+B)^{-1}$ is compact.\\
	Let $A\in \B(\H)$ be such that $T-A$ is invertible with bounded inverse $(T-A)^{-1}\in \B(\H)$. Put, for simplicity of notation, $R_A(T):=(T-A)^{-1}$, hence $R_{-B}(T)=(T+B)^{-1}$. We have the following relation (which is analogous to the classical one between resolvent operators)
	$$
	R_A(T)(A+B)R_{-B}(T)= R_A(T)(T+B-(T-A))R_{-B}(T)=R_A(T)-R_{-B}(T),
	$$
	which shows that $R_A(T)$ is compact.\\
	Suppose now that the spectrum $\sigma(T)$ is different from $\C$, and let $\lambda \in \rho(T)$. Then $R_\lambda (T)=(T-\lambda I)^{-1}$ is compact, and from \cite[Proposition 2.11]{Schm} it follows that $\sigma(T)$ is purely discrete.
\end{proof}

\begin{cor}
	In the hypothesis of Theorem \ref{th_compat} and assuming $\H$ is infinite dimensional, if $\lambda \in \rho(T)$ and the resolvent $R_\lambda(T)$ is normal, then there exist a sequence $\{\mu_n: n \in \N\}$ of eigenvalues of $T$, such that $\displaystyle \lim_{n\to\infty}|\mu_n| = +\infty$, and an orthonormal basis $\{\xi_n : n \in \N\}$ of
	$\H$ which consists of eigenvectors of $T$.
\end{cor}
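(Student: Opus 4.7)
The plan is to leverage Theorem \ref{th_compat} to obtain compactness of $R_\lambda(T)$, combine it with the assumed normality, apply the spectral theorem for compact normal operators on an infinite-dimensional Hilbert space, and finally transport the resulting spectral decomposition back to $T$ via the standard correspondence between eigenvalues of the resolvent and those of $T$.

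More precisely, I would first invoke Theorem \ref{th_compat} to conclude that $R_\lambda(T) = (T-\lambda I)^{-1}$ is compact on $\H$. By hypothesis it is also normal. Since $R_\lambda(T)$ is, by definition of the resolvent, a bijection from $\H$ onto $D(T)$, it is in particular injective, so $\ker R_\lambda(T) = \{0\}$, and by normality also $\ker R_\lambda(T)^* = \ker R_\lambda(T) = \{0\}$, hence $\overline{\mathrm{Ran}\, R_\lambda(T)} = \H$. Now I apply the spectral theorem for compact normal operators on a Hilbert space (in infinite dimension $\H$): there exist an orthonormal basis $\{\xi_n : n \in \N\}$ of $\H$ consisting of eigenvectors of $R_\lambda(T)$, with associated eigenvalues $\{\nu_n : n\in \N\}\subset \C$ satisfying $\nu_n \neq 0$ for every $n$ (by injectivity) and $\lim_{n\to\infty} \nu_n = 0$ (since $\H$ is infinite dimensional and $R_\lambda(T)$ is compact).

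The final step is to translate this into a statement about $T$. For each $n$, from $R_\lambda(T)\xi_n=\nu_n\xi_n$ and $\nu_n \neq 0$ it follows that $\xi_n = \nu_n^{-1} R_\lambda(T)\xi_n \in \mathrm{Ran}\, R_\lambda(T) = D(T)$, and applying $T-\lambda I$ on both sides of $R_\lambda(T)\xi_n=\nu_n\xi_n$ gives $\xi_n = \nu_n (T-\lambda I)\xi_n$, that is,
\[
T\xi_n = \mu_n \xi_n, \qquad \text{with } \mu_n := \lambda + \nu_n^{-1}.
\]
Hence the $\xi_n$ are eigenvectors of $T$. Since $\nu_n \to 0$, we have $|\mu_n| = |\lambda + \nu_n^{-1}| \to +\infty$, which yields the remaining assertion.

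There is no real obstacle here: the whole statement is a standard consequence of the compact normal spectral theorem once Theorem \ref{th_compat} has delivered compactness of $R_\lambda(T)$. The only point requiring a touch of care is verifying that $R_\lambda(T)$ has dense range (so that the ONB of its eigenvectors is a basis of $\H$ rather than only of $\overline{\mathrm{Ran}\, R_\lambda(T)}$), but this follows immediately from injectivity plus normality as indicated.
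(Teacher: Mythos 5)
Your proof is correct, and it follows the same route as the paper: the paper's entire proof is a citation of Theorem A.4 of Schm\"udgen's book (compact normal resolvent $\Rightarrow$ eigenbasis of $\H$ and eigenvalues of $T$ tending to infinity in modulus), whereas you simply unpack that cited result via the spectral theorem for compact normal operators, using Theorem \ref{th_compat} for compactness exactly as the paper does. The one point you handle explicitly --- density of $\mathrm{Ran}\,R_\lambda(T)$ via injectivity plus normality, so that the eigenvectors span all of $\H$ --- is indeed the only place needing care, and your treatment of it is right.
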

\begin{proof}
	This fact is a consequence of \cite[Theorem A.4]{Schm}.
\end{proof}

\begin{osr}
	Let $\O$ be a solvable sesquilinear form on $\D$ with respect to a norm $\noo$, and with associated operator $T$. Assume that $\lambda$ and $\xi$ are an eigenvalue and a corresponding eigenvector of $T$, respectively. Then, they are an eigenvalue and a corresponding eigenvector of $\O$, respectively; in other words,	$\O(\xi,\eta)=\lambda \pin{\xi}{\eta}$ for all $\eta \in \D$.\\
	The converse is also true. Indeed, if $\lambda$ and $\xi$ are an eigenvalue and a corresponding eigenvector of $\O$, respectively, then, by Corollary \ref{cor_th_rapp_est}, we have $\xi\in D(T)$  and $T\xi=\lambda\xi$.\\
	Hence, the statement of the previous corollary can be formulated considering eigenvalues and eigenvectors of the form.
\end{osr}

\begin{defin}
	A q-closed sesquilinear form $\O$ on $\D$ with respect to a norm $\noo$ is called {\it coercive} on $\D[\noo]$ if there exists $\gamma >0$ such that $|\O(\xi,\xi)|\geq\gamma \n{\xi}_\O^2$ for all $\xi\in \D$.
\end{defin}

The following facts on coercive sesquilinear forms hold.

\begin{teor}
	\label{th_coer}
	Let $\O$ be a q-closed sesquilinear form on $\D$ with respect to a norm $\noo$. If there exists a bounded form $\Up$ on $\H$ such that $\O+\Up$ is coercive on $\D[\noo]$, then $\O$ is solvable, and in particular, $\Up\in \Po(\O)$.
\end{teor}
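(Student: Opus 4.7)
My plan is to deduce the conclusion directly from Lemma \ref{lem_crit} by extracting from coercivity both of the one-sided lower bounds (c) and (d) appearing there. The equivalence of the four statements at the end of Lemma \ref{lem_crit} then gives $\Up \in \Po(\O)$, which in turn means $\O$ is solvable with respect to $\noo$.

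The key observation is that coercivity is a lower bound on the diagonal $(\O+\Up)(\xi,\xi)$, and the diagonal value can be recovered by testing $(\O+\Up)(\xi,\eta)$ against $\eta = \xi/\n{\xi}_\O$ (assuming $\xi \neq 0$; the zero case is trivial). Concretely, if $\gamma > 0$ is the coercivity constant, then for every nonzero $\xi \in \D$ one has
$$
\gamma \n{\xi}_\O^2 \leq |(\O+\Up)(\xi,\xi)| = \n{\xi}_\O \left|(\O+\Up)\!\left(\xi, \tfrac{\xi}{\n{\xi}_\O}\right)\right| \leq \n{\xi}_\O \sup_{\n{\eta}_\O = 1} |(\O+\Up)(\xi,\eta)|,
$$
so dividing by $\n{\xi}_\O$ yields condition (c) of Lemma \ref{lem_crit} with constant $c_1 = \gamma$. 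By an entirely symmetric calculation, writing $(\O+\Up)(\eta,\eta) = \n{\eta}_\O (\O+\Up)(\eta/\n{\eta}_\O, \eta)$, one obtains condition (d) with constant $c_2 = \gamma$.

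Having established both (c) and (d), item 4 in the equivalences of Lemma \ref{lem_crit} applies, and we conclude $\Up \in \Po(\O)$; consequently $\O$ is solvable with respect to $\noo$ by definition.

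I do not foresee any real obstacle: the argument is essentially a one-line manipulation together with an appeal to the already-proved Lemma \ref{lem_crit}. The only thing one must be careful about is the trivial case $\xi = 0$ (where the inequality is automatic) and the use of the Banach-space duality pairing between $\Eo$ and $\Eo^\times$ implicit in writing the supremum over $\n{\eta}_\O = 1$, but both are handled by the definition of q-closedness and the framework set up before Lemma \ref{lem_crit}.
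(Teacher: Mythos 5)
Your proof is correct and follows essentially the same route as the paper: both arguments reduce to Lemma \ref{lem_crit} and obtain the lower bound(s) by testing $(\O+\Up)$ against the normalized vector $\xi/\n{\xi}_\O$. The only (cosmetic) difference is that the paper verifies the pair (a) and (d) — noting that coercivity trivially forces $N(\O+\Up)=\{0\}$ — whereas you verify (c) and (d); either pair suffices by the stated equivalences.
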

\begin{proof}
	Since there exists $\gamma>0$ such that $|(\O+\Up)(\eta,\eta)|\geq\gamma \n{\eta}_\O^2$ for all $\eta \in \D$, we have that $N(\O+\Up)=\{0\}$. Moreover, for all $\eta \in \D$
	\begin{equation}
	\label{pass_coerc}
	\gamma\n{\eta}_\O\leq\sup_{\n{\xi}_\O=1} |(\O+\Up)(\xi,\eta)|.
	\end{equation}
	Indeed, if $\eta=0$ then (\ref{pass_coerc}) is obvious. If $\eta\neq 0$ then
	$$
	\gamma\n{\eta}_\O= \frac{\gamma\n{\eta}_\O^2}{\n{\eta}_\O}\leq \frac{ |(\O+\Up)(\eta,\eta)|}{\n{\eta}_\O}\leq \sup_{\n{\xi}_\O=1} |(\O+\Up)(\xi,\eta)|.
	$$
	Hence, by Theorem  \ref{lem_crit}, $\Up\in \Po(\O)$ and $\O$ is solvable.
\end{proof}

\begin{cor}
	\label{cor_coerc}
	A q-closed sesquilinear form $\O$ on $\D$ with respect to a norm $\noo$ and coercive on $\D[\noo]$ is solvable, and $0\in \rho(T)$, the resolvent set of the operator $T$ associated to $\O$.
\end{cor}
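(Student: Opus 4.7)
The plan is to deduce this corollary as a direct application of Theorem \ref{th_coer}, combined with point {\it 3} of Theorem \ref{th_rapp_risol}. First, I would apply Theorem \ref{th_coer} with the choice of bounded sesquilinear form $\Up = 0$ on $\H$ (the identically zero form). The coercivity hypothesis on $\O$ gives a constant $\gamma > 0$ such that $|\O(\xi,\xi)| \geq \gamma \n{\xi}_\O^2$ for all $\xi \in \D$, which is exactly the statement that $\O + 0$ is coercive on $\D[\noo]$. Hence Theorem \ref{th_coer} yields at once that $\O$ is solvable with respect to $\noo$ and that the zero form belongs to $\Po(\O)$.

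Next, to obtain $0 \in \rho(T)$, I would invoke point {\it 3} of Theorem \ref{th_rapp_risol} with $\Up = 0$. The bounded operator $B \in \B(\H)$ such that $\Up(\xi,\eta) = \pin{B\xi}{\eta}$ for all $\xi,\eta \in \H$ is then simply the zero operator, so the theorem guarantees that $T + B = T$ is invertible with bounded inverse $(T)^{-1} \in \B(\H)$; this is precisely the condition $0 \in \rho(T)$. Equivalently, one may write $\Up = 0 = -0 \cdot \iota$ and quote the "in particular" clause of point {\it 3}, which yields $0 \in \rho(T)$ directly.

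No genuine obstacle arises: the whole argument reduces to checking that the zero form is an admissible choice in the hypothesis of Theorem \ref{th_coer}, which it trivially is, and then reading off the conclusion from the representation theorem. The statement is therefore essentially a specialisation of Theorem \ref{th_coer} to the case where no perturbation is needed to achieve coercivity.
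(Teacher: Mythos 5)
Your proof is correct and is exactly the intended argument: the paper states this corollary without proof precisely because it is the specialisation of Theorem \ref{th_coer} to $\Up=0$, followed by the ``in particular'' clause of point \emph{3} of Theorem \ref{th_rapp_risol} with $\Up=-0\cdot\iota$. Nothing is missing.
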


In general there is no relations between the concept of $j$-{\it elliptic} form in \cite{Arendt} and that of {\it solvable} form, because a $j$-elliptic form is defined on a Hilbert space $(V,\nor_V)$, where $V$ need not be a subspace of $\H$. \\
Let us consider the case when $V$ is a dense subspace of $\H$ and $j:V\to \H$ is the embedding. Let $\O$ be a $j$-{\it elliptic} sesquilinear form on $V$ ($\O$ is q-closed with respect to $\nor_V$), then there exists $\omega \in \R, \mu> 0$ such that
$\Re \O(\xi,\xi)+\omega \n{\xi}^2\geq \mu \n{\xi}_V^2$ for all $\xi \in V$.
Therefore,
$$
|\O(\xi,\xi)+\omega \n{\xi}^2|\geq \mu \n{\xi}_V^2 \qquad \forall \xi \in V,
$$
and, by Theorem \ref{th_coer}, $\O$ is solvable and $\omega \iota \in \Po(\O)$.\\

\noindent We return again to the problem of forms with the same associated operators (see Lemma \ref{lem_unic_op} and Remark \ref{rem_unic_oper}).

\begin{pro}
	\label{pro_form_coer_1_oper}
	Let $\O_1$ be a solvable sesquilinear form on $\D_1$ with respect to a norm $\nor_1$ and let $\O_2$ be a solvable sesquilinear form on $\D_2$ with respect to $\nor_2$. If $\O_1$ and $\O_2$ are coercive in $\D_1[\nor_1]$ and in $\D_2[\nor_2]$, respectively, and they have the same associated operator, then $\D_1=\D_2$ and $\O_1=\O_2$.
\end{pro}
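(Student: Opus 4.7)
The plan is to reduce the statement to Lemma \ref{lem_unic_op} by establishing $\D_1 = \D_2$. To that end I will show that the norms $\nor_1$ and $\nor_2$ are equivalent on the common subspace $D(T)$, where $T$ is the shared associated operator, and then combine this with the density of $D(T)$ in both $\Ee_1 := \D_1[\nor_1]$ and $\Ee_2 := \D_2[\nor_2]$.

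First, since $\O_1$ and $\O_2$ are coercive, Corollary \ref{cor_coerc} gives that each $\O_i$ is solvable with $0 \in \Po(\O_i)$, and in particular $0 \in \rho(T)$. By Theorem \ref{th_rapp_risol} one then has $D(T) \sub \D_1 \cap \D_2$, $D(T)$ is dense in both $\Ee_1$ and $\Ee_2$, and the representation $\O_i(\xi, \eta) = \pin{T\xi}{\eta}$ holds for all $\xi \in D(T)$ and $\eta \in \D_i$ ($i=1,2$).

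The crucial observation is that, for each $\xi \in D(T)$, the diagonal values $\O_1(\xi,\xi)$ and $\O_2(\xi,\xi)$ both coincide with $\pin{T\xi}{\xi}$. Combining the coercivity of $\O_1$ with the continuity of $\O_2$ (and vice versa) therefore yields
\[
\gamma_1 \n{\xi}_1^2 \leq |\O_1(\xi,\xi)| = |\O_2(\xi,\xi)| \leq \beta_2 \n{\xi}_2^2, \quad \gamma_2 \n{\xi}_2^2 \leq |\O_2(\xi,\xi)| = |\O_1(\xi,\xi)| \leq \beta_1 \n{\xi}_1^2,
\]
where $\gamma_i, \beta_i$ are the coercivity and continuity constants of $\O_i$. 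Hence $\nor_1$ and $\nor_2$ are equivalent on $D(T)$.

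Given $\xi \in \D_1$, pick by density a sequence $\{\xi_n\} \sub D(T)$ with $\n{\xi_n - \xi}_1 \to 0$. This sequence is Cauchy in $\nor_1$ and, by the equivalence of the two norms on $D(T)$, also in $\nor_2$; since $\Ee_2$ is complete, it converges to some $\xi' \in \D_2$ in $\nor_2$. The continuous embeddings $\Ee_i \hookrightarrow \H$ then force $\xi_n \to \xi$ and $\xi_n \to \xi'$ in $\H$, so $\xi = \xi' \in \D_2$, i.e.\ $\D_1 \sub \D_2$. Swapping the roles of the two forms gives the opposite inclusion, hence $\D_1 = \D_2$; Lemma \ref{lem_unic_op} then yields $\O_1 = \O_2$. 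The main (and essentially only) obstacle is spotting the right bridge between $\nor_1$ and $\nor_2$: once one notices that the two forms literally agree on $D(T)\times D(T)$ via $T$, the interplay of coercivity of one form with continuity of the other on that common subspace delivers the equivalence almost immediately, without any a priori compatibility hypothesis between $\Ee_1$ and $\Ee_2$.
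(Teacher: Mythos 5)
Your proof is correct and follows essentially the same route as the paper: both exploit that $\O_1(\xi,\xi)=\pin{T\xi}{\xi}=\O_2(\xi,\xi)$ on $D(T)$, combine the coercivity of one form with the boundedness of the other to get equivalence of $\nor_1$ and $\nor_2$ on $D(T)$, and then use the density of $D(T)$ in both $\D_1[\nor_1]$ and $\D_2[\nor_2]$ together with Lemma \ref{lem_unic_op} to conclude. You merely spell out the completeness/embedding argument for $\D_1=\D_2$ that the paper leaves implicit.
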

\begin{proof}
	There exist constants $\beta_1,\beta_2,\gamma_1,\gamma_2>0$ such that
	$$
	\gamma_1\n{\xi}_1^2\leq |\O_1(\xi,\xi)|\leq \beta_1 \n{\xi}_1^2 \qquad \forall \xi \in \D_1
	$$
	$$
	\gamma_2\n{\eta}_2^2\leq |\O_2(\eta,\eta)|\leq \beta_2 \n{\eta}_2^2 \qquad \forall \eta \in \D_2.
	$$
	Let $D(T)$ be the domain of the common associated operator $T$; then we have
	$$
	\gamma_2\n{\xi}_2^2\leq |\pin{T\xi}{\xi}|\leq \beta_1 \n{\xi}_1^2 \qquad \forall \xi \in D(T),
	$$
	and a similar expression exchanging $\nor_1$ and $\nor_2$. Therefore, the two norms are equivalent in $D(T)$, but $D(T)$ is dense both in $\D_1[\nor_1]$ and in $\D_2[\nor_2]$. Hence, $\D_1=\D_2$ and $\O_1=\O_2$ by Lemma \ref{lem_unic_op}.
\end{proof}

By Corollary \ref{cor_coerc}, hence Theorem 11.3 of \cite{Schm} is a consequence of Theorems \ref{th_rapp_risol}, \ref{th_ris_agg} and \ref{th_compat}. Moreover, Corollaries 11.4 and 11.5 of \cite{Schm} are specific cases of Proposition \ref{pro_form_coer_1_oper} and Corollary \ref{cor_auto<->simm}, respectively.

\bigskip
{\sc Acknowledgement:} This work has been done in the framework of the project ''Problemi spettrali e di rappresentazione in quasi *-algebre di operatori'', INDAM-GNAMPA 2017.

\vspace*{0.5cm}
\textsc{Rosario Corso, Dipartimento di Matematica e Informatica, Università degli Studi di Palermo, I-90123 Palermo, Italy}

{\it E-mail address}: {\bf rosario.corso@studium.unict.it} \\

\textsc{Camillo Trapani, Dipartimento di Matematica e Informatica, Università degli Studi di Palermo, I-90123 Palermo, Italy}

{\it E-mail address}: {\bf camillo.trapani@unipa.it}


\begin{thebibliography}{100}
	\addcontentsline{toc}{chapter}{References}

	
	\bibitem{AITp}
	J-P. Antoine, A. Inoue, C. Trapani, {\it Partial *-algebras and their operator realizations}, Kluwer, Dordrecht, 2002.
	
	\bibitem{ATp}
	J-P. Antoine, C. Trapani, {\it Partial Inner Product Spaces - Theory and Applications}, Lecture Notes in Math., vol. 1986, Springer, Berlin, Heidelberg, 2009.
	
	\bibitem{Arendt}
	W. Arendt, A.F.M. ter Elst, {\it Sectorial forms and degenerate differential operators}, J. Operator Theory, 67 (2012), 33–72.
		
	\bibitem{Tp_DB}
	S. Di Bella, C. Trapani, {\it Some representation theorems for sesquilinear forms}, J. Math. Anal. Appl., 451 (2017), 64-83.
		
	\bibitem{Bognar}
	J. Bogn\'{a}r, {\it Indefinite inner product spaces}, Ergeb. Math. Grenzgeb., 78, Springer-Verlag, New York–Heidelberg, 1974.
	
	\bibitem{Fleige}
	A. Fleige, {\it Non-semibounded sesquilinear forms and left-indefinite Sturm-Liouville	problems}, Integral Equations Operator Theory, 33 (1999), 20–33.
	
	\bibitem{FHdeS}
	A. Fleige, S. Hassi, H. de Snoo, {\it A Krein space approach to representation theorems and generalized Friedrichs extensions}, Acta Sci. Math. (Szeged), 66 (2000), 633-650.
	
	\bibitem{FHdeSW'}
	A. Fleige, S. Hassi, H.S.V. de Snoo, H. Winkler, {\it Sesquilinear forms corresponding to a
	non-semibounded Sturm-Liouville operator}, Proc. Roy. Soc. Edinburgh Sect. A, 140A (2010), 291-318.
		
	\bibitem{FHdeSW}
	A. Fleige, S. Hassi, H. de Snoo, H. Winkler, {\it Non-semibounded closed symmetric forms associated with a generalized Friedrichs extension}, Proc. Roy. Soc.  Edinburgh Sect. A, Mathematics, 144(4), (2014), 731–745.
	
	\bibitem{GKMV}
	L. Grubi\u{s}i\'{c}, V. Kostrykin, K. A. Makarov, K. Veseli\'{c}, {\it Representation theorems for
	indefinite quadratic forms revisited}, Mathematika, 59(1), (2013), 169–189.
	
	\bibitem{Kato}
	T. Kato, {\it Perturbation Theory for Linear Operators}, Springer, Berlin, 1966.
	
	\bibitem{McIntosh68}
	A. McIntosh, {\it Representation of bilinear forms in Hilbert space by linear operators}, Trans. Amer. Math. Soc., 131, 2 (1968), 365-377.
	
	\bibitem{McIntosh70'}
	A. McIntosh, {\it Bilinear forms in Hilbert space}, J. Math. Mech., 19 (1970), 1027–1045.
	
	\bibitem{McIntosh70}
	A. McIntosh, {\it Hermitian bilinear forms which are not semibounded}, Bull.	Amer. Math. Soc., 76 (1970), 732–737.
	
	\bibitem{Megginson}
	R. E. Megginson, {\it An Introduction to Banach Space Theory}, Springer-Verlang, New York-Heidelberg-Berlin, 1998.

	\bibitem{Phillips}
	R. S. Phillips, {\it Dissipative operators and hyperbolic systems of partial differential equations}, Trans. Amer. Math. Soc. 90 (1959), 193-254.
	
	\bibitem{Schmitz}
	S. Schmitz, {\it Representation Theorems for Indefinite Quadratic Forms Without Spectral Gap}, Integral Equations Operator Theory, 83 (2015), 73–94.
	
	\bibitem{Schm}
	K. Schm\"{u}dgen, {\it Unbounded Self-adjoint Operators on Hilbert Space}, Springer, Dordrecht, 2012.
	
	\bibitem{Weid}
	J. Weidmann, {\it Linear Operators in Hilbert Spaces}, Springer-Verlag,  1980.
	
\end{thebibliography}
\end{document}